\documentclass[reqno,11pt]{amsart}
\usepackage[numbers]{natbib}
\usepackage[french,english]{babel}
\usepackage[utf8]{inputenc}
\usepackage[T1]{fontenc}
\usepackage{lmodern}
\usepackage{amsmath}
\usepackage{amssymb}
\usepackage{comment}
\usepackage{bm}
\usepackage{mathrsfs}
\usepackage{amsthm}
\usepackage{enumitem}
\usepackage{dsfont}

\usepackage{array}
\usepackage{tabularx}
\usepackage{booktabs}
\usepackage{ragged2e}

\usepackage{eurosym} 
\usepackage{stmaryrd} 

\usepackage{tikz}
\usetikzlibrary{arrows.meta}
\usepackage{graphicx}
\usepackage{multirow} 
\usepackage{float}    
\usepackage{adjustbox}

\usepackage{hyperref}
\hypersetup{
  colorlinks=true,
  breaklinks=true,
  urlcolor= blue,
  linkcolor= blue,
  citecolor= magenta,
  pdfstartview = FitH,
  bookmarksopen = true
}

\usepackage{listings}
\usepackage{xcolor}
\usepackage{framed}

\usepackage{fullpage} 

\numberwithin{equation}{section}
\setlist{leftmargin=2.2em}

\newcommand{\E}{\mathbb{E}}
\newcommand{\Prob}{\mathbb{P}}
\newcommand{\Q}{\mathbb{Q}}
\newcommand{\R}{\mathbb{R}}
\newcommand{\cH}{\mathcal H}
\newcommand{\dd}{\,\mathrm d}
\newcommand{\D}{\mathrm D}
\newcommand{\1}{\mathds{1}}

\newcommand{\myitem}[1]{\item\label{#1}}
\newcommand{\myref}[1]{\hyperref[#1]{\textup{\ref*{#1}}}}

\theoremstyle{plain}
\newtheorem{theorem}{Theorem}
\newtheorem{proposition}{Proposition}
\newtheorem{lemma}{Lemma}[section]

\theoremstyle{definition}
\newtheorem{assumption}{Assumption}

\newtheorem{remark}{Remark}

\newcounter{proofstep}

\DeclareMathOperator{\supp}{supp}

\title[Moments and boundaries in Volterra volatility models]
{Moments and Boundary Attainment in Volterra Volatility Models: Bergomi and Rough Heston}

\author{Arthur Bourdon}
\address{CERMICS, CNRS, ENPC, Institut Polytechnique de Paris, Marne-la-Vall\'ee, France}
\email{arthur.bourdon@enpc.fr}

\author{Thibault Jeannin}
\address{CERMICS, CNRS, ENPC, Institut Polytechnique de Paris, Marne-la-Vall\'ee, France}
\email{thibault.jeannin@enpc.fr}

\subjclass[2020]{60G22, 60G15, 60H30, 91G20}
\keywords{rough volatility, rough Bergomi model, rough Heston model, Volterra equations, moment explosion, Gaussian isoperimetry, Feller condition, boundary attainability}
\date{}

\begin{document}
\begin{abstract}
We study two probabilistic questions for stochastic Volterra equations arising in rough volatility. These equations underlie some of the most popular non-Markovian stochastic volatility models in mathematical finance. First, we establish subcritical positive moment bounds for stochastic exponentials driven by Gaussian Volterra processes. In the Gaussian Volterra--Bergomi setting, we prove that if $\rho\in[-1,0)$, then $\E[S_T^p]<\infty$ for every $0<p<p_\rho$, where $p_{-1}=\infty$ and $p_\rho=(1-\rho^2)^{-1}$ for $-1<\rho<0$. For the fractional rough Bergomi kernel, we additionally prove explosion at the critical exponent $p=p_\rho$. Combined with the known explosion above the threshold, this yields the exact criterion $\E[S_T^p]<\infty$ if and only if $0<p<p_\rho$ in the fractional rough Bergomi model. Second, for the fractional Volterra square-root process---equivalently, the rough Heston variance process---we prove that its law has a positive atom at zero at every positive time. In particular, no Feller-type condition can make the zero boundary inaccessible in the fractional rough Heston regime.
\end{abstract}
\maketitle

\section{Introduction and Main Results}

Rough volatility models were introduced to reproduce the low regularity observed in volatility time series. The empirical analysis of \citet{GatheralJaissonRosenbaum2018} indicates that log-volatility behaves, over a wide range of scales, like a fractional process with Hurst index significantly smaller than one half. This led to non-Markovian stochastic volatility models in which the volatility factor is a Volterra transform of Brownian motion. Two benchmark models are the rough Bergomi model of \citet{BayerFrizGatheral2016}, which is lognormal and particularly useful for smile modelling and simulation, and the rough Heston model of \citet{ElEuchRosenbaum2019}, which belongs to the affine Volterra class of \citet{AbiJaberLarssonPulido2019}.

\medskip

The first question considered here concerns moment finiteness in the rough Bergomi model. The stock price process is a stochastic exponential. It is therefore a non-negative local martingale and a supermartingale, but its true martingale property and its higher moments are delicate because the volatility is lognormal and non-Markovian. \citet{Gassiat2019} proved that, in the rough Bergomi model, the price is a true martingale if and only if the correlation is non-positive, and that, for $-1<\rho<0$, all moments of order $p>(1-\rho^2)^{-1}$ are infinite at every positive maturity under an Osgood growth condition, which is automatic for the lognormal response. We state the corresponding kernel-dependent criterion directly under Assumption \ref{ass:kernel} in Lemma \ref{lem:osgood-supercritical-explosion} of Appendix \ref{subsec:app-osgood-supercritical}; the discussion following the lemma explains why this is an immediate adaptation of Gassiat's Riemann--Liouville argument. We prove finiteness throughout the strict subcritical range and, for the fractional rough Bergomi kernel, explosion at the critical exponent $p=(1-\rho^2)^{-1}$. Thus the fractional model satisfies the exact criterion $\E[S_T^p]<\infty$ if and only if $0<p<(1-\rho^2)^{-1}$. In the classical Markovian lognormal stochastic-volatility setting, martingality and moment phenomena of the same nature were studied by \citet{Sin1998,Jourdain2004,LionsMusiela2007}; in particular, the threshold $(1-\rho^2)^{-1}$ already appears there. Critical moment indices also enter the model-independent wing formula of \citet{Lee2004}, which makes subcritical moment bounds relevant for implied-volatility extrapolation. Moment control is also useful for Monte Carlo central-limit-theorem estimates and for the convergence analysis of discretization schemes. \citet{GerholdPachschwoellRuf2024} recently studied integrability of the supremum of stochastic-volatility martingales, including rough Bergomi, as a way to obtain useful integrability results without relying on higher stock-price moments. The subcritical proof replaces the classical
It\^o--Lyapunov argument by a deterministic Volterra chain-rule estimate, then
transfers the resulting Cameron--Martin bound to the Brownian Volterra input by
Borell's Gaussian isoperimetric inequality. The critical explosion proof instead uses finite-dimensional Gaussian conditioning: the singular fractional diagonal creates a conditional It\^o correction of order $\Delta^{H-1/2}$, which diverges as the conditioning mesh $\Delta$ tends to zero.

\medskip

The second question concerns the zero boundary in rough Heston. In the classical CIR and Heston models, the Feller condition determines whether the square-root process remains strictly positive or can reach the degenerate boundary. In the Volterra setting, especially in the rough regime, the issue is more delicate: before the first hitting time of zero, the square-root coefficient is locally Lipschitz and pathwise arguments can be localized, but after hitting zero the diffusion coefficient degenerates and such arguments break down. Boundary attainment is therefore part of the well-posedness problem. It also matters for statistical results on Volterra--CIR models: for instance, \citet{BenAlayaFriesenKremer2024} derive maximum-likelihood estimators of the drift parameters under uniform negative-moment assumptions such as
$\sup_{t\ge0}\E[X_t^{-1-\varepsilon}]<\infty$. Such assumptions necessarily fail whenever a time marginal assigns positive mass to zero. For the fractional rough Heston variance process, the Volterra memory and the singular fractional kernel modify the boundary mechanism. The affine Volterra framework provides existence, uniqueness in law, and transform formulae; see \citet{AbiJaberLarssonPulido2019}, \citet{ElEuchRosenbaum2019}, and \citet{FriesenJin2024}. Moreover, \citet{FriesenJin2024} prove absolute continuity of the time marginals on the interior of the state space. In the one-dimensional Volterra square-root case, the law at a fixed time is therefore the sum of a possible atom at zero and an absolutely continuous component on $(0,\infty)$, but this does not determine whether the boundary atom has positive mass. The result below answers this question in the fractional rough Heston case: zero is hit with positive probability before every positive horizon.

Our result can be viewed as a first step towards extending to rough kernels the
work of \citet{BondiPulido2024}, who establish Feller-type non-exit criteria
for invariant domains in smooth-kernel Volterra equations. Although our proof
uses the affine structure of the rough Heston model, through the explicit
Laplace transform of the variance process, the heuristic rough Feller scale discussed below
suggests why boundary attainment should occur in the fractional rough regime. A related result was obtained independently and contemporaneously by \citet{FriesenGerholdWiedermann2026}, who study boundary behaviour of Volterra square-root processes. Our present proof is more direct because it is tailored to the fractional rough
Heston case, whereas their work treats more general kernels.

\subsection{Bergomi model and moment threshold}

The Gaussian Volterra--Bergomi model considered in this section is
\begin{equation}
\label{eq:model-S}
\forall t\in[0,T],\quad
\begin{cases}
S_t=S_0+\int_0^t S_s \sigma(s,Y_s)\dd W_s, \\
Y_t=y_0 + \nu\int_0^t K(t-s)\dd B_s, \\
W_t=\rho B_t+\sqrt{1-\rho^2}\,B_t^\perp,
\end{cases}
\end{equation}
where $S_0>0$, $y_0\in\R$, $\nu>0$, $\rho\in[-1,1]$, $B$ and
$B^\perp$ are independent one-dimensional Brownian motions, $K$ is a
deterministic kernel satisfying Assumption \ref{ass:kernel} below, and $T >0$ is a finite maturity. In the following, Volterra convolutions on $[0,T]$ will be denoted by
\[
\forall t\in[0,T],\quad
(f*g)(t)=\int_0^t f(t-s)g(s)\dd s,
\]
whenever the integral is well-defined. If $\mu$ is a finite measure on $[0,T]$, then
\[
\forall t\in[0,T],\quad
(\mu*f)(t)=\int_0^t f(t-s)\mu(\dd s).
\]
The model
is defined for all correlations $\rho\in[-1,1]$; the moment result below is
a negative-correlation result, stated for $\rho\in[-1,0)$. 
In particular, the rough Bergomi model with a deterministic forward variance
curve is recovered by taking a lognormal response
\[
\sigma(t,x)=m(t)e^x,
\quad
m\in C^1([0,T]),\quad \inf_{[0,T]}m>0.
\]
The analysis below also covers the Volterra Stein--Stein response
\[
\sigma(t,x)=\alpha x+\beta(t),
\quad
\alpha>0,
\quad
\beta\in C([0,T]),
\]
and the odd-polynomial, in particular quintic, Gaussian Volterra volatility
specifications
\[
\sigma(t,x)=P_5(x),
\quad
P_5(x)=a_5x^5+\sum_{j=0}^4a_jx^j,
\quad
a_5>0,
\]
used in
\citet{AbiJaber2020GaussianSV,AbiJaberIllandLi2022,AbiJaberHainautMotte2025}.
The usual Stein--Stein and quintic Ornstein--Uhlenbeck presentations may include
an additional linear Volterra mean-reversion term in the Gaussian factor:
\[
Y_t=y_0(t)+\int_0^tK(t-s)(\theta(s)-\lambda Y_s)\dd s
+\nu\int_0^tK(t-s)\dd B_s,
\quad \lambda\ge0.
\]
This term is harmless and can be absorbed into the kernel by the linear
resolvent reduction. Indeed, if $R_\lambda$ is the second-kind resolvent of
$\lambda K$ and $K_\lambda:=K-R_\lambda*K$, then
\[
Y_t
=
y_0(t)+(K*\theta)(t)
-\bigl(R_\lambda*(y_0+K*\theta)\bigr)(t)
+\nu\int_0^tK_\lambda(t-s)\dd B_s.
\]
Whenever the reduced kernel $K_\lambda$ satisfies Assumption~\ref{ass:kernel},
the moment argument applies without further change.
\medskip
\begin{assumption}\label{ass:kernel}
The kernel $K:(0,T]\to\R$ satisfies the following conditions.
\begin{enumerate}[label=(\roman*),ref=(\roman*)]
\myitem{item:kernel-l2} $K\ge0$ and $K\in L^2([0,T])$.
\myitem{item:kernel-modulus}
There exist constants $C_K<\infty$ and $\gamma_K>0$ such that
\begin{equation}\label{eq:kernel-increment-modulus}
\forall h\in[0,T],\quad
\int_0^h K(s)^2\dd s
+
\int_0^{T-h}|K(s+h)-K(s)|^2\dd s
\le
C_K h^{2\gamma_K}.
\end{equation}
\myitem{item:kernel-resolvent} There exists a finite positive measure $\mathcal R_K$ on $[0,T]$, of the form
\begin{equation}\label{eq:resolvent-form}
\mathcal R_K(\dd t)=q_0\delta_0(\dd t)+q_K(t)\dd t,
\end{equation}
where $q_0\ge0$, $q_K\in L^1([0,T])$, $q_K\ge0$, and $q_K$ is non-increasing, such that
\begin{equation}\label{eq:resolvent-identity}
\forall t\in(0,T],\quad
(\mathcal R_K*K)(t)=1.
\end{equation}
\end{enumerate}
\end{assumption}

\begin{remark}
\label{rem:kernel-modulus-role}
\begin{enumerate}
\item Assumption \ref{ass:kernel} is readily verified for the standard kernels used
below: smooth non-negative kernels with a positive first-kind resolvent, such as
the constant and positive finite multi-exponential kernels, and the Riemann--Liouville kernel.

\item Assumption \myref{item:kernel-modulus} is a simple sufficient form of
Dudley's canonical Gaussian regularity condition. Indeed, it implies Hölder
continuity of the canonical $L^2$-metric associated with the Volterra
kernel, and therefore the corresponding Dudley entropy integral is finite; see
\citet{Dudley1967}. In particular, it ensures that the Volterra Gaussian input
has continuous paths and that its Cameron--Martin space is embedded in
$C_0([0,T])$; see
\myref{item:volterra-deterministic-continuity},
\myref{item:volterra-gaussian-continuity}, and
\myref{item:volterra-cm-space} of Lemma \ref{lem:volterra-gaussian-cm}.
\end{enumerate}
\end{remark}

\begin{assumption}\label{ass:volatility}
The map $\sigma:[0,T]\times\R\to\R$ is continuous and locally Lipschitz in the space variable, uniformly in time on compact space intervals. Moreover, there exist
\[
m\in C^1([0,T];(0,\infty)),
\quad
\mathcal S\in C^2(\R;[0,\infty))
\]
such that $\mathcal S$ is convex. We write
\begin{equation}\label{eq:lyapunov-primitive}
s(x):=\mathcal S'(x),
\quad x\in\R.
\end{equation}
Then $s$ is non-decreasing, and we assume that, for some $C_s<\infty$,
\begin{equation}\label{eq:s-derivative-growth}
0\le s'(x)\le C_s(1+|s(x)|),
\quad x\in\R.
\end{equation}
We define the remainder by
\begin{equation}\label{eq:sigma-decomposition}
r_\sigma(t,x):=\sigma(t,x)-m(t)s(x),
\quad (t,x)\in[0,T]\times\R.
\end{equation}
Since $m$ is continuous and strictly positive on $[0,T]$, we set
\[
m_-:=\min_{t\in[0,T]}m(t)>0,
\quad
m_+:=\max_{t\in[0,T]}m(t)<\infty.
\]
The remainder is asymptotically negligible with respect to $s$, uniformly in time: for every $\delta>0$, there exists $C_\delta<\infty$ with
\begin{equation}\label{eq:r-negligible}
|r_\sigma(t,x)|\le \delta |s(x)|+C_\delta,
\quad (t,x)\in[0,T]\times\R.
\end{equation}
Finally, the time weight $m$ and the Lyapunov function $\mathcal S$ satisfy
\begin{equation}\label{eq:weight-lyapunov-compatibility}
\begin{aligned}
&\text{either $m$ is non-increasing,} \\
&\text{or for every $\delta>0$, there exists $\widetilde{C}_\delta<\infty$ such that}
\quad
\mathcal S(x)\le \delta s(x)^2+\widetilde{C}_\delta,
\quad x\in\R.
\end{aligned}
\end{equation}
\end{assumption}

\begin{remark}\label{rem:compact-volatility-assumption}
The two lower-order requirements \eqref{eq:r-negligible} and
\eqref{eq:weight-lyapunov-compatibility} admit the following equivalent compact
form. With $(m')_+(t):=\max\{m'(t),0\}$, for every $\delta>0$, there exists
$C_\delta<\infty$ such that
\begin{equation}\label{eq:compact-volatility-assumption}
\sup_{t\in[0,T]}
\left|\sigma(t,x)-m(t)\mathcal S'(x)\right|^2
+
\left\|(m')_+\right\|_{L^\infty(0,T)}\mathcal S(x)
\le
\delta\left|\mathcal S'(x)\right|^2+C_\delta,
\quad x\in\R.
\end{equation}
\end{remark}

We henceforth focus on the negative-correlation regime $\rho\in[-1,0)$, and define the critical exponent by
\begin{equation}\label{eq:critical-exponent}
p_\rho:=\begin{cases}
\infty, &\text{if } \rho=-1,\\[2mm]
\dfrac{1}{1-\rho^2}, &\text{if } \rho \in (-1,0).
\end{cases}
\end{equation}

The following theorem establishes the finite side of the moment threshold. Under the general kernel and volatility assumptions introduced above, every positive moment strictly below $p_\rho$ is finite. In particular, the result applies to the lognormal Volterra--Bergomi, Volterra Stein--Stein, and quintic Gaussian Volterra specifications described above.

\begin{theorem}\label{thm:bergomi-main}
Let $\rho\in[-1,0)$. Assumptions \ref{ass:kernel} and
\ref{ass:volatility} are in force. Then, for every $p\in(0,p_\rho)$, with
$p_\rho$ defined by \eqref{eq:critical-exponent},
$\E[S_T^p]<\infty.$
\end{theorem}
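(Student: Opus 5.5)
Throughout, write $\sigma_s:=\sigma(s,Y_s)$ and $\bar\rho:=\sqrt{1-\rho^2}$.

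\textbf{Step 1: condition on $B$.} Since $W=\rho B+\bar\rho B^\perp$ and $Y$ is $B$-measurable, conditionally on $B$ the stochastic integral against $B^\perp$ is Gaussian. We have
\[
S_T=S_0\exp\Bigl(\rho\int_0^T\sigma_s\dd B_s+\bar\rho\int_0^T\sigma_s\dd B^\perp_s-\tfrac12\int_0^T\sigma_s^2\dd s\Bigr).
\]
Integrating out $B^\perp$ gives
\[
\E[S_T^p]=S_0^p\,\E\Bigl[\exp\Bigl(p\rho\int_0^T\sigma_s\dd B_s-\tfrac{p-p^2\bar\rho^2}{2}\int_0^T\sigma_s^2\dd s\Bigr)\Bigr].
\]
Let $\kappa:=p(1-p\bar\rho^2)/2$. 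The condition $p<p_\rho$ is exactly $\kappa>0$, so the task is to control
\[
\E\Bigl[\exp\Bigl(p\rho\int_0^T\sigma_s\dd B_s-\kappa\int_0^T\sigma_s^2\dd s\Bigr)\Bigr].
\]
The stochastic integral is the difficult term, since $\sigma$ may grow like $e^x$.

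\textbf{Step 2: deterministic chain-rule estimate.} Write $Y=y_0+\nu\,\mathcal K\dot h$ for a smooth Cameron--Martin direction $h$, with $\mathcal K f:=K*f$. Invert the convolution using the resolvent measure $\mathcal R_K$ of Assumption~\myref{item:kernel-resolvent}. This expresses the increment of $\mathcal S(Y_t)$ through a Volterra chain rule: from $\mathcal R_K*(Y-y_0)=\nu h$, the non-increasing resolvent density should give an inequality of the form
\[
\int_0^T m(s)s(Y_s)\dd h_s\le \nu^{-1}\bigl[\text{boundary terms in }\mathcal S(Y)\bigr]+\int_0^T(m')_+\mathcal S(Y_s)\dd s+C.
\]
This is the Volterra analogue of It\^o's formula for $\mathcal S(Y)$, the convexity of $\mathcal S$ playing the role of the correction term. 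Since $\rho<0$, the endpoint contribution $\rho\,\mathcal S(Y_T)$ has the favourable sign. The time-weight term is absorbed either because $m$ is non-increasing or by \eqref{eq:weight-lyapunov-compatibility}. The remainder $r_\sigma$ is handled by \eqref{eq:r-negligible} together with Young's inequality. The resulting deterministic bound is
\[
p\rho\int_0^T\sigma(s,Y_s)\dd h_s-\kappa\int_0^T\sigma^2\dd s\le C-\tfrac{\kappa}{2}\int_0^T s(Y_s)^2\dd s .
\]
It holds on Cameron--Martin paths, or more generally as a functional inequality.

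\textbf{Step 3: transfer to Brownian paths (main obstacle).} The Brownian integral is not pathwise, so Step 2 cannot be applied directly. I would approximate $B$ by piecewise-linear or Karhunen--Lo\`eve truncations, so that the stochastic integral differs from the Stratonovich/Riemann version by a correction $-\tfrac12\int s'(Y)\,d\langle Y,B\rangle$. This correction is singular for rough kernels, which is why the direct approach fails.

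Instead I would use Borell's isoperimetric inequality as advertised. Consider the functional $F(B):=\log$ of the integrand, or a tail event $\{F>u\}$. Show that on the set $A_R:=\{\|Y\|_\infty\le R\}$, enlarged by a Cameron--Martin ball of radius $r$, the functional is controlled by Step 2. Lemma~\ref{lem:volterra-gaussian-cm} gives the continuity of $Y$ and the embedding of the Cameron--Martin space into $C_0$, both needed here. Borell's inequality then yields Gaussian tails
\[
\Prob\bigl(B\notin A_R+rU\bigr)\le \bar\Phi\bigl(\Phi^{-1}(\Prob(A_R))+r\bigr).
\]

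The expectation itself is handled by a Girsanov argument. The exponent $\exp\bigl(p\rho\int\sigma\dd B-\tfrac{p^2\rho^2}{2}\int\sigma^2\bigr)$ is a supermartingale density. Under the new measure the drift is $p\rho\sigma$, and the residual weight is
\[
\exp\Bigl(\bigl(\tfrac{p^2\rho^2}{2}-\kappa\bigr)\int\sigma^2\Bigr).
\]
Its coefficient $\tfrac{p^2\rho^2}{2}-\kappa$ may be positive, so the drifted dynamics must be shown to confine $Y$. The Cameron--Martin bound of Step 2 is used exactly for this: the drift is $p\rho\,\sigma\le0$ pushing $Y$ down through the positive kernel $K$. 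This should give $\int s(Y)^2$ exponential moments under the drifted measure via Borell.

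The delicate points are localisation (stopping when $\|Y\|$ exceeds $R$) and passing to the limit $R\to\infty$ by Fatou's lemma, with constants uniform in $R$. Carrying this out is where I expect the main difficulty.
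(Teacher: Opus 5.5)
Your Step~1 is correct, and the Girsanov reduction at the end of Step~3 is exactly the paper's Lemma~\ref{lem:girsanov}. After the change of measure the residual weight is $\exp\bigl(\frac12p(p-1)\int_0^T\sigma^2\dd t\bigr)$, and $Y$ solves the tilted equation with feedback $-\nu aK*\sigma(\cdot,Y)$, where $a=-p\rho>0$. Localisation and Fatou then work as you describe. The gap is everything after this reduction, which is the actual content of the theorem.

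\textbf{The missing estimate.} Your Step~2 bounds the wrong object. It controls the \emph{untilted} functional $p\rho\int\sigma\dd h-\kappa\int\sigma^2$ along a pure Cameron--Martin input with no feedback. As you note yourself, that bound cannot be transferred, because the It\^o integral is not a pathwise functional. For rough kernels the discrepancy is a divergent correction; this is precisely what the paper exploits for the critical explosion. What is needed after Girsanov is a bound on the continuous functional $\Psi(x)=\int_0^T\sigma(t,Y^x_t)^2\dd t$, where $Y^x=y_0+x-\nu aK*\sigma(\cdot,Y^x)$, of the form
\[
\Psi(g+\nu K*f)\le B\|f\|_{L^2([0,T])}^2+C,\quad \|g\|_\infty\le\alpha,
\]
with $B$ arbitrarily close to $a^{-2}$. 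The paper obtains it by applying the Volterra chain rule to the \emph{full} input $u=\nu(f-aq)$, with $q=\sigma(\cdot,Y)$ and the feedback included. This gives $\int qu\ge -C-\varepsilon\int q^2-\varepsilon\int u^2$. Hence $\nu a\int q^2\le\nu\int qf+\dots\le\frac{\nu a}{2}\int q^2+\frac{\nu}{2a}\int f^2+\dots$, which is the bound with $B\downarrow a^{-2}$. The coercivity comes from the quadratic term $-\nu a\int q^2$, not from a sign of the drift. Note also that $p\rho\,\sigma\le0$ is false for the Stein--Stein and quintic responses, so ``the drift pushes $Y$ down'' is not usable.

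\textbf{Why the constant must be sharp, and other missing pieces.} Borell's inequality then gives $\mu(\Psi>R)\le C_\delta e^{-(1/2-\delta)R/B}$. This yields exponential moments of every order $\Lambda<a^2/2=\frac12p^2\rho^2$, and $\frac12p(p-1)<\frac12p^2\rho^2$ is exactly $p<p_\rho$. You only say Borell ``should give'' exponential moments, without the order; any non-sharp $B$ yields a strictly smaller range of $p$.
\begin{itemize}
\item The estimate must hold for inputs $g+\nu K*f$ with $g$ in a \emph{small} sup-norm ball. The Cameron--Martin space is $\mu$-null, and Borell needs a base set of positive measure (positive here by $\supp\mu=C_0([0,T])$).
\item This requires a stability argument under small continuous shifts. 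It uses $s'\le C_s(1+|s|)$, so that $|s(x+h)|\le C(1+|s(x)|)$ for $|h|\le1$, and absorbs the shift error $C_\eta\alpha^2\int q^2$ by taking $\alpha$ small.
\item Your proposed base set $\{\|Y\|_\infty\le R\}$ is defined through the output and is large. Shifts of size $R$ produce error terms $C_R\int q^2$ that cannot be absorbed, and they destroy the constant $B$.
\item You do not address global non-explosion of the tilted equation or continuity (hence Borel measurability) of $x\mapsto Y^x$. Both are needed to define $\Psi(G)$ and to identify the tilted law; the paper derives them from the same energy estimate.
\end{itemize}
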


The following result characterizes the kernel-dependent behaviour of the asset-price moment at the critical exponent in the lognormal Volterra--Bergomi model with a deterministic forward variance curve. In particular, it identifies settings in which the critical moment is finite and others in which it is infinite.

\begin{theorem}
\label{prop:bergomi-critical-kernels}
Let $\rho\in(-1,0)$, and suppose that
\[
\sigma(t,x)=m(t)e^x,
\quad m\in C^1([0,T];(0,\infty)).
\]
Then the critical moment has the following kernel-dependent behaviour.
\begin{enumerate}[label=(\roman*),ref=(\roman*)]
\myitem{item:critical-constant-kernel}
\textbf{Constant kernel.} If $K\equiv1$, then
\[
\E\left[S_T^{p_\rho}\right]<\infty
\quad\Longleftrightarrow\quad
\frac{m'(t)}{m(t)}+\frac{\nu^2}{2}\le0
\quad\text{for every }t\in[0,T].
\]
In particular, if $m$ is constant, then
$\E[S_T^{p_\rho}]=\infty$.

\myitem{item:critical-multiexponential-kernel}
\textbf{Positive finite multi-exponential kernel.} If
\[
K(t)=\sum_{i=1}^N w_i e^{-\lambda_i t},
\quad
N\ge1,\quad w_i>0,\quad \lambda_i\ge0,
\quad \max_{1\le i\le N}\lambda_i>0,
\]
then
\[
\E\left[S_T^{p_\rho}\right]<\infty.
\]

\myitem{item:critical-rough-kernel}
\textbf{Fractional rough kernel.} If
\[
K(t)=K_H(t):=\frac{t^{H-\frac12}}{\Gamma(H+\frac12)},
\quad H\in\left(0,\frac12\right),
\]
then
\[
\E\left[S_T^{p_\rho}\right]=\infty.
\]
\end{enumerate}
Consequently, combined with Theorem \ref{thm:bergomi-main} and Lemma
\ref{lem:osgood-supercritical-explosion} of Appendix
\ref{subsec:app-osgood-supercritical}, whose Osgood condition is automatic
for the lognormal response under Assumption \ref{ass:kernel},
\[
\left\{p>0:\E[S_T^p]<\infty\right\}
=
\begin{cases}
(0,p_\rho],
& K\equiv1 \text{ and }
\frac{m'(t)}{m(t)}+\frac{\nu^2}{2}\le0
\text{ for every }t\in[0,T],\\
(0,p_\rho),
& K\equiv1 \text{ and }
\frac{m'(t)}{m(t)}+\frac{\nu^2}{2}>0
\text{ for some }t\in[0,T],\\
(0,p_\rho],
& K \text{ is as in \myref{item:critical-multiexponential-kernel}},\\
(0,p_\rho),
& K=K_H,\ H\in(0,\tfrac12).
\end{cases}
\]
\end{theorem}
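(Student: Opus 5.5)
Conditioning on $B$ removes $B^\perp$. Since $W=\rho B+\sqrt{1-\rho^2}B^\perp$ with $B^\perp$ independent of $(B,Y)$,
\[
\E\bigl[S_T^{p}\,\big|\,B\bigr]=S_0^{p}\exp\Bigl(p\rho\int_0^T\sigma_s\dd B_s-\tfrac{p}{2}\int_0^T\sigma_s^2\dd s+\tfrac{p^2(1-\rho^2)}{2}\int_0^T\sigma_s^2\dd s\Bigr),
\quad \sigma_s=m(s)e^{Y_s}.
\]
At $p=p_\rho$ we have $p^2(1-\rho^2)=p$, so the quadratic terms cancel exactly. This gives
\[
\E[S_T^{p_\rho}]=S_0^{p_\rho}\,\E\Bigl[\exp\Bigl(p_\rho\rho\int_0^T m(s)e^{Y_s}\dd B_s\Bigr)\Bigr].
\]
The whole question is therefore whether $\E[\exp(-c\,I_T)]$ is finite, where $c=p_\rho|\rho|>0$ and $I_T=\int_0^T m(s)e^{Y_s}\dd B_s$. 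The plan is to convert $I_T$ into pathwise quantities via a chain rule adapted to each kernel, and then read off finiteness or explosion.

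\textbf{Cases (i) and (ii): Itô formula.} For $K\equiv1$ we have $Y_t=y_0+\nu B_t$, so Itô's formula for $m(t)e^{Y_t}/\nu$ gives
\[
\int_0^T m e^{Y}\dd B=\frac1\nu\Bigl(m(T)e^{Y_T}-m(0)e^{y_0}-\int_0^T\Bigl(\frac{m'}{m}+\frac{\nu^2}{2}\Bigr)m\,e^{Y}\dd s\Bigr).
\]
Hence $-cI_T$ equals $-\tfrac c\nu m(T)e^{Y_T}$, which is bounded above, plus a constant, plus $\tfrac c\nu\int_0^T(m'/m+\nu^2/2)\,m\,e^{Y}\dd s$.
\begin{itemize}
\item[] \emph{Finiteness.} If $m'/m+\nu^2/2\le0$ everywhere, the exponent is bounded above and the moment is finite.
\item[] \emph{Explosion.} If $m'/m+\nu^2/2>0$ on some interval $[a,b]$ (by continuity), restrict to the event where $B$ stays large on $[a,b]$ and then returns near its starting level at $T$. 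On that event the drift term is of order $e^{\nu M}$ while $e^{Y_T}$ stays $O(1)$. The Gaussian probability of the event is of order $e^{-CM^2}$, and $\exp(c'e^{\nu M})$ beats this, so the moment is infinite.
\end{itemize}
For the multi-exponential kernel, $Y$ is the sum of Ornstein–Uhlenbeck factors $Y=y_0+\sum_i Y^i$ with $\dd Y^i=-\lambda_iY^i\dd t+\nu w_i\dd B$. Itô's formula applied to $m e^{Y}$ produces the drift $\bigl(m'/m-\sum_i\lambda_iY^i+\tfrac{\nu^2}{2}(\sum w_i)^2\bigr)me^Y$. The term $-\lambda_iY^i$ is negative and of the same size as $Y$ exactly when $Y$ is large. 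Since $\max_i\lambda_i>0$, the drift coefficient tends to $-\infty$ as $e^Y\to\infty$ along typical paths, so $xe^x$-type damping dominates. I would make this rigorous by working with the factor $Z$ having $\lambda>0$. The key estimate is that $\exp\bigl(\int(C-\lambda Z)\,me^Y\dd s\bigr)$ has finite expectation: split on $\{Z_s\le C'\}$, where $e^Y$ is controlled by the remaining Gaussian factors only through $\int e^{Y}\1_{\{Z\le C'\}}$. This sub-step is delicate; I would use a Girsanov/comparison with the Markov diffusion $(Y^i)$ and a Lyapunov function $V=e^{-\epsilon e^{Y}}$-type. I regard this as the main technical risk among the finite cases.

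\textbf{Case (iii): fractional kernel, the main obstacle.} Here no Itô formula is available, and the plan is the Gaussian conditioning announced in the introduction.
\begin{itemize}
\item[] \emph{Restriction to a small terminal window.} Fix $\Delta$ and work on $[T-\Delta,T]$, conditioning on $\mathcal F_{T-\Delta}$. On this window, $Y_s=G_s+\nu\int_{T-\Delta}^sK_H(s-u)\dd B_u$ with $G$ measurable with respect to the past.
\item[] \emph{Tilted event.} Consider the event where $B$ on the window follows the path $u\mapsto M\,(\text{profile})$, shifted by the Cameron–Martin direction $h$. Computing $\int e^{Y}\dd B$ along $\dd B=\dot h\dd u+\dd\tilde B$ gives an Itô-type correction. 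Heuristically $\E[e^{Y_s}\,\delta B_s]$ behaves like $\nu K_H(0^+)\cdot$(local quadratic variation), which is infinite; concretely, Gaussian integration by parts gives $\E\bigl[\int e^{Y}\dd B\cdot\xi\bigr]$ contributions of order $\Delta^{H-1/2}e^{Y}$.
\item[] \emph{Choice of the event.} Choose $h$ that drives $Y$ to a level $L$ on the window while $\int me^{Y}\dot h\dd s$ is negative, that is, $B$ decreasing while $Y$ is high. This is possible because $Y$ responds through the kernel to the past increments, while $\dot h$ acts on the current ones. One can take $h$ increasing first, which raises $Y$, then decreasing steeply over a mesh $\Delta$; the decrease lowers $Y$ only by $O(\Delta^{H-1/2}\cdot\text{slope}\cdot\Delta)$.
\item[] \emph{Balancing.} The payoff $\exp\bigl(c\int me^{Y}|\dot h|\bigr)\sim\exp(c\,e^{L}\cdot\text{slope}\cdot\Delta)$ is doubly exponential in $L$, against a Cameron–Martin cost $\exp(-\tfrac12\|h\|^2)$ that is polynomial in $L$ and the slope. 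Tuning $\Delta\to0$ makes the ratio blow up.
\end{itemize}
I would verify the balance carefully using the explicit singular covariance of $K_H$ and Borell-type lower tail bounds for the residual Gaussian part, since the step that matches the exponents is where the $\Delta^{H-1/2}$ divergence has to be extracted quantitatively.

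Finally, the displayed classification of $\{p>0:\E[S_T^p]<\infty\}$ follows by combining (i)–(iii) with Theorem \ref{thm:bergomi-main} for $p<p_\rho$ and with Lemma \ref{lem:osgood-supercritical-explosion} for $p>p_\rho$.
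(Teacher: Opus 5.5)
Your reduction to $\E[e^{-aM_T}]$, your treatment of (i) (with the rise and return of $B$ placed inside an interval where $m'/m+\nu^2/2>0$), and the final assembly of the classification are fine. Parts (ii) and (iii), however, have genuine gaps.

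\textbf{Part (ii).} For $N\ge2$ the drift $-\sum_i\lambda_iY^i$ is not a function of $Y$, so the Markovian It\^o route does not close. The proposed split on $\{Z_s\le C'\}$ with a Lyapunov/Girsanov comparison is not yet an argument. Bounding the drift by $C\int e^{Y}$ on a set that does not force $Y$ to stay bounded is useless, since $\E[\exp(c\int_0^Te^{\nu B_s}\dd s)]=\infty$. Only $N=1$ is immediate.

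The missing idea is to express the drift through the path of $X=Y-y_0$, using the first-kind resolvent $\mathcal R_K=k_0^{-1}\delta_0+r(t)\dd t$ with $k_0=\sum_iw_i$. For these kernels, $r$ is non-increasing and $r(T)>0$. Writing $\mathcal R_K*X=\nu B$ in differential form and comparing with your OU dynamics gives
\[
\sum_i\lambda_iY^i_t=k_0\Bigl(r(t)X_t+\int_0^t(X_t-X_{t-u})(-r'(u))\dd u\Bigr).
\]
The convexity inequality $e^x(x-y)\ge e^x-e^y$ then bounds your drift exponent $\int_0^T\bigl(\frac{m'}{m}+\frac{\nu^2k_0^2}{2}-\sum_i\lambda_iY^i_t\bigr)m(t)e^{Y_t}\dd t$ from above by $-c\int_0^T(X_t-C)e^{X_t}\dd t\le C_T$, with $c>0$. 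Hence $M_T\ge-C_T$ pathwise, and finiteness follows with no probabilistic estimate at all.

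\textbf{Part (iii).} Here the gap is more serious: the balancing you propose cannot work. As written it does not use $H<\frac12$. Run with $K\equiv1$ and $m'/m+\nu^2/2\le0$, or with $K(t)=e^{-\lambda t}$, it would prove explosion, contradicting (i) and (ii).

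The flaw is that you count only the negative part of $\int me^{Y}\dot h$ during the steep descent. You ignore the positive part accumulated while $h$ rises to push $Y$ to level $L$. In fact, for constant $m$, Lemma \ref{lem:chain-rule} with $F(x)=e^{\nu x}/\nu$ and the positive resolvent $\mathcal R_H$ gives
\[
\int_0^Te^{\nu(K_H*\dot h)(t)}\dot h(t)\dd t\ge-\nu^{-1}\mathcal R_H([0,T])\quad\text{for every }\dot h\in L^2([0,T]).
\]
So the Cameron--Martin part of $M_T$ is bounded below and can produce no doubly exponential payoff. The divergence must come entirely from the residual stochastic integral on the tube, i.e.\ from the conditional It\^o correction you mention heuristically but never quantify.

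The paper quantifies it by conditioning on all $n$ normalized block increments $Z^n$ of $B$ on $[0,T]$, not on a single terminal window. Malliavin duality and Gaussian integration by parts give
\[
\E[M_T\mid Z^n=z]=\int_0^TW_n(t,z)\bigl(h_z^n(t)-\nu\kappa_n(t)\bigr)\dd t,
\]
with $\kappa_n\ge c\Delta_n^{H-\frac12}$ on the right half of each block. Take the event $A_n$ where $h^n_{Z^n}\in[R_n,R_n+1]$ with $R_n\asymp\log n$. This is a constant positive control, with no descent at all. On $A_n$ the diagonal term dominates and $\E[M_T\mid Z^n]\le-Cn^{\frac32-H}$, while $\Prob(A_n)\ge e^{-Cn\log n}$. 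Conditional Jensen then gives $\log\E[e^{-aM_T}]\ge aCn^{\frac32-H}-Cn\log n\to\infty$.
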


\begin{remark}
\label{rem:explosion-critical-threshold}
\begin{enumerate}
\item Moment explosion above the threshold $p_\rho$ was established by
\citet{Gassiat2019} and is recalled for more general kernels in Lemma
\ref{lem:osgood-supercritical-explosion}.

\item For the quintic Gaussian Volterra volatility model with exponential
kernel
\[
K(t)=e^{-\lambda t},
\quad \lambda>0,
\]
a classical elementary It\^o argument shows that
$\E[S_T^{p_\rho}]<\infty.$

\item The equality case $p=p_\rho$ is genuinely kernel-sensitive.
Theorem \ref{prop:bergomi-critical-kernels} gives the exact comparison
for three benchmark lognormal kernel classes. The singular fractional kernel
\[
K_H(t)=\frac{t^{H-\frac12}}{\Gamma(H+\frac12)},
\quad H\in\left(0,\frac12\right),
\]
falls on the explosive side:
$\E[S_T^{p_\rho}]=\infty.$ The mechanism driving moment explosion in the rough case differs from that associated with a constant kernel. It arises from the divergent conditional Itô correction induced by the diagonal singularity $K_H(0+)=\infty.$
\end{enumerate}
\end{remark}

\begin{table}[H]
\centering
\caption{Critical and supercritical moments for the main model classes.}
\label{tab:critical-supercritical-moments}
{\footnotesize
\renewcommand{\arraystretch}{1.15}
\setlength{\tabcolsep}{3pt}
\begin{tabularx}{\textwidth}{
|>{\RaggedRight\arraybackslash}p{0.11\textwidth}
|>{\RaggedRight\arraybackslash}p{0.11\textwidth}
|>{\RaggedRight\arraybackslash}p{0.16\textwidth}
|>{\RaggedRight\arraybackslash}p{0.10\textwidth}
|>{\RaggedRight\arraybackslash}p{0.09\textwidth}
|>{\hsize=.75\hsize\linewidth=\hsize\RaggedRight\arraybackslash}X
|>{\hsize=1.25\hsize\linewidth=\hsize\RaggedRight\arraybackslash}X|}
\hline
Model
& $\sigma(t,x)$
& $m,s,\mathcal S,r_\sigma$
& Kernel
& Structural condition
& Supercritical regime $p>p_\rho$
& Critical moment $\E[S_T^{p_\rho}]$ \\
\hline

lognormal Volterra--Bergomi with forward variance
& $m(t)e^x$, $m\in C^1$, $m>0$
& $s=e^x$, $\mathcal S=e^x$, $r_\sigma=0$
& Convolution kernel $K$
& $\mathcal S\le\delta s^2+C_\delta$
& Infinite for every $p>p_\rho$; see Lemma
\ref{lem:osgood-supercritical-explosion}.
& For $K\equiv1$, finite exactly when
$m'/m+\nu^2/2\le0$ on $[0,T]$. Finite for every non-constant
positive finite multi-exponential kernel and infinite for $K=K_H$.
See Theorem \ref{prop:bergomi-critical-kernels}. \\
\hline

Volterra Stein--Stein, reduced form
& $\alpha x+\beta(t)$, $\alpha>0$, $\beta\in C([0,T])$
& $m\equiv\alpha$, $s=x$, $\mathcal S=x^2/2$, $r_\sigma=\beta$
& Convolution kernel $K$
& $m$ constant
& Not determined here; Lemma
\ref{lem:osgood-supercritical-explosion} does not apply.
& Not determined here. \\
\hline

quintic Gaussian Volterra volatility
& $P_5(x)$, leading coefficient $>0$
& $m\equiv a_5$, $s=x^5$, $\mathcal S=x^6/6$,
$r_\sigma=P_5-a_5x^5$
& Exponential kernel $K(t)=e^{-\lambda t}$, $\lambda>0$
& $m$ constant
& Infinite for every $p>p_\rho$; see Lemma
\ref{lem:osgood-supercritical-explosion}.
& Finite. \\
\hline
\end{tabularx}
}
\end{table}

\subsection{Rough Heston boundary} 
Let
\[
v_0>0,\quad b\geq 0,\quad \kappa\geq 0,\quad \nu>0.
\]
For $H\in(0,\frac12)$, we set
\[
K_H(t):=\frac{t^{H-\frac12}}{\Gamma(H+\frac12)},
\quad
\mathcal R_H(\dd t):=\frac{t^{-H-\frac12}}{\Gamma(\frac12-H)}\dd t,
\]
so that $\mathcal R_H*K_H=1$. The rough Heston variance process is the scalar Volterra CIR process
\begin{equation}\label{eq:VCIR}
V_t=v_0+\int_0^tK_H(t-s)(b-\kappa V_s)\dd s+\nu\int_0^tK_H(t-s)\sqrt{V_s}\dd W_s.
\end{equation}
The existence of a continuous non-negative weak solution and uniqueness in law are standard results for affine Volterra processes; see \citet{AbiJaberLarssonPulido2019}.

In the classical CIR model, accessibility of the zero boundary is governed by
the Feller condition. The following theorem shows that this mechanism breaks
down in the fractional rough regime: for every positive maturity, the law of
the variance process has a strictly positive atom at zero. Moreover, we obtain
an explicit lower bound for the mass of this atom.

\begin{theorem}
\label{thm:heston-main}
For every $T>0$, one has
\begin{equation}\label{eq:lower-bound-main}
\Prob(V_T=0)
\geq
\exp\left(
-v_0
\frac{4H\Gamma(\frac12-H)^2}{\nu^2\Gamma(1-2H)^2}
T^{-2H}
-
b
\frac{8H\Gamma(\frac12-H)}{\nu^2(1-2H)\Gamma(1-2H)}
T^{\frac12-H}
\right)
>0.
\end{equation}
\end{theorem}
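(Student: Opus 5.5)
The plan is to compute $\Prob(V_T=0)$ as the $u\to\infty$ limit of the Laplace transform $\E[e^{-uV_T}]$, and to bound that transform from below uniformly in $u$ through the affine Volterra Riccati representation of \citet{AbiJaberLarssonPulido2019}. Since $V_T\ge0$, dominated convergence gives $\E[e^{-uV_T}]\downarrow\Prob(V_T=0)$ as $u\to\infty$. So it suffices to show that $\E[e^{-uV_T}]$ is at least the right-hand side of \eqref{eq:lower-bound-main} for every $u>0$.

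\textbf{Step 1: affine transform for the fixed-time marginal.} With $g_0(t):=v_0+b\int_0^tK_H(s)\dd s$, I will write
\[
\E[e^{-uV_T}]=\exp\Bigl(-u\,g_0(T)+\int_0^T\bigl(-\kappa\psi_u(s)+\tfrac{\nu^2}{2}\psi_u(s)^2\bigr)g_0(T-s)\dd s\Bigr),
\]
where $\psi_u\le0$ solves the Riccati--Volterra equation
\[
\psi_u=-uK_H+K_H*\bigl(-\kappa\psi_u+\tfrac{\nu^2}{2}\psi_u^2\bigr).
\]
Convolving the Riccati equation with $\mathcal R_H$ and using $\mathcal R_H*K_H=1$ rewrites the exponent as a linear functional of $\psi_u$ alone. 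Up to harmless rearrangements, it becomes $v_0\,(\mathcal R_H*\psi_u)(T)+b\int_0^T\psi_u(s)\dd s$. I will redo this carefully, since the $v_0$ term must come out as $\mathcal R_H*\psi_u$, a fractional integral of order $\frac12-H$ of $\psi_u$.

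\textbf{Step 2: a $u$-independent lower barrier for $\psi_u$.} This is the main obstacle. The quadratic term $\frac{\nu^2}{2}\psi^2$ pushes $\psi$ upward, so $\psi_u$ should stay above a universal profile $-\Psi(t)$ with $\Psi$ blowing up at $0$ but integrable. Scaling suggests $\Psi(t)=c\,t^{-(\frac12+H)}$, because $K_H*\Psi^2\sim t^{H-\frac12}\cdot t^{1-2(\frac12+H)}=t^{-\frac12-H}$ is homogeneous of the same degree. I will choose $c$ so that $-\Psi$ is a subsolution of the Riccati equation with arbitrarily negative singular forcing, and then apply the comparison principle for Volterra Riccati equations with the completely monotone kernel $K_H$. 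The drift $-\kappa\psi\ge0$ only helps and can be dropped by comparison. The constants should come from the Beta-function identities
\[
K_H*\bigl(t^{-1-2H}\bigr)\ \text{(regularized)},\qquad \int_0^t(t-s)^{H-\frac12}s^{-1-2H}\dd s,
\]
which produce the factors $\Gamma(\frac12-H)$ and $\Gamma(1-2H)$. If the direct barrier argument is awkward, I will instead pass to the fractional form $D^{\frac12+H}\psi=\frac{\nu^2}{2}\psi^2$ with singular initial trace and use a fractional comparison lemma. An alternative is to bound $\psi_u$ below via the explicit solution of the majorant equation with $\kappa=0$.

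\textbf{Step 3: plug in and evaluate.} Inserting $\psi_u\ge-\Psi$ into the Step 1 exponent gives two terms. The first is $v_0(\mathcal R_H*\Psi)(T)$, which is proportional to $T^{-2H}$ and should give the factor $\frac{4H\Gamma(\frac12-H)^2}{\nu^2\Gamma(1-2H)^2}$. The second is $b\int_0^T\Psi$, or $b\,(1*\Psi)(T)$ after the rearrangement in Step 1; it carries the $T^{\frac12-H}$ power and the factor $\frac{8H\Gamma(\frac12-H)}{\nu^2(1-2H)\Gamma(1-2H)}$. I will fix the exact exponent of the barrier by matching these two powers of $T$, so the Step 2 guess may need adjusting. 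Both terms are finite and independent of $u$. Letting $u\to\infty$ then yields \eqref{eq:lower-bound-main}.
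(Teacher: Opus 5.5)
Your architecture matches the paper's: a Laplace transform through the affine Riccati--Volterra equation, then rewriting the exponent as $v_0(\mathcal R_H*\psi_u)(T)+b\int_0^T\psi_u(s)\dd s$ (your Step 1 is correct). After that come a $u$-uniform barrier $\psi_u(t)\ge -c\,t^{-H-\frac12}$, explicit integrals, and the limit $u\to\infty$. The exponent $-H-\frac12$ is already right, since it produces $T^{-2H}$ and $T^{\frac12-H}$, so nothing needs adjusting. However, Step 2 carries the whole proof, and the route you put first cannot work. With $\Psi=c\,t^{-H-\frac12}$ one has $\Psi^2=c^2t^{-1-2H}\notin L^1$ near $0$. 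So the integral you quote, $\int_0^t(t-s)^{H-\frac12}s^{-1-2H}\dd s$, diverges, and $K_H*\Psi^2\equiv+\infty$ on $(0,\infty)$. The integral-form subsolution inequality for $-\Psi$ then holds trivially for every $c>0$. Any comparison principle accepting it would give $\psi_u\ge-c\,t^{-H-\frac12}$ for all $c>0$, i.e.\ $\psi_u\equiv0$, which does not solve the Riccati equation. Your homogeneity count only becomes meaningful once the singular source $-uK_H$ has been removed.

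What works is your fallback, and it is exactly what the paper does. Write $y_u:=-\psi_u$. Applying $\mathcal R_H$ to the Riccati equation and differentiating gives $\D^{H+\frac12}y_u=-\kappa y_u-\frac{\nu^2}{2}y_u^2$ on $(0,\infty)$, with $y_u\in C^1((0,\infty))$ by Abel regularization. The Beta identity you need is the convergent one, $\int_0^t(t-s)^{-H-\frac12}s^{-H-\frac12}\dd s=\frac{\Gamma(\frac12-H)^2}{\Gamma(1-2H)}t^{-2H}$. It yields $\D^{H+\frac12}(At^{-H-\frac12})=-\frac{2H\Gamma(\frac12-H)}{\Gamma(1-2H)}At^{-2H-1}$. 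Hence $At^{-H-\frac12}$ is a strict supersolution for every $A>A_*:=\frac{4H\Gamma(\frac12-H)}{\nu^2\Gamma(1-2H)}$, and $A_*t^{-H-\frac12}$ is the explicit $\kappa=0$ solution you allude to.

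The comparison itself still has to be proved. Your ``singular initial trace'' is made precise by $0\le y_u\le uK_H=o(t^{-H-\frac12})$, so $y_u$ starts strictly below the barrier. Let $t_*$ be a first touching time and set $w:=y_u-At^{-H-\frac12}$, which is negative on $(0,t_*)$. The endpoint extremum principle on $[\varepsilon,t_*]$ gives $\D_\varepsilon^{H+\frac12}w(t_*)\ge0$. The history term $-\frac{H+\frac12}{\Gamma(\frac12-H)}\int_0^\varepsilon(t_*-s)^{-H-\frac32}w(s)\dd s$ is nonnegative, so $\D^{H+\frac12}w(t_*)\ge0$. This contradicts the strict supersolution inequality, because the nonlinear terms coincide at $t_*$. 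No monotonicity of the nonlinearity is needed, which matters since $x\mapsto-\kappa x+\frac{\nu^2}{2}x^2$ is decreasing on $(-\infty,0]$. Finally, let $A\downarrow A_*$ to recover the exact constants in the statement.
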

In particular, since $V$ has continuous non-negative paths,
\[
    \Prob\left(\inf_{0\leq t\leq T}V_t=0\right)
    \geq \Prob(V_T=0)>0.
\]
Hence, the zero boundary is attained by every positive horizon with positive
probability, regardless of the values of $b$ and $\kappa$. Consequently, no
analogue of the classical Feller condition can make the zero boundary
inaccessible in the fractional rough Heston regime.

\section{Control of Volterra--Bergomi moments: Proof of Theorem \ref{thm:bergomi-main}}
\label{sec:thm1}

For $p>1$ and $\rho<0$, we first introduce the tilted stochastic Volterra
equation
\begin{equation}\label{eq:tilted-feedback}
\forall t\in[0,T],\quad
\widetilde Y_t
=
y_0+\nu\int_0^tK(t-s)\dd B_s
+
\nu p\rho\int_0^tK(t-s)\sigma(s,\widetilde Y_s)\dd s .
\end{equation}
Local existence
and pathwise uniqueness for \eqref{eq:tilted-feedback} follow from the standard
local theory for nonlinear Volterra equations with $L^1$-kernels and locally
Lipschitz nonlinearities; see \citet[Chapter 12]{GripenbergLondenStaffans1990},
and also \citet[Appendix B]{AbiJaberLarssonPulido2019} for the same convolution
setting. The global existence result needed below is established in
\myref{item:feedback-well-posedness} of Lemma \ref{lem:well-posed-general}.
The proof of Theorem \ref{thm:bergomi-main} relies on the following two
auxiliary results, whose proofs are given after the proof of Theorem
\ref{thm:bergomi-main}.

\medskip


\begin{lemma}\label{lem:girsanov}
Under Assumptions \ref{ass:kernel} and \ref{ass:volatility}, for $p>1$ and $\rho<0$, if the
solution $\widetilde Y$ of \eqref{eq:tilted-feedback} satisfies
\begin{equation}\label{eq:needed-integrability}
\E\left[
\exp\left(\frac12p(p-1)\int_0^T \sigma(t,\widetilde Y_t)^2\dd t\right)
\right]<\infty,
\end{equation}
then $\E[S_T^p]<\infty$.
\end{lemma}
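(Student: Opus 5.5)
We prove Lemma \ref{lem:girsanov} by a Girsanov change of measure that absorbs the $p$-th power of the stochastic exponential. We assume without loss of generality that $S_0=1$, and we write
\[
S_T=\exp\Bigl(\int_0^T\sigma_t\dd W_t-\tfrac12\int_0^T\sigma_t^2\dd t\Bigr),
\quad \sigma_t:=\sigma(t,Y_t).
\]
This representation is valid because $S$ is the stochastic exponential and $\sigma(\cdot,Y_\cdot)$ is continuous, so $\int_0^T\sigma_t^2\dd t<\infty$ almost surely. Then
\[
S_T^p=\mathcal E\Bigl(p\int_0^\cdot\sigma_t\dd W_t\Bigr)_T\exp\Bigl(\tfrac12p(p-1)\int_0^T\sigma_t^2\dd t\Bigr)=:Z_T\,\Phi(Y),
\]
where $Z$ is a positive local martingale, hence a supermartingale.

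\textbf{Step 1: localization.} Set $\tau_n:=\inf\{t:\int_0^t\sigma_s^2\dd s\ge n\}\wedge T$. Novikov's criterion makes $Z^{\tau_n}$ a true martingale. Define $\Q_n$ by $\dd\Q_n=Z_{\tau_n}\dd\Prob$. Under $\Q_n$, Girsanov's theorem shows that
\[
\widetilde B_t:=B_t-p\rho\int_0^{t\wedge\tau_n}\sigma_s\dd s
\]
is a Brownian motion, since $\dd\langle W,B\rangle=\rho\dd t$. Consequently, on $[0,\tau_n]$ the process $Y$ satisfies
\[
Y_t=y_0+\nu\int_0^tK(t-s)\dd\widetilde B_s+\nu p\rho\int_0^tK(t-s)\sigma(s,Y_s)\dd s.
\]
This is exactly the tilted equation \eqref{eq:tilted-feedback} driven by $\widetilde B$, and it holds on $[0,\tau_n]$ because the Volterra integrals up to time $t$ only involve the past.

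\textbf{Step 2: identification of the law.} Pathwise uniqueness for \eqref{eq:tilted-feedback}, together with the global existence statement in \myref{item:feedback-well-posedness} of Lemma \ref{lem:well-posed-general}, gives the following identification. The $\Q_n$-law of $(Y_t)_{t\le\tau_n}$ coincides with the $\Prob$-law of $(\widetilde Y_t)_{t\le\widetilde\tau_n}$, where $\widetilde\tau_n$ is the analogous stopping time built from $\widetilde Y$. The argument is of Yamada--Watanabe type: the strong solution is a measurable functional of the driving Brownian motion, and the stopped process is determined by that functional up to the stopping time. Verifying this carefully for stopped Volterra equations is the main technical point. The memory of the kernel means we should compare the unique strong solution $\widetilde Y=F(\widetilde B)$ with $Y$ on $[0,\tau_n]$ using pathwise uniqueness up to a stopping time. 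This is available from the local theory cited, since the nonlinearity is locally Lipschitz.

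\textbf{Step 3: monotone convergence.} From Steps 1 and 2 we obtain
\[
\E\bigl[S_{\tau_n}^p\bigr]=\E_{\Q_n}\Bigl[\exp\Bigl(\tfrac12p(p-1)\int_0^{\tau_n}\sigma_t^2\dd t\Bigr)\Bigr]
=\E\Bigl[\exp\Bigl(\tfrac12p(p-1)\int_0^{\widetilde\tau_n}\sigma(t,\widetilde Y_t)^2\dd t\Bigr)\Bigr].
\]
The right-hand side is bounded by the finite quantity in \eqref{eq:needed-integrability}, uniformly in $n$. Since $\int_0^T\sigma_t^2\dd t<\infty$ almost surely, we have $\tau_n=T$ eventually, so $S_{\tau_n}\to S_T$ almost surely. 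Fatou's lemma then yields
\[
\E[S_T^p]\le\liminf_n\E\bigl[S_{\tau_n}^p\bigr]<\infty,
\]
which proves the lemma. Note that Step 3 uses only the supermartingale and local-martingale structure, so no true-martingale property of $S$ is needed.
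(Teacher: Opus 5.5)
Your proof is correct and follows the paper's argument: a localized Girsanov change of measure rewrites $\E[S_{\tau}^p]$ as $\E_{\Q}\bigl[\exp\bigl(\frac12p(p-1)\int_0^{\tau}\sigma(t,Y_t)^2\dd t\bigr)\bigr]$, the stopped tilted law is identified with that of the stopped $\widetilde Y$ by pathwise uniqueness (which here amounts to applying the deterministic solution map to the additive Gaussian input), and Fatou's lemma concludes. The differences are cosmetic and both variants give the same identity: you localize by $\int_0^t\sigma_s^2\dd s\ge n$ rather than by $|Y_t|\ge R$, and you tilt $W$ directly rather than first conditioning on $B$ and tilting only $B$.
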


\begin{proposition}\label{prop:negative-feedback}
Under Assumptions \ref{ass:kernel} and \ref{ass:volatility}, for $a>0$, the solution of
\begin{equation}\label{eq:negative-feedback}
\forall t\in[0,T],\quad
Y_t
=
y_0+\nu\int_0^tK(t-s)\dd B_s
-
\nu a\int_0^tK(t-s)\sigma(s,Y_s)\dd s
\end{equation}
satisfies
\begin{equation}\label{eq:exp-integrability}
\forall \Lambda \in [0,a^2/2),
\quad
\E\left[
\exp\left(\Lambda\int_0^T \sigma(t,Y_t)^2\dd t\right)
\right]
<\infty.
\end{equation}
\end{proposition}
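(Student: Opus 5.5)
The plan is to deduce \eqref{eq:exp-integrability} from a purely deterministic a priori estimate along Cameron--Martin shifts, and then to transfer it to the Brownian input by Borell's Gaussian isoperimetric inequality.

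Write $Z_t:=y_0+\nu\int_0^tK(t-s)\dd B_s$. For a path $z$ and $h\in H^1_0([0,T])$, let $y=y^{z,h}$ denote the solution of
\[
y=z+\nu K*\dot h-\nu a\,K*\sigma(\cdot,y).
\]
By Lemma \ref{lem:volterra-gaussian-cm}, shifting $B$ by $h$ replaces $Z$ by $Z+\nu K*\dot h$. Global well-posedness is taken from Lemma \ref{lem:well-posed-general}.

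The deterministic core is the following claim. For every $\delta>0$ there exist $\eta>0$ and $C<\infty$ such that, whenever $\|z-y_0\|_\infty\le\eta$,
\[
\Bigl(\int_0^T\sigma(t,y_t)^2\dd t\Bigr)^{1/2}\le\frac{1+\delta}{a}\,\|\dot h\|_{L^2}+C .
\]
To prove it, set $w=y-z$. Convolving the equation with the resolvent $\mathcal R_K$ of Assumption \myref{item:kernel-resolvent} and using $\mathcal R_K*K=1$ gives
\[
\mathcal R_K*w=\nu\int_0^\cdot\bigl(\dot h-a\sigma(\cdot,z+w)\bigr)\dd t,
\]
so $\mathcal R_K*w$ is absolutely continuous. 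I would then multiply by $m(t)s(w_t)$ and integrate. The convex Volterra chain-rule inequality for a kernel $q_0\delta_0+q_K$ with $q_K\ge0$ non-increasing is
\[
s(w)\,\tfrac{\dd}{\dd t}(\mathcal R_K*w)\ \ge\ \tfrac{\dd}{\dd t}\bigl(\mathcal R_K*\mathcal S(w)\bigr)-\mathcal S(0)\,q_K .
\]
Combined with an integration by parts against $m$, this bounds from below $\int_0^T m\,s(w)(\dot h-a\sigma)\dd t$ by
\[
-C-\int_0^T (m')_+\,\mathcal R_K*\mathcal S(w)\dd t .
\]
The last term is absorbed through \eqref{eq:weight-lyapunov-compatibility}: it vanishes if $m$ is non-increasing, and otherwise it is at most $\delta\int s(w)^2$ up to constants. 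Since $|z|\le|y_0|+\eta$ and $s'\le C_s(1+|s|)$, Gronwall in the space variable gives $|s(z+w)-s(w)|\le(e^{C_s\eta}-1)(1+|s(w)|)$. Together with \eqref{eq:r-negligible}, this yields
\[
\sigma(\cdot,y)=m\,s(w)\bigl(1+O(\eta+\delta)\bigr)+O(1).
\]
Hence $a\|m\,s(w)\|_{L^2}^2\le(1+O(\eta+\delta))\|\dot h\|_{L^2}\|m\,s(w)\|_{L^2}+C(1+\|m\,s(w)\|_{L^2})$, which gives the claim.

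For the Gaussian transfer, fix $\Lambda<a^2/2$ and choose $\delta$ so that $\Lambda(1+\delta)^2<a^2/2$. Let
\[
A:=\{\|Z-y_0\|_\infty\le\eta\}.
\]
This event has positive probability: $Z-y_0$ is a centred continuous Gaussian process whose support contains $0$, by Lemma \ref{lem:volterra-gaussian-cm} and Remark \ref{rem:kernel-modulus-role}. On the enlargement $A+r\,\mathcal B_{H}$ (unit Cameron--Martin ball $\mathcal B_H$), the claim gives
\[
\|\sigma(\cdot,Y)\|_{L^2}\le\frac{(1+\delta)r}{a}+C .
\]
Borell's inequality yields
\[
\Prob\bigl(B\notin A+r\mathcal B_H\bigr)\le 1-\Phi\bigl(\Phi^{-1}(\Prob(A))+r\bigr)\le e^{-(r+\Phi^{-1}(\Prob(A)))^2/2}.
\]
Hence $\Prob\bigl(\|\sigma(\cdot,Y)\|_{L^2}>C+(1+\delta)r/a\bigr)$ has Gaussian tail with variance parameter $(1+\delta)^2/a^2$. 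Integrating this tail gives \eqref{eq:exp-integrability} for every $\Lambda<a^2/2$.

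I expect the main obstacle to be the deterministic step, specifically justifying the Volterra chain rule rigorously. The path $w$ is only continuous, and $\mathcal R_K*w$ rather than $w$ is differentiable, so I would first prove the inequality for smoothed data and then pass to the limit using continuity of the solution map. The secondary difficulty is keeping every loss multiplicative of the form $1+O(\eta+\delta)$, so that the sharp constant $a^2/2$ survives; this is why the noise is localized to a small sup-norm ball rather than bounded crudely.
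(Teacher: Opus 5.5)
Your strategy is the paper's: a deterministic coercivity bound along Cameron--Martin shifts of a small sup-norm ball, obtained from the Volterra convexity chain rule (Lemmas \ref{lem:chain-rule} and \ref{lem:volterra-energy-small-shift}, item \myref{item:feedback-coercivity} of Lemma \ref{lem:well-posed-general}), followed by Borell's inequality (Lemma \ref{lem:borell}). Your Cauchy--Schwarz form $\|\sigma(\cdot,y)\|_{L^2}\le\frac{1+\delta}{a}\|\dot h\|_{L^2}+C$ is equivalent to the paper's $\Psi(g+\nu K*f)\le B\|f\|_{L^2}^2+C$ with $B>a^{-2}$. The Gaussian transfer and the tail integration are correct.

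\textbf{Centring error.} One step fails as written. You test with $m\,s(w)$ and apply the chain rule to $\mathcal S(w)$, so the Lyapunov function is centred at $0$. But $y=z+w$ with $z$ close to $y_0$, not to $0$. Gr\"onwall in the space variable then only gives
\[
|s(z+w)-s(w)|\le\bigl(e^{C_s(|y_0|+\eta)}-1\bigr)\bigl(1+|s(w)|\bigr).
\]
This factor is not small unless $y_0=0$. Hence the relation $\sigma(\cdot,y)=m\,s(w)(1+O(\eta+\delta))+O(1)$ fails for a general $s$ allowed by Assumption \ref{ass:volatility}. You get only a bounded comparison, not a $1+o(1)$ one, and the argument no longer yields the constant $(1+\delta)/a$ that the sharp threshold $a^2/2$ requires.

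The fix is to apply the chain rule to the convex function $x\mapsto\mathcal S(y_0+x)$ and test with $m\,s(y_0+w)$. Then $|s(z+w)-s(y_0+w)|\le(e^{C_s\eta}-1)(1+|s(y_0+w)|)$, and your multiplicative bookkeeping goes through. This is exactly the paper's choice $y^0=y_0+K*u$ in Step 1 of Lemma \ref{lem:volterra-energy-small-shift}.

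\textbf{Chain-rule justification.} The route you sketch (smooth the data, then pass to the limit via continuity of the solution map) is awkward. Smoothing the inputs does not make $w=K*u$ smooth, because $K$ is singular and $u=\nu(\dot h-a\sigma(\cdot,y))$ contains $\sigma(\cdot,y)$. The paper avoids the solution map entirely. It proves the inequality for every $X=K*h$ with $h\in L^2([0,T])$ by mollifying $X$ in time, using $\frac{\dd}{\dd t}\bigl(\mathcal R_K*(\zeta_\varepsilon*X)\bigr)=\zeta_\varepsilon*h$. This applies directly to your $w$, since $u\in L^2$.
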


\begin{remark}\label{rem:smooth-kernel-ito}
The main technical difficulty comes from the lack of smoothness of the kernel.
Let us explain what would happen in the smoother semimartingale regime. For this
remark only, suppose that $K\in C^1([0,T])$ and that $K_0:=K(0)>0$. Then
the Volterra process is a semimartingale. More precisely, if we write
$q(t):=\sigma(t,Y_t)$, then
\[
\dd Y_t
=
\nu K_0\dd B_t
-\nu aK_0q(t)\dd t
+\beta_t\dd t,
\]
where
\[
\beta_t
=
\nu\int_0^tK'(t-s)\dd B_s
-
\nu a\int_0^tK'(t-s)q(s)\dd s .
\]
Thus the negative feedback produces the instantaneous drift
$-\nu aK_0\sigma(t,Y_t)\dd t.$
Applying It\^o's formula to the Lyapunov function $\mathcal S$, with
$\mathcal S'=s$, gives, after localization,
\[
\dd\mathcal S(Y_t)
=
\nu K_0s(Y_t)\dd B_t
-\nu aK_0s(Y_t)\sigma(t,Y_t)\dd t
+s(Y_t)\beta_t\dd t
+\frac12\nu^2K_0^2s'(Y_t)\dd t .
\]
Since
\[
\sigma(t,x)=m(t)s(x)+r_\sigma(t,x),
\quad
m(t)\ge m_->0,
\]
the leading drift term satisfies, up to lower-order contributions,
\[
-\nu aK_0s(Y_t)\sigma(t,Y_t)
=
-\nu aK_0m(t)s(Y_t)^2
-\nu aK_0s(Y_t)r_\sigma(t,Y_t).
\]
The first term is coercive. The second term is absorbed by the negligibility
assumption on $r_\sigma$. The Gaussian and feedback components of $\beta_t$
satisfy, by the boundedness of $K'$ on $[0,T]$,
\[
\E\left[
\int_0^t
\left|
\nu\int_0^sK'(s-u)\dd B_u
\right|^2
\dd s
\right]
\le C_T.
\]
Moreover,
\[
\E\left[
\int_0^t
\left|
\nu a\int_0^sK'(s-u)q(u)\dd u
\right|^2
\dd s
\right]
\le
C_T\int_0^t
\E\left[\int_0^s q(u)^2\dd u\right]\dd s.
\]
Young's inequality then allows the $s(Y_s)^2$-part to be absorbed by the
coercive drift. This yields an estimate of
the form
\[
\E[\mathcal S(Y_t)]
+
c\E\left[\int_0^t\sigma(s,Y_s)^2\dd s\right]
\le
C
+
C\int_0^t
\E\left[\int_0^s\sigma(u,Y_u)^2\dd u\right]\dd s .
\]
Grönwall's lemma then gives
\[
\E\left[\int_0^T\sigma(t,Y_t)^2\dd t\right]<\infty .
\]
This is the type of It\^o--Lyapunov argument used in the Markovian lognormal
semimartingale setting, for instance in \citet{Jourdain2004}. Written in
Volterra form, this corresponds to the exponential-kernel case
$K(t)=e^{-\lambda t}$. For rough Volterra kernels, the process $Y$ is no
longer a semimartingale and the above It\^o argument is not available. The proof below replaces the instantaneous It\^o coercivity by a
deterministic Volterra chain-rule inequality, and then transfers the resulting
Cameron--Martin estimate to the Gaussian input through Borell's isoperimetric
inequality.
\end{remark}

We now provide the proof of Theorem \ref{thm:bergomi-main}.

\begin{proof}[Proof of Theorem \ref{thm:bergomi-main}]
For $p\in (0,1]$, the non-negative local martingale $S$ is a supermartingale,
and $x^p\le1+x$ on $[0,\infty)$. Thus
$\E[S_T^p]\le1+S_0<\infty.$
It remains to consider $p>1$.
Since $\rho<0$, one has $-p\rho>0$. The
condition $p(1-\rho^2)<1$ is equivalent to
$\frac12p(p-1)<\frac12p^2\rho^2.$
Proposition \ref{prop:negative-feedback}, applied with
$a=-p\rho$ and $\Lambda=\frac12p(p-1)$, gives
\eqref{eq:needed-integrability}. Lemma \ref{lem:girsanov} then yields
$\E[S_T^p]<\infty$.
\end{proof}

We now provide the proof of Lemma \ref{lem:girsanov}.

\begin{proof}[Proof of Lemma \ref{lem:girsanov}]
For $R>0$, we set
$\tau_R:=\inf\{t\in[0,T]:|Y_t|\ge R\}\wedge T.$ On $[0,\tau_R]$, the process $t\mapsto\sigma(t,Y_t)$ is bounded. Since
$W=\rho B+\sqrt{1-\rho^2}B^\perp$, conditioning on $B$ and using the
independence of $B^\perp$ gives
\[
\E[S_{\tau_R}^p]
=
S_0^p\E\left[
\exp\left(
 p\rho\int_0^{\tau_R} \sigma(t,Y_t)\dd B_t
+
\left(\frac12p^2(1-\rho^2)-\frac12p\right)
\int_0^{\tau_R}\sigma(t,Y_t)^2\dd t
\right)
\right].
\]
By Novikov's criterion, the stochastic exponential $\mathcal E^{p,R}$ defined by
\[ 
\forall t \in [0,T],\quad 
\mathcal E_t^{p,R}
=
\exp\left(
 p\rho\int_0^{\tau_R \wedge t} \sigma(s,Y_s)\dd B_s
-
\frac12p^2\rho^2\int_0^{\tau_R \wedge t} \sigma(s,Y_s)^2\dd s\right), 
\]
is a true martingale. Therefore, under the probability $\Q^{p,R}$ defined by
$\frac{\dd\Q^{p,R}}{\dd\Prob}=\mathcal E_T^{p,R}$, the process
\[
\left(B_t^{p,R}
:=
B_t-p\rho\int_0^{t\wedge\tau_R}\sigma(s,Y_s)\dd s\right)_{t\in[0,T]},
\]
is a Brownian motion under $\Q^{p,R}$ by Girsanov's theorem. Combining the
preceding identity with the
change of measure gives
\begin{equation}\label{eq:stopped-moment-identity}
\E[S_{\tau_R}^p]
=
S_0^p\E^{\Q^{p,R}}
\left[
\exp\left(\frac12p(p-1)\int_0^{\tau_R} \sigma(t,Y_t)^2\dd t\right)
\right].
\end{equation}
Under $\Q^{p,R}$, the volatility factor satisfies
\begin{equation}\label{eq:stopped-feedback}
\forall t\in[0,T],\quad
Y_t
=
y_0+\nu\int_0^tK(t-s)\dd B_s^{p,R}
+
\nu p\rho\int_0^tK(t-s)\sigma(s,Y_s)\1_{\{s\le\tau_R\}}\dd s.
\end{equation}
This is the stopped version of the tilted equation
\eqref{eq:tilted-feedback}. We also set $\widetilde\tau_R:=\inf\{t\in[0,T]:|\widetilde Y_t|\ge R\}\wedge T$. By pathwise uniqueness for the stopped Volterra equation, the stopped path
$(Y_{t\wedge\tau_R})_{t\in[0,T]}$ under $\Q^{p,R}$ has the same law as
$(\widetilde Y_{t\wedge\widetilde\tau_R})_{t\in[0,T]}$. It follows that
\[
\E[S_{\tau_R}^p]
=
S_0^p\E
\left[
\exp\left(
\frac12p(p-1)\int_0^{\widetilde\tau_R}\sigma(t,\widetilde Y_t)^2\dd t
\right)
\right].
\]
Condition \eqref{eq:needed-integrability} yields a constant
$C<\infty$, independent of $R$, such that
$\E[S_{\tau_R}^p]\le C$. Since $\tau_R\uparrow T$ almost surely and
$S$ has continuous paths, Fatou's lemma gives
\[
\E[S_T^p]
\le
\liminf_{R\to\infty}\E[S_{\tau_R}^p]
\le
C.
\]
\end{proof}

\subsection{Proof of Proposition \ref{prop:negative-feedback}}

The proof of Proposition \ref{prop:negative-feedback} relies on the following lemmas. Their proofs are postponed in Appendix \ref{subsec:app-volterra-gaussian-cm}.

\begin{lemma}
\label{lem:volterra-gaussian-cm}
Under Assumption \ref{ass:kernel}, the following facts hold.
\begin{enumerate}[label=(\roman*),ref=(\roman*)]
\myitem{item:volterra-deterministic-continuity}
For every $f\in L^2([0,T])$, the convolution $K*f$ has a continuous
representative on $[0,T]$, vanishing at zero.

\myitem{item:volterra-gaussian-continuity}
The Gaussian Volterra convolution $G:=\nu K * \dd B$ defined as
\begin{equation}\label{eq:G-def}
\forall t\in[0,T],\quad
G_t=\nu\int_0^t K(t-s)\dd B_s
\end{equation}
has a continuous version on $[0,T]$, with $G_0=0$.

\myitem{item:volterra-cm-space}
Let $\mu$ be the law of $G$ on $C([0,T])$. The Cameron--Martin space of the Gaussian measure $\mu$, denoted by $\cH_G$, is
\begin{equation}\label{eq:cm-space}
\cH_G
=
\{\nu K*f:f\in L^2([0,T])\},
\quad
\|\nu K*f\|_{\cH_G}
=
\|f\|_{L^2([0,T])}.
\end{equation}
Moreover,
\begin{equation}\label{eq:support-C0}
\supp\mu=C_0([0,T]),
\quad
C_0([0,T]):=\{x\in C([0,T]):x_0=0\}.
\end{equation}
\end{enumerate}
\end{lemma}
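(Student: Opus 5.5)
We prove the three items of Lemma \ref{lem:volterra-gaussian-cm} in order. Each one reduces to the increment modulus \eqref{eq:kernel-increment-modulus} together with standard Gaussian-measure theory. The support statement in \myref{item:volterra-cm-space} is the only part that uses the resolvent structure \myref{item:kernel-resolvent}.

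For \myref{item:volterra-deterministic-continuity}, fix $f\in L^2([0,T])$ and $0\le t<t+h\le T$. We write
\[
(K*f)(t+h)-(K*f)(t)=\int_t^{t+h}K(t+h-u)f(u)\dd u+\int_0^t\bigl(K(t+h-u)-K(t-u)\bigr)f(u)\dd u .
\]
Applying Cauchy--Schwarz to each term and using \eqref{eq:kernel-increment-modulus} gives
\[
|(K*f)(t+h)-(K*f)(t)|\le 2C_K^{1/2}h^{\gamma_K}\|f\|_{L^2}.
\]
Hence $K*f$ is $\gamma_K$-Hölder, and $(K*f)(0)=0$.

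For \myref{item:volterra-gaussian-continuity}, the same computation carried out with the Itô isometry gives
\[
\E|G_{t+h}-G_t|^2\le 2\nu^2C_Kh^{2\gamma_K}.
\]
Since $G$ is Gaussian, all moments satisfy $\E|G_{t+h}-G_t|^{2n}\le C_nh^{2n\gamma_K}$. Kolmogorov's continuity criterion, with $n$ chosen so that $2n\gamma_K>1$, then yields a continuous version with $G_0=0$. Alternatively, Dudley's entropy bound applies, as noted in Remark \ref{rem:kernel-modulus-role}.

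For the Cameron--Martin identification in \myref{item:volterra-cm-space}, we use the standard representation. The map $I:L^2([0,T])\to C([0,T])$, $f\mapsto\nu K*f$, is bounded by \myref{item:volterra-deterministic-continuity}, and $G=I(\dot B)$ formally. More precisely, the reproducing kernel Hilbert space of a centered Gaussian process with covariance
\[
\E[G_tG_s]=\nu^2\int_0^{t\wedge s}K(t-u)K(s-u)\dd u=\langle k_t,k_s\rangle_{L^2},\qquad k_t:=\nu K(t-\cdot)\1_{[0,t]},
\]
is the set of functions $t\mapsto\langle k_t,f\rangle$, $f\in L^2$, equipped with the quotient norm $\inf\{\|f\|:\ \text{same image}\}$. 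These functions are exactly $\nu K*f$. To obtain the exact isometry $\|\nu K*f\|_{\cH_G}=\|f\|_{L^2}$, we need injectivity of $f\mapsto K*f$ on $L^2$. This follows from \eqref{eq:resolvent-identity}: if $K*f=0$, then $\mathcal R_K*K*f=1*f=0$, so $\int_0^tf=0$ for every $t$, hence $f=0$.

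For the support statement, $\supp\mu$ is the closure of $\cH_G$ in $C([0,T])$ (a general fact for Gaussian measures on separable Banach spaces). It therefore suffices to show that $\{K*f\}$ is dense in $C_0([0,T])$, and this is the main obstacle. Our plan is as follows. The set $\{1*g: g\in C^1,\ g(0)=0\}$, that is, $C^2$ functions $x$ with $x(0)=x'(0)=0$, is dense in $C_0$. Given such a target $x=1*g$, we use the resolvent identity to write
\[
1*g=(K*\mathcal R_K)*g=K*(\mathcal R_K*g),\qquad \mathcal R_K*g=q_0g+q_K*g .
\]
We must check that $\mathcal R_K*g\in L^2$. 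Since $q_K\in L^1$ and $g$ is bounded, $q_K*g$ is bounded, and $q_0g$ is bounded. So $\mathcal R_K*g\in L^2$, and therefore $x\in\cH_G$. This gives density, hence $\supp\mu=C_0([0,T])$. The inclusion $\supp\mu\subset C_0$ holds because $G_0=0$.

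Some care is needed with associativity and commutativity of convolutions involving the measure $\mathcal R_K$ and the $L^1$ kernels $K$ and $q_K$. Fubini applies since all terms are nonnegative or integrable on $[0,T]$.
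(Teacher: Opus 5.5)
Your proposal is correct and follows essentially the same route as the paper: Cauchy--Schwarz and the It\^o isometry against \eqref{eq:kernel-increment-modulus}, then Kolmogorov, for (i)--(ii); identification of $\cH_G$ as the image of $L^2([0,T])$ under $f\mapsto\nu K*f$, with injectivity from $\mathcal R_K*K=1$; and density via $x=K*(\mathcal R_K*x')$. The only difference is that you restrict to targets $x=1*g$ with $g\in C^1$ and $g(0)=0$, which is unnecessary (and harmless), since boundedness of $g$ already gives $\mathcal R_K*g\in L^2$; this is why the paper can take all of $C^1_0([0,T])$ directly.
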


\begin{lemma}
\label{lem:borell}
Let $G=\nu K*\dd B$, and let $\mu$ be its law on $C_0([0,T])$. Let
$\Psi:C_0([0,T])\to[0,\infty]$ be Borel. Assume that there exist
$\alpha>0$, $B>0$, and $C<\infty$ such that
\[
\forall g\in C_0([0,T]) \text{ with } \|g\|_\infty\le\alpha,
\quad
\forall f\in L^2([0,T]),\quad
\Psi(g+\nu K*f)
\le
B\|f\|_{L^2([0,T])}^2+C.
\]
Then, for every $\delta\in(0,1/2)$, there exists
$C_\delta<\infty$ such that
\[
\forall R\ge0,\quad
\mu(\Psi>R)
\le
C_\delta
\exp\left(-\left(\frac12-\delta\right)\frac{R}{B}\right).
\]
\end{lemma}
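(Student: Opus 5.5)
The plan is to apply Borell's Gaussian isoperimetric inequality, in its Cameron--Martin enlargement form, to the sup-norm ball $A:=\{g\in C_0([0,T]):\|g\|_\infty\le\alpha\}$. That form reads: if $\mu(A)\ge\Phi(a)$, then for every $r\ge0$,
\[
\mu_*(A+rU)\ge\Phi(a+r),
\]
where $\Phi$ is the standard normal distribution function, $\mu_*$ is inner measure, and $U$ is the closed unit ball of $\cH_G$. By \myref{item:volterra-cm-space} of Lemma \ref{lem:volterra-gaussian-cm},
\[
rU=\{\nu K*f:\|f\|_{L^2([0,T])}\le r\}.
\]

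\textbf{Step 1: the ball $A$ has positive mass.} Since $\supp\mu=C_0([0,T])$ by \eqref{eq:support-C0}, and $A$ contains an open neighbourhood of $0$ in $C_0([0,T])$, we get $\mu(A)>0$. Alternatively, Anderson's inequality for the centred Gaussian measure $\mu$ gives the same conclusion. We then set $a:=\Phi^{-1}(\mu(A))\in(-\infty,\infty]$. If $\mu(A)=1$, we replace $a$ by any finite number, which only weakens the bound.

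\textbf{Step 2: inclusion.} Fix $R>C$ and put $r:=\sqrt{(R-C)/B}$. Any point of $A+rU$ has the form $g+\nu K*f$ with $\|g\|_\infty\le\alpha$ and $\|f\|_{L^2}\le r$. By hypothesis,
\[
\Psi(g+\nu K*f)\le Br^2+C=R.
\]
Hence the Borel set $\{\Psi>R\}$ is disjoint from $A+rU$. Since $\{\Psi>R\}$ is measurable, this gives
\[
\mu(\Psi>R)\le 1-\mu_*(A+rU)\le 1-\Phi(a+r).
\]
Using inner measure here avoids any discussion of whether the Minkowski sum $A+rU$ is measurable.

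\textbf{Step 3: Gaussian tail.} For $r\ge -a$, the bound $1-\Phi(x)\le e^{-x^2/2}$ for $x\ge0$ gives
\[
\mu(\Psi>R)\le \exp\left(-\tfrac12(a+r)^2\right).
\]
Fix $\delta\in(0,1/2)$ and set $\eta:=2\delta$. Young's inequality gives $(a+r)^2\ge(1-\eta)r^2-c_\eta a^2$ for some $c_\eta<\infty$. Therefore
\[
\mu(\Psi>R)\le e^{c_\eta a^2/2}\exp\left(-\left(\tfrac12-\delta\right)\frac{R-C}{B}\right).
\]
The factor $e^{(\frac12-\delta)C/B}$ is absorbed into the constant.

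\textbf{Step 4: small $R$.} The remaining values of $R$ are those with $R\le C$ or $r<-a$. This is a bounded range $R\le R_0$, on which we use the trivial bound $\mu(\Psi>R)\le1$. Choosing $C_\delta\ge e^{(\frac12-\delta)R_0/B}$ covers this range, so a single constant $C_\delta$ works for all $R\ge0$.

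The only genuinely non-routine input is the correct form of Borell's inequality. That is, the enlargement is by the Cameron--Martin unit ball identified in Lemma \ref{lem:volterra-gaussian-cm}, and one must handle measurability via inner measure. The positivity of $\mu(A)$ comes from the support statement already established, and everything else is elementary.
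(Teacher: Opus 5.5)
Your proposal is correct and follows essentially the same route as the paper: positivity of $\mu(A_\alpha)$ from the full support, Borell's inequality for the Cameron--Martin enlargement $A_\alpha+r\mathbb B_{\cH_G}$ stated with inner measure, the inclusion into $\{\Psi\le R\}$ with $r=\sqrt{(R-C)/B}$, a Young-inequality absorption of $a$ into the Gaussian tail, and the trivial bound on a bounded range of $R$. Your treatment of the case $\mu(A)=1$, replacing $a$ by a finite number, is only a harmless variant of the paper's separate handling of that case.
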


\begin{remark}
The lemma is the infinite-dimensional analogue of the following elementary
one-dimensional fact. Let $X\sim N(0,1)$ and $\Psi : \mathbb R \to [0,\infty]$. If
$\Psi(a+h)\le Bh^2+C$ for every $|a|\le\alpha$, then
$\{\Psi\le Br^2+C\}$ contains
$[-\alpha,\alpha]+[-r,r],$
and hence
$\mathbb{P}(\Psi(X)>Br^2+C)
\le
\mathbb{P}(|X|>\alpha+r),$
which has Gaussian decay. In the lemma, $X$ is replaced by the Gaussian
Volterra process $G$, the interval $[-\alpha,\alpha]$ by
$A_\alpha=\{g \in C_0([0,T]) ~|~\|g\|_\infty\le\alpha\}$, the shifts $[-r,r]$ by the
Cameron--Martin ball $r\mathbb B_{\cH_G}:=\{ g  \in  \cH_G ~|~ \|g\|_{\cH_G} \le r\}$, and the estimate
$\Psi(a+h)\le Bh^2+C$ by
$\Psi(g+\nu K*f)\le B\|f\|_{L^2([0,T])}^2+C.$
Borell's inequality is the corresponding Gaussian tail estimate for
Cameron--Martin enlargements.
\end{remark}

\begin{lemma}\label{lem:well-posed-general}
Assumptions \ref{ass:kernel} and \ref{ass:volatility} are in force. Let
$a>0$. The following assertions hold.
\begin{enumerate}[label=(\roman*),ref=(\roman*)]
\myitem{item:feedback-well-posedness}
For every $x\in C([0,T])$, the Volterra equation
\begin{equation}\label{eq:pathwise-volterra}
\forall t\in[0,T],\quad
Y_t^x
=
y_0+x_t-\nu a\int_0^tK(t-s)\sigma(s,Y_s^x)\dd s
\end{equation}
has a unique continuous solution in $C([0,T])$.

\myitem{item:feedback-continuity}
The solution map
$x\longmapsto Y^x$
is continuous from $C([0,T])$ to itself.

\myitem{item:feedback-coercivity}
Define
\[
\Psi:C([0,T])\to[0,\infty],
\quad
x \mapsto \int_0^T\sigma(t,Y_t^x)^2\dd t.
\]
For every $B>a^{-2}$, there exist $\alpha>0$ and
$C<\infty$ such that
\[
\forall g\in C_0([0,T]) \text{ with } \|g\|_\infty\le\alpha,
\quad
\forall f\in L^2([0,T]),\quad
\Psi(g+\nu K*f)
\le
B\|f\|_{L^2([0,T])}^2+C.
\]
\end{enumerate}
\end{lemma}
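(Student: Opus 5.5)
The statement to prove is Lemma~\ref{lem:well-posed-general}. The plan is to derive all three items from one a~priori estimate, namely the coercivity bound of \myref{item:feedback-coercivity}. That bound comes from a deterministic Volterra chain-rule inequality for the weighted Lyapunov functional $m(t)\mathcal S(\cdot)$, which uses the resolvent $\mathcal R_K$ of Assumption~\myref{item:kernel-resolvent}.

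\textbf{Chain rule.} The tool I would establish, or recall, is the following. Let $u=K*h$ with $h\in L^2$ and let $\Phi$ be convex and $C^1$. Because $\mathcal R_K=q_0\delta_0+q_K\,dt$ with $q_K\ge0$ non-increasing and $\mathcal R_K*K=1$, one expects
\[
\bigl(\mathcal R_K*(\Phi(u)-\Phi(0))\bigr)(t)\le\int_0^t\Phi'(u_s)h_s\dd s .
\]
I would prove it first for smooth $h$, where $\mathcal R_K*u=\int_0^\cdot h$, by writing $\Phi(u_t)-\Phi(u_{t-r})\le\Phi'(u_t)(u_t-u_{t-r})$ and using the monotonicity of $q_K$ in an integration by parts. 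The general case then follows by $L^2$-approximation, using continuity of $K*\cdot$ from Lemma~\ref{lem:volterra-gaussian-cm}. A time weight $m\in C^1$ produces an extra term of the form $\int(m')_+\mathcal S(u)$.

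\textbf{Step 1: item \myref{item:feedback-coercivity}.} Fix $\|g\|_\infty\le\alpha$ and $x=g+\nu K*f$, and set $U:=Y^x-y_0-g=\nu K*(f-a\sigma(\cdot,Y^x))$. I would apply the weighted chain rule to $\Phi_t(u)=m(t)\mathcal S(y_0+u)$. The left side is bounded below by $-C$ since $\mathcal S\ge0$. On the right side, $s(y_0+U)=s(Y-g)$, and the growth bound \eqref{eq:s-derivative-growth} gives $|s(Y-g)-s(Y)|\le(e^{C_s\alpha}-1)(|s(Y)|+1)$, which is small when $\alpha$ is small. Using $m\,s=\sigma-r_\sigma$ and the negligibility \eqref{eq:r-negligible}, the right side becomes
\[
-\nu a\int_0^t\sigma^2+\nu\int_0^t\sigma f+\varepsilon\int_0^t\sigma^2+C_\varepsilon+\int_0^t(m')_+\mathcal S(Y-g).
\]
The last term has the right sign if $m$ is non-increasing. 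Otherwise it is at most $\delta\int s^2+C$ by \eqref{eq:weight-lyapunov-compatibility}, and hence at most $\varepsilon\int\sigma^2+C$. Young's inequality with the optimal split, $\sigma f\le\frac a2\sigma^2+\frac1{2a}f^2$, then gives
\[
\Bigl(\tfrac a2-\varepsilon'\Bigr)\int_0^T\sigma^2\le\tfrac1{2a}\|f\|_{L^2}^2+C .
\]
Since $\frac{1/(2a)}{a/2}=a^{-2}$, choosing $\varepsilon'$ small (hence $\alpha$ and $\delta$ small) yields $\Psi(x)\le B\|f\|_{L^2}^2+C$ for any prescribed $B>a^{-2}$. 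This estimate is first proved on the maximal existence interval of the local solution, with constants independent of that interval.

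\textbf{Step 2: items \myref{item:feedback-well-posedness} and \myref{item:feedback-continuity}.} Local existence and uniqueness come from the local theory cited after \eqref{eq:tilted-feedback}. For global existence with arbitrary $x\in C([0,T])$, replace $y_0$ by $y_0+x_0$ so that $x\in C_0$. By \eqref{eq:support-C0}, $\cH_G$ is dense in $C_0$, so one can write $x=g+\nu K*f$ with $\|g\|_\infty\le\alpha$. Step~1 then bounds $\int\sigma(t,Y_t)^2\dd t$ on the maximal interval. Cauchy--Schwarz with $K\in L^2$ gives $\sup|Y|\le|y_0|+\|x\|_\infty+\nu a\|K\|_{L^2}\|\sigma(\cdot,Y)\|_{L^2}$, which excludes blow-up. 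For continuity, fix $x$ and its decomposition. For $\|x'-x\|_\infty$ small the same $f$ works with $g$ replaced by $g+(x'-x)$ (taking $\alpha$ half of the admissible value). This gives a uniform sup bound $R$ on $Y^{x'}$ in a neighbourhood. On $[-R,R]$, $\sigma$ is Lipschitz, so $|Y^{x'}-Y^x|_t\le\|x'-x\|_\infty+L\nu a(K*|Y^{x'}-Y^x|)(t)$, and a Volterra Grönwall lemma for $L^1$ kernels concludes.

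\textbf{Main obstacle.} I expect the hardest step to be the rigorous weighted chain-rule inequality for singular kernels and merely $L^2$ integrands. This includes controlling the $m'$-term and justifying the approximation when $\mathcal S(Y)$ is only known to be finite on the maximal interval. If the direct proof is delicate, my first attempt would be to regularise $K$ by kernels whose resolvents keep the non-increasing structure, for instance completely monotone approximations, and pass to the limit.
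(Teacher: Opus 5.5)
Your proposal is correct and follows essentially the paper's route: the resolvent chain-rule inequality of Lemma~\ref{lem:chain-rule} weighted by $m$, the small-shift comparison via \eqref{eq:s-derivative-growth}, the optimal Young split producing the ratio $a^{-2}$, density of $\cH_G$ in $C_0([0,T])$ together with the continuation criterion for global existence, and a Volterra Grönwall argument for continuity. The main difference is that you fold the paper's intermediate Lemma~\ref{lem:volterra-energy-small-shift} directly into the coercivity estimate with $u=\nu(f-a\sigma(\cdot,Y))$, and there is one harmless slip: the error terms $r_\sigma(t,Y)\,f$ and $m\bigl(s(Y-g)-s(Y)\bigr)f$ also contribute an $\varepsilon\|f\|_{L^2}^2$ term on the right, which only replaces $\frac1{2a}$ by $\frac1{2a}+\varepsilon$ and does not affect the conclusion.
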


We now provide the proof of Proposition \ref{prop:negative-feedback}.

\begin{proof}[Proof of Proposition \ref{prop:negative-feedback}]
We fix $\Lambda\in[0,a^2/2)$. If $\Lambda=0$, the result holds trivially. We
assume $\Lambda>0$. By \myref{item:volterra-gaussian-continuity} of Lemma
\ref{lem:volterra-gaussian-cm}, the process $G$ defined in
\eqref{eq:G-def} is a $C([0,T])$-valued Gaussian random variable. Since
$G_0=0$, we denote by $\mu$ its law on $C_0([0,T])$. By
\myref{item:feedback-well-posedness} of Lemma
\ref{lem:well-posed-general}, for every $x\in C([0,T])$, there exists a
unique solution $Y^x\in C([0,T])$ to
\[
\forall t\in[0,T],\quad
Y_t^x
=
y_0+x_t-
\nu a\int_0^tK(t-s)\sigma(s,Y_s^x)\dd s.
\]
We denote by $\Psi$ the map defined in \myref{item:feedback-coercivity} of
Lemma \ref{lem:well-posed-general}. By \myref{item:feedback-continuity} of
Lemma \ref{lem:well-posed-general}, $\Psi$ is Borel measurable. We still
denote by $\Psi$ its restriction to $C_0([0,T])$. Since the solution of
\eqref{eq:negative-feedback} is exactly $Y^G$, we have
\[
\int_0^T \sigma(t,Y_t)^2\dd t
=
\Psi(G).
\]
Since $\Lambda<a^2/2$, we choose $B>a^{-2}$ and $\delta\in(0,1/2)$ such that
$\Lambda
<
\frac{1/2-\delta}{B}.$
We take $\alpha,C$ from \myref{item:feedback-coercivity} of Lemma
\ref{lem:well-posed-general}. Then, for every $g\in C_0([0,T])$ with
$\|g\|_\infty\le\alpha$ and every $f\in L^2([0,T])$,
\[
\Psi(g+\nu K*f)
\le
B\|f\|_{L^2([0,T])}^2+C.
\]
Therefore, Lemma \ref{lem:borell} applies with this $\alpha$, the above
$B$, and this $C$. It yields
\begin{equation}\label{eq:psi-tail}
\forall R\ge0,\quad
\mu(\Psi>R)
\le
C_\delta
\exp\left(-\left(\frac12-\delta\right)\frac{R}{B}\right).
\end{equation}
Finally, the layer-cake formula and a change of variables give
\[
\E\left[
\exp\left(\Lambda\int_0^T \sigma(t,Y_t)^2\dd t\right)
\right]
=
\E_\mu\left[e^{\Lambda\Psi}\right] = \int_{0}^{\infty} \mu(e^{\Lambda \Psi} > R) dR =
1+\Lambda\int_0^\infty e^{\Lambda R}\mu(\Psi>R)\dd R.
\]
By \eqref{eq:psi-tail}, it follows that
\[
\E\left[
\exp\left(\Lambda\int_0^T \sigma(t,Y_t)^2\dd t\right)
\right]
\le
1+\Lambda C_\delta
\int_0^\infty
\exp\left(
-\left(
\frac{1/2-\delta}{B}
-\Lambda
\right)R
\right)\dd R
<\infty,
\]
by the choice of $B$ and $\delta$.
\end{proof}

\section{Critical moments in the lognormal Volterra--Bergomi model: Proof of Theorem \ref{prop:bergomi-critical-kernels}}

Assertion \myref{item:critical-constant-kernel} and the single-exponential case
of \myref{item:critical-multiexponential-kernel} follow from the classical
arguments in \citet[Proposition~6]{Jourdain2004} and
\citet[Theorem~2.3 and Section~2.5]{LionsMusiela2007}; the deterministic scale
$m$ only contributes the drift term $m'/m$. We record the short extension to
positive finite sums.

\begin{proof}[Proof of Theorem \ref{prop:bergomi-critical-kernels}]
For \myref{item:critical-multiexponential-kernel}, set $k_0:=K(0)$. After
combining equal rates, a partial-fraction decomposition of
$1/(z\widehat K(z))$ shows that the first-kind resolvent has the form
\[
\mathcal R_K(\dd t)=k_0^{-1}\delta_0(\dd t)+r(t)\dd t,
\]
where $r$ is non-increasing and $r(T)>0$. Set
\[
\forall t \in [0,T], \quad X_t:=Y_t-y_0,
\quad q_t:=m(t)e^{Y_t},
\quad M_T:=\int_0^Tq_t\dd B_t,
\quad
Z_t:=r(0)X_t+\int_0^t r'(t-s)X_s\dd s.
\]
Since $\mathcal R_K*X=\nu B$, It\^o's formula gives
\[
\nu M_T
=
\frac{q_T-q_0}{k_0}
+
\int_0^Tq_t\left(
Z_t-\frac1{k_0}\left(\frac{m'(t)}{m(t)}+\frac{\nu^2k_0^2}{2}\right)
\right)\dd t.
\]
Moreover,
\[
Z_t=r(t)X_t+\int_0^t(X_t-X_{t-u})(-r'(u))\dd u.
\]
Using $e^x(x-y)\ge e^x-e^y$ and $\inf_{t\in[0,T]}m(t)r(t)>0$, the integral on the
right is bounded below by
\[
e^{y_0}\int_0^T\bigl(m(t)r(t)X_t-C\bigr)e^{X_t}\dd t\ge-C_T.
\]
Hence $M_T\ge-C_T$, and conditioning on $B$ at the critical exponent yields
$\E[S_T^{p_\rho}]/S_0^{p_\rho}=\E[e^{-aM_T}]<\infty$, where
$a=-\rho/(1-\rho^2)>0$. This proves
\myref{item:critical-multiexponential-kernel}.

We now prove \myref{item:critical-rough-kernel}. Throughout the remainder of
the proof, we set $\alpha:=H+\frac12\in\left(\frac12,1\right)$ and
\[
\forall t \in [0,T], \quad 
q_t:=m(t)e^{Y_t},
\quad
M_T:=\int_0^Tq_t\dd B_t, \quad
K_H(t)=\frac{t^{\alpha-1}}{\Gamma(\alpha)}.
\]
At the critical exponent, conditioning on the Brownian motion $B$ removes the
$\int_0^Tq_t^2\dd t$-term from the moment formula. Thus it is enough to prove that
$\E[e^{-aM_T}]=\infty$ for $a>0$. We condition further on the normalized
increments of $B$ over a fine mesh. The resulting conditional mean of $M_T$
contains two competing terms:
\[
\text{a control term of size }R e^{\nu R}
\quad\text{and}\quad
\text{a negative diagonal term of size }n^{1-\alpha}e^{\nu R}.
\]
The second term is specific to the singular fractional kernel. Choosing the
mesh so that $n^{1-\alpha}\gg R$ makes the conditional mean very negative on
a suitable Gaussian event. Conditional Jensen's inequality then turns this
into the desired explosion.

\medskip
\noindent\textbf{Step 1: reduction to a negative exponential moment.}
Conditionally on $B$, the process $q$ is independent of $B^\perp$. Hence,
for every $p>0$, the explicit stochastic-exponential representation of $S$
gives, with equality in $[0,\infty]$,
\begin{equation}
\label{eq:critical-conditioning-identity}
\frac{\E[S_T^p]}{S_0^p}
=
\E\left[
\exp\left(
 p\rho M_T
 +\frac12\bigl(p^2(1-\rho^2)-p\bigr)\int_0^Tq_t^2\dd t
\right)
\right].
\end{equation}
At $p=p_\rho$, the coefficient of $\int_0^Tq_t^2\dd t$ vanishes. Therefore,
it remains to prove
\begin{equation}
\label{eq:negative-laplace-explosion}
\E[e^{-aM_T}]=\infty, \text{ with } a=-\frac{\rho}{1-\rho^2}>0.
\end{equation}
In fact, the argument below proves \eqref{eq:negative-laplace-explosion} for
every $a>0$.

\medskip

\noindent\textbf{Step 2: conditioning reveals the rough diagonal term.}
For $n\ge1$, we set $\Delta_n:=T/n$ and
\[
I_j^n:=((j-1)\Delta_n,j\Delta_n],
\quad
e_j^n:=\Delta_n^{-1/2}\1_{I_j^n},
\quad
Z_j^n:=\int_0^Te_j^n(s)\dd B_s
=\frac{B_{j\Delta_n}-B_{(j-1)\Delta_n}}{\sqrt{\Delta_n}},
\]
for $j=1,\ldots,n$. The vector $Z^n=(Z_1^n,\ldots,Z_n^n)$ is standard
Gaussian in $\R^n$. For $z\in\R^n$, we define
\[
h_z^n:=\sum_{j=1}^nz_je_j^n,
\quad
g_z^n:=K_H*h_z^n.
\]
We set
\[
\forall t \in [0,T], \quad
r_n(t):=\int_0^tK_H(t-s)^2\dd s-
\sum_{j=1}^nk_j^n(t)^2,
\quad
\kappa_n(t):=\sum_{j=1}^ne_j^n(t)k_j^n(t), \quad k_j^n(t):=\int_0^tK_H(t-s)e_j^n(s)\dd s.
\]
Here $g_z^n(t)$ and $r_n(t)$ are respectively the conditional mean and
variance of $\int_0^tK_H(t-s)\dd B_s$ given $Z^n=z$. We set
\[\forall t \in [0,T], \quad
W_n(t,z):=
m(t)\exp\left(
y_0+\nu g_z^n(t)+\frac{\nu^2}{2}r_n(t)
\right)
=\E[q_t\mid Z^n=z].
\]
Since $0 \le r_n(t) \le \int_{0}^{t} K_H(t-s)^2 \dd s$ for all $t \in [0,T]$, and $t \mapsto \int_{0}^{t} K_H(t-s)^2 \dd s$ is continuous, there are constants $0<c_-\le c_+<\infty$, independent of
$n$, such that
\begin{equation}
\label{eq:conditional-weight-bounds}
\forall (t,z)\in[0,T]\times\R^n, \quad c_-e^{\nu g_z^n(t)}
\le W_n(t,z)\le
c_+e^{\nu g_z^n(t)}.
\end{equation}
The conditional mean of $M_T$ is
\begin{equation}
\label{eq:conditional-mean-critical}
\E[M_T\mid Z^n=z]
=
\int_0^T
W_n(t,z)
\bigl(h_z^n(t)-\nu\kappa_n(t)\bigr)
\dd t.
\end{equation}
Indeed, let $F\in C_c^\infty(\R^n)$. Since $\E[q_t\mid Z^n]=W_n(t,Z^n)$ and $D_t F(Z^n) = \sum_{j=1}^{n} \partial_{j} F(Z^n) e_j^{n}(t)$ in $L^2([0,T] \times \Omega)$, where $D$ denotes the Malliavin derivative operator, Malliavin duality for the adapted integrand $q$
gives
\[
\E[F(Z^n)M_T] = \E\left[\int_{0}^{T}D_t F(Z^n) q_t \dd t\right] =\sum_{j=1}^n
\E\left[
\partial_jF(Z^n)
\int_0^Te_j^n(t)W_n(t,Z^n)\dd t
\right].
\]
Moreover, since $\partial_jW_n(t,z)=\nu k_j^n(t)W_n(t,z)$, for all $(t,z) \in [0,T] \times \R^n$, ordinary Gaussian integration by parts yields
\[
\E\left[F(Z^n)M_T\right]=\E\left[F(Z^n) \sum_{j=1}^n
\left(
Z^n_j\int_0^Te_j^n(t)W_n(t,Z^n)\dd t
-\nu\int_0^Te_j^n(t)k_j^n(t)W_n(t,Z^n)\dd t\right)\right].
\]
Finally, we deduce
\[
\begin{aligned}
\E[M_T\mid Z^n=z]
&=
\sum_{j=1}^n
\left[
z_j\int_0^Te_j^n(t)W_n(t,z)\dd t
-\nu\int_0^Te_j^n(t)k_j^n(t)W_n(t,z)\dd t
\right] \\
&=
\int_0^TW_n(t,z)
\bigl(h_z^n(t)-\nu\kappa_n(t)\bigr)\dd t.
\end{aligned}
\]
The right-hand side is continuous in $z$ and therefore defines a continuous
version of the conditional expectation, which proves
\eqref{eq:conditional-mean-critical}. For $t\in I_j^n$, the fractional form of $K_H$ gives
\begin{equation}
\label{eq:kappa-fractional-diagonal}
\kappa_n(t)
=
\frac{(t-(j-1)\Delta_n)^\alpha}
{\Gamma(\alpha+1)\Delta_n}.
\end{equation}
Thus, on the right half
$\left((j-\tfrac12)\Delta_n,j\Delta_n\right],$
one has
\begin{equation}
\label{eq:kappa-lower-half-block}
\kappa_n(t)
\ge
\frac{2^{-\alpha}}{\Gamma(\alpha+1)}
\Delta_n^{\alpha-1}.
\end{equation}
Because $\alpha<1$, this lower bound diverges as the mesh tends to zero.

\medskip

\noindent\textbf{Step 3: forcing a constant positive control.}
We set
\[
\forall t \in [0,T], \quad \overline K(t):=(K_H*1)(t)=\frac{t^\alpha}{\Gamma(\alpha+1)},
\]
and, for $n\ge2$,
\[
R_n:=\frac{\log n}{\nu \overline K(T/2)}.
\]
We define the event
\[
A_n
:=
\bigcap_{j=1}^n
\left\{
R_n\Delta_n^{1/2}
\le Z_j^n
\le (R_n+1)\Delta_n^{1/2}
\right\} \in \mathcal{F}.
\]
On $A_n$, one has for every $t\in(0,T]$,
\begin{equation}
\label{eq:constant-control-bounds}
R_n\le h_{Z^n}^n(t)\le R_n+1.
\end{equation}
Since $K_H$ is non-negative, \eqref{eq:constant-control-bounds} gives
\[
R_n\overline K(t)
\le
g_{Z^n}^n(t)
=(K_H*h_{Z^n}^n)(t)
\le
(R_n+1)\overline K(t).
\]
It follows that
\begin{equation}
\label{eq:constant-control-comparison}
\forall t \in [0,T], \quad  e^{\nu R_n\overline K(t)}
\le
e^{\nu g_{Z^n}^n(t)}
\le
e^{\nu\overline K(T)}e^{\nu R_n\overline K(t)}.
\end{equation}
Since $\overline K$ is increasing, the map
$t\mapsto e^{\nu R_n\overline K(t)}$ is increasing. Hence,
\begin{equation}
\label{eq:constant-control-integral-lower}
\int_0^Te^{\nu g_{Z^n}^n(t)}\dd t
\ge \int_{0}^{T} e^{\nu\overline K(t)R_n} \dd t \ge \int_{\frac{T}{2}}^{T} e^{\nu\overline K(t)R_n} \dd t \ge
\frac{T}{2}e^{\nu\overline K(T/2)R_n}
=
\frac{T}{2}n.
\end{equation}
Let
\[
E_n
:=
\bigcup_{j=1}^n
\left((j-\tfrac12)\Delta_n,j\Delta_n\right].
\]
With the same monotonicity argument, we get
\[
\int_{E_n}e^{\nu R_n\overline K(t)}\dd t
\ge
\frac12\int_0^Te^{\nu R_n\overline K(t)}\dd t.
\]
Using \eqref{eq:constant-control-comparison} and the previous inequality, we obtain
\[
\begin{aligned}
\int_{E_n}e^{\nu g_{Z^n}^n(t)}\dd t
\ge
\int_{E_n}e^{\nu R_n\overline K(t)}\dd t
\ge
\frac12\int_0^Te^{\nu R_n\overline K(t)}\dd t
\ge
\frac12e^{-\nu\overline K(T)}
\int_0^Te^{\nu g_{Z^n}^n(t)}\dd t.
\end{aligned}
\]
Combining this estimate with
\eqref{eq:kappa-lower-half-block} yields
\begin{equation}
\label{eq:trace-integral-lower}
\int_0^T
\kappa_n(t)e^{\nu g_{Z^n}^n(t)}\dd t
\ge
c\Delta_n^{\alpha-1}
\int_0^Te^{\nu g_{Z^n}^n(t)}\dd t,
\end{equation}
with $c=\frac{2^{-(\alpha+1)}}{\Gamma(\alpha+1)}e^{-\nu\frac{T^\alpha}{\Gamma(\alpha +1)}}>0$.

\medskip

\noindent\textbf{Step 4: the diagonal term dominates the Gaussian cost.}
Using \eqref{eq:conditional-mean-critical},
\eqref{eq:conditional-weight-bounds}, \eqref{eq:constant-control-bounds}, and
\eqref{eq:trace-integral-lower}, we deduce that, on $A_n$,
\[
\E[M_T\mid Z^n]
\le
\left(c_{+}(R_n+1)-c_{-}c \nu\Delta_n^{\alpha-1}\right)\int_0^Te^{\nu g_{Z^n}^n(t)}\dd t.
\]
Since
\[
\frac{\Delta_n^{\alpha-1}}{R_n+1}
=
T^{\alpha-1}\frac{n^{1-\alpha}}{R_n+1}
\longrightarrow\infty,
\]
\eqref{eq:constant-control-integral-lower} yields
\begin{equation}
\label{eq:conditional-mean-negative}
\E[M_T\mid Z^n]
\le
-C_1 n^{1-\alpha}\int_0^Te^{\nu g_{Z^n}^n(t)}\dd t
\le
-C_2 n^{2-\alpha},
\quad\text{on }A_n,
\end{equation}
for all sufficiently large $n$, and $C_1,C_2>0$ independent of $n$. It remains to estimate the probability of
$A_n$. Since the coordinates of $Z^n$ are independent standard normal
variables, one has
\[
\begin{aligned}
\Prob(A_n)
\ge
\left(
\Delta_n^{1/2}
\inf_{R_n\Delta_n^{1/2}\le x\le(R_n+1)\Delta_n^{1/2}}
\frac{e^{-x^2/2}}{\sqrt{2\pi}}
\right)^n \ge
\left(\frac{1}{\sqrt{2 \pi}}\Delta_n^{1/2}\right)^n
\exp\left(
-\frac{n}{2}(R_n+1)^2\Delta_n
\right).
\end{aligned}
\]
Since $n\Delta_n=T$ and $R_n=O(\log n)$, we obtain
\begin{equation}
\label{eq:gaussian-tube-probability}
\Prob(A_n)
\ge
\exp(-Cn\log n),
\end{equation}
with $C>0$ some positive constant which does not depend on $n$. Since $x\mapsto e^{-ax}$ is convex, conditional Jensen's inequality and
\eqref{eq:conditional-mean-negative} give
\[
\begin{aligned}
\E[e^{-aM_T}]
\ge
\E\left[
\exp\left(-a\E[M_T\mid Z^n]\right)
\right]\ge
\Prob(A_n)\exp\left(aC_2 n^{2-\alpha}\right).
\end{aligned}
\]
Together with \eqref{eq:gaussian-tube-probability}, this yields
\[
\log\E[e^{-aM_T}]
\ge
aC_2 n^{2-\alpha}-Cn\log n.
\]
Since $\alpha<1$, the right-hand side tends to $+\infty$ as $n\to\infty$.
This proves
\eqref{eq:negative-laplace-explosion}, and then
\eqref{eq:critical-conditioning-identity} proves
$\E[S_T^{p_\rho}]=\infty$ in \myref{item:critical-rough-kernel}. The strict
subcritical finiteness follows from Theorem \ref{thm:bergomi-main} for all three
kernel classes, while the strict supercritical explosion follows from Lemma
\ref{lem:osgood-supercritical-explosion}. Indeed, the lognormal response
satisfies the Osgood condition of that lemma. This proves the displayed
classification and completes the proof.
\end{proof}

\section{Absence of a Feller criterion in rough Heston: Proof of Theorem \ref{thm:heston-main}}

We provide two heuristics explaining this phenomenon.
\begin{enumerate}
\item Let us first explain why one should not expect a Feller-type non-attainment
condition in the fractional rough case. In the classical CIR diffusion
\[
\dd X_t=(b-\kappa X_t)\dd t+\nu\sqrt{X_t}\dd W_t,
\]
the behaviour near zero is governed by the competition between the inward drift
$b\dd t$ and the square-root noise $\nu\sqrt{X_t}\dd W_t$. Over a short time
interval of length $h$, and when the process is close to a small level $x$,
one has, at the heuristic level,
\[
\text{inward deterministic push}
\ \simeq\ 
b h,
\quad
\text{noise variance}
\ \simeq\ 
\nu^2 x h.
\]
After factoring out the small level $x$ from the squared noise coefficient, the local ratio between the inward drift and the squared noise coefficient
is independent of the time scale:
$\frac{2 b h}{\nu^2 h}
=
\frac{2b}{\nu^2}.$ The classical Feller condition $2b\ge\nu^2$ says precisely that this ratio is
large enough to prevent the square-root diffusion from reaching the boundary. For the rough Heston variance, the same comparison has to be made after the
fractional Volterra smoothing. Ignoring the lower-order mean-reversion term
$-\kappa V$ near the zero boundary, and looking over a short interval of length
$h$, the deterministic source term $b$ contributes at the scale
\[
b\int_0^h K_H(s)\dd s
=
\frac{b}{\Gamma(H+\frac32)}h^{H+\frac12}
\simeq
h^{H+\frac12}.
\]
On the noise side, suppose heuristically that, during this short interval, the
process stays close to a small level $x$. The new stochastic contribution over
the interval has the form
\[
\nu\int_0^h K_H(h-s)\sqrt{x}\,\dd W_s .
\]
By It\^o's isometry, its variance is
\[
\nu^2 x\int_0^h K_H(s)^2\dd s
=
\frac{\nu^2 x}{2H\Gamma(H+\frac12)^2}h^{2H}.
\]
Therefore,
\[
\frac{
2b\int_0^h K_H(s)\dd s
}{
\nu^2\int_0^h K_H(s)^2\dd s
}
\simeq h^{\frac12-H}
\longrightarrow0,
\quad h\downarrow0,
\]
since $H<\frac12$.
Hence, at very small time scales, the singularity of the kernel makes the
short-time variance of the stochastic convolution dominate the deterministic
inward push. This is the opposite of what happens in the classical CIR model, where the
corresponding ratio is the constant $\frac{2b}{\nu^2}$.

\item A second heuristic comes from the nonsingular Volterra--CIR boundary
criteria of \citet{BondiPulido2024}. In our notation, for the regular-kernel
equation
\[
V_t
=
v_0+\int_0^tK(t-s)(b-\kappa V_s)\dd s
+
\nu\int_0^tK(t-s)\sqrt{V_s}\dd W_s,
\]
their sufficient non-attainment condition is
$2b\ge K(0)\nu^2.$
For the fractional rough Heston kernel, one has $K_H(0+)=\infty$. Thus, along
smooth-kernel approximations of $K_H$, the condition becomes harder and harder
to satisfy as $K(0)$ grows. In the rough limit, the finite-$K(0)$ mechanism
underlying the regular Volterra--CIR Feller criterion therefore degenerates,
which is consistent with boundary attainment in the fractional rough case.
\end{enumerate}
\medskip

The preceding heuristic arguments are not proofs, since the rough Heston process
is not a semimartingale and no one-dimensional boundary test is directly
available. We make it rigorous through the affine Riccati--Volterra transform.
\begin{lemma}\label{lem:laplace-resolvent}
For every $\lambda>0$ and every $T>0$, there is a unique locally integrable
non-negative solution $y_\lambda$ to
\begin{equation}\label{eq:riccati-y}
\forall t \in (0,T], \quad
y_\lambda(t)
=
\lambda K_H(t)
-
\int_0^tK_H(t-s)
\left(
\kappa y_\lambda(s)+\frac{\nu^2}{2} y_\lambda(s)^2
\right)\dd s ,
\end{equation}
and
\begin{equation}\label{eq:laplace-resolvent}
\E[e^{-\lambda V_T}]
=
\exp\left(
-v_0(\mathcal R_H*y_\lambda)(T)
-
b\int_0^T y_\lambda(s)\dd s
\right).
\end{equation}
\end{lemma}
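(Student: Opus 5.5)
The plan is to obtain Lemma \ref{lem:laplace-resolvent} from the affine Volterra theory of \citet{AbiJaberLarssonPulido2019}, tailored to the fractional kernel. The argument has three steps.

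\textbf{Step 1 (solving the Riccati--Volterra equation).} We set $\psi:=y_\lambda$ and rewrite \eqref{eq:riccati-y} as $\psi=K_H*(F(\psi))+\lambda K_H$ with $F(u)=-\kappa u-\frac{\nu^2}{2}u^2$.

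\emph{Equivalent form.} Convolving with $\mathcal R_H$ and using $\mathcal R_H*K_H=1$, this equation is equivalent to the fractional Riccati equation
\[
(\mathcal R_H*\psi)(t)=\lambda+\int_0^tF(\psi(s))\dd s .
\]
In particular, $\mathcal R_H*\psi$ is absolutely continuous with derivative $F(\psi)$.

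\emph{Local existence and uniqueness.} These follow from the standard theory of Volterra equations with $L^1$ kernels and locally Lipschitz nonlinearities \citep[Chapter 12]{GripenbergLondenStaffans1990}. The forcing term $\lambda K_H$ is only locally integrable, but it is in $L^1$, and the quadratic term is handled in weighted spaces $t^{H-1/2}L^\infty$.

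\emph{A priori bounds and global existence.} Since $F(u)\le0$ for $u\ge0$ and $K_H\ge0$, we get $\psi\le\lambda K_H$ wherever $\psi\ge0$. Non-negativity is obtained by the invariance argument of \citet[Theorem C.1]{AbiJaberLarssonPulido2019}: the domain $[0,\infty)$ is invariant because $F(0)=0$, the kernel $K_H$ is completely monotone, and the initial input is non-negative. The two-sided bound $0\le\psi\le\lambda K_H$ rules out blow-up and yields a global solution on $(0,T]$. Uniqueness holds among non-negative locally integrable solutions, since on that class the nonlinearity is Lipschitz with the bound $\lambda K_H$.

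\textbf{Step 2 (a martingale).} Fix $T$ and set $\psi_T(t):=\psi(T-t)$. Following the affine transform argument of \citet{AbiJaberLarssonPulido2019}, we define the conditional forward variance $g_t(s):=\E[V_s\mid\mathcal F_t]$ for $s\ge t$, which is explicit via the resolvent of $\kappa K_H$. We then consider
\[
U_t:=\int_t^T F(\psi_T(s))\,g_t(s)\dd s-\int_t^T b\,\psi_T(s)\dd s+(\text{terms from }v_0),
\]
together with $M_t:=\exp(Y_t)$, where $Y_t:=-\lambda g_t(T)$ plus the appropriate accumulated terms. It\^o's formula combined with the Riccati equation shows that $\exp\bigl(-\lambda\,\E[V_T\mid\mathcal F_t]+\ldots\bigr)$ is a local martingale. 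The quadratic variation term $\frac{\nu^2}{2}\psi^2V$ cancels exactly against the $\frac{\nu^2}{2}\psi^2$ in $F$, and the $-\kappa\psi$ term absorbs the mean reversion.

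\textbf{Step 3 (true martingale property and identification).} The local martingale $M$ has values in $(0,1]$, because the exponent is non-positive when $\psi\ge0$ and $V\ge0$. It is therefore a true martingale, which is a key advantage of working with the Laplace transform rather than the Fourier transform. Equating $M_T=e^{-\lambda V_T}$ with $M_0$ gives
\[
\E[e^{-\lambda V_T}]=\exp\Bigl(\int_0^T F(\psi(s))\,\E[V_{T-s}]\dd s-\lambda\E[V_T]-b\int_0^T\psi\Bigr).
\]
This expression is then rewritten in the stated form. We use $\E[V_t]=v_0+K_H*(b-\kappa\E V)$ and the resolvent identity, via the standard manipulation $\lambda+\int_0^TF(\psi)=(\mathcal R_H*\psi)(T)$. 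This reduces the $v_0$-part to $-v_0(\mathcal R_H*\psi)(T)$, and the $b$-part to $-b\int_0^T\psi$ once the terms coming from $K_H*b$ are reorganized by Fubini.

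\textbf{Main obstacle.} The step I expect to be hardest is the bookkeeping in Step 3, together with justifying It\^o's formula given the singularity of $\psi$ at $0$ ($\psi\sim\lambda K_H$ near $0$). I would first try to cite \citet[Theorem 4.3]{AbiJaberLarssonPulido2019} directly for $u=-\lambda$ with $f\equiv0$, and then simply verify that their formula $\exp\bigl(\int_0^T F(\psi)\,g_0+\ldots\bigr)$ coincides with \eqref{eq:laplace-resolvent} by using the fractional Riccati identity from Step 1.
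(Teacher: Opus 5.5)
\textbf{Verdict: genuine gap.} Your fallback in the last paragraph is exactly the paper's proof: cite the transform formula of \citet{AbiJaberLarssonPulido2019} for the scalar square-root case, then convert it using $(\mathcal R_H*y_\lambda)(T)=\lambda-\int_0^T(\kappa y_\lambda+\frac{\nu^2}{2}y_\lambda^2)\dd s$. However, the derivation you actually wrote in Steps 2--3 contains two errors. Carried out as written, it would not reach \eqref{eq:laplace-resolvent}.

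\textbf{First error: the Step 3 identity is false.} It has the wrong sign in front of the $F$-term, and it counts the input $b$ twice, because $\E[V_T]$ already contains the contribution of $b$.
For instance, take $\kappa=b=0$, so that $\E[V_t]=v_0$. Your right-hand side is then $\exp\bigl(-\lambda v_0-\frac{\nu^2}{2}v_0\int_0^Ty_\lambda^2\dd s\bigr)<e^{-\lambda\E[V_T]}$, which contradicts Jensen's inequality.
Similarly, take $\kappa=0$ and let $\nu\downarrow0$. Your expression tends to $\exp(-\lambda v_0-2\lambda b\int_0^TK_H\dd s)$, whereas the correct limit is $\exp(-\lambda v_0-\lambda b\int_0^TK_H\dd s)$.
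The correct starting point, which is the one the paper uses, is
\[
\log\E[e^{-\lambda V_T}]
=-\lambda v_0+v_0\int_0^T\Bigl(\kappa y_\lambda+\frac{\nu^2}{2}y_\lambda^2\Bigr)\dd s-b\int_0^Ty_\lambda\dd s .
\]
Equivalently, it is $-\lambda\E[V_T]+\frac{\nu^2}{2}\int_0^Ty_\lambda(s)^2\E[V_{T-s}]\dd s$. From there the resolvent identity finishes the proof in one line, and no Fubini bookkeeping is needed.

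\textbf{Second error: the true-martingale step rests on the same sign mistake.} The actual exponent is
\[
Y_t=-\lambda\E[V_T\mid\mathcal F_t]+\frac{\nu^2}{2}\int_t^Ty_\lambda(T-s)^2\E[V_s\mid\mathcal F_t]\dd s
=Y_0-\nu\int_0^ty_\lambda(T-s)\sqrt{V_s}\dd W_s-\frac{\nu^2}{2}\int_0^ty_\lambda(T-s)^2V_s\dd s .
\]
This contains a non-negative term and an unsigned stochastic integral. So $Y_t\le0$ is not available a priori. It holds only a posteriori, once $e^{Y_t}=\E[e^{-\lambda V_T}\mid\mathcal F_t]$ is known. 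Hence your claim $M\in(0,1]$ does not prove martingality.

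Invoking the general affine transform theorem does not close this gap either, since it only yields a local martingale together with the formula conditional on true martingality. You need one of the following:
\begin{itemize}
\item the square-root-case result of \citet{AbiJaberLarssonPulido2019}, which the paper invokes and which provides the martingale property;
\item a separate argument. For example, localize at $\tau_n=\inf\{t:V_t\ge n\}$, apply Girsanov, and prove moment bounds for $V$ that are uniform in $n$ under the tilted measures. This works because the extra drift $-\nu^2y_\lambda(T-\cdot)V$ is non-positive.
\end{itemize}

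Step 1, which relies on the same reference for non-negativity, is a reasonable hands-on substitute for the existence and uniqueness part of the citation.
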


\begin{proof}
The affine Volterra transform formula of \citet{AbiJaberLarssonPulido2019},
applied to the scalar square-root case, gives existence and uniqueness of the
non-negative Riccati--Volterra solution $y_\lambda$ and the representation
\[
\E[e^{-\lambda V_T}]
=
\exp\left(
-v_0\lambda
+
v_0\int_0^T
\left(
\kappa y_\lambda(s)+\frac{\nu^2}{2}y_\lambda(s)^2
\right)\dd s
-
b\int_0^T y_\lambda(s)\dd s
\right).
\]
Convolving \eqref{eq:riccati-y} with $\mathcal R_H$ and using
$\mathcal R_H*K_H=1$ gives
\[
(\mathcal R_H*y_\lambda)(T)
=
\lambda
-
\int_0^T
\left(
\kappa y_\lambda(s)+\frac{\nu^2}{2}y_\lambda(s)^2
\right)\dd s.
\]
\end{proof}
In the remainder, we use the following notation for the Riemann--Liouville derivative, for $a \in [0,t)$:
\[
\D_a^{H+\frac12} f(t):=
\frac1{\Gamma(\frac12-H)}
\frac{\dd}{\dd t}
\int_a^t(t-s)^{-H-\frac12}f(s)\dd s,
\]
and we denote $\D^{H + \frac 12}:=\D_0^{H + \frac 12}$.

We need an upper bound on $y_\lambda$ which is uniform in $\lambda$, in
order to let $\lambda\to\infty$ in the Laplace transform. The key observation
is that, after applying the Riemann--Liouville derivative to
\eqref{eq:riccati-y}, the singular source term $\lambda K_H$ disappears and
the equation becomes independent of $\lambda$:
\[
\D^{H+\frac12}y_\lambda(t)
=
-\kappa y_\lambda(t)-\frac{\nu^2}{2}y_\lambda(t)^2.
\]

\begin{lemma}\label{lem:barrier}
We have
\begin{equation}\label{eq:barrier-bound}
\forall A>\frac{4H\Gamma(\frac12-H)}{\nu^2\Gamma(1-2H)}, \quad \forall \lambda>0,\quad \forall t>0,\quad
0\leq y_\lambda(t)\leq At^{-H-\frac12}.
\end{equation}
\end{lemma}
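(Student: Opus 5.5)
The plan is a comparison (barrier) argument for the differentiated Riccati--Volterra equation. The explicit function $\varphi(t):=At^{-H-\frac12}$ should be a strict supersolution of
\[
\D^{H+\frac12}y=-\kappa y-\frac{\nu^2}{2}y^2,
\]
and $y_\lambda$ should stay below it via a first-crossing argument. Non-negativity $y_\lambda\ge0$ is already part of Lemma \ref{lem:laplace-resolvent}, so only the upper bound needs proof.

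\textbf{Step 1 (barrier computation).} Set $\gamma:=H+\frac12$. We use the power rule
\[
\D^{\gamma}t^{\beta}=\frac{\Gamma(\beta+1)}{\Gamma(\beta+1-\gamma)}t^{\beta-\gamma},\qquad \beta>-1.
\]
Take $\beta=-\gamma$, which is admissible since $-H-\frac12>-1$. Then $\Gamma(\beta+1-\gamma)=\Gamma(-2H)=-\Gamma(1-2H)/(2H)$, and we get
\[
\D^{\gamma}\varphi(t)=-\frac{2HA\,\Gamma(\frac12-H)}{\Gamma(1-2H)}\,t^{-2H-1}.
\]
This is consistent with $I^{1-\gamma}\varphi=A\frac{\Gamma(\frac12-H)}{\Gamma(1-2H)}t^{-2H}$. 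Since $\kappa\ge0$, we have $-\kappa\varphi-\frac{\nu^2}{2}\varphi^2\le-\frac{\nu^2A^2}{2}t^{-2H-1}$. The hypothesis on $A$ is exactly $\frac{\nu^2A^2}{2}>\frac{2HA\Gamma(\frac12-H)}{\Gamma(1-2H)}$, so
\[
\D^\gamma\varphi(t)+\kappa\varphi(t)+\frac{\nu^2}{2}\varphi(t)^2>0\quad\text{for all } t>0.
\]
This strict margin is what the threshold on $A$ buys.

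\textbf{Step 2 (behaviour near $0$ and first crossing).} From \eqref{eq:riccati-y} and $y_\lambda\ge0$ we get $0\le y_\lambda(t)\le\lambda K_H(t)=\lambda t^{H-\frac12}/\Gamma(H+\frac12)$. Since $t^{H-\frac12}=o(t^{-H-\frac12})$ as $t\downarrow0$, there is $t_0(\lambda)>0$ with $y_\lambda<\varphi$ on $(0,t_0]$. I will use that $y_\lambda$ is continuous on $(0,T]$, indeed smooth there by the standard regularity theory for Riccati--Volterra equations with the fractional kernel. Suppose the claim fails and let
\[
\tau:=\inf\{t>0:\ y_\lambda(t)\ge\varphi(t)\}.
\]
Then $\tau\ge t_0>0$, $y_\lambda(\tau)=\varphi(\tau)$, and $u:=\varphi-y_\lambda>0$ on $(0,\tau)$ with $u(\tau)=0$.

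\textbf{Step 3 (fractional extremum principle).} Applying $\D^\gamma$ to \eqref{eq:riccati-y} gives $\D^\gamma y_\lambda=-\kappa y_\lambda-\frac{\nu^2}{2}y_\lambda^2$ on $(0,T]$, as noted before the statement. Combined with Step 1 and $y_\lambda(\tau)=\varphi(\tau)$, this yields
\[
\D^\gamma u(\tau)=\D^\gamma\varphi(\tau)+\kappa\varphi(\tau)+\frac{\nu^2}{2}\varphi(\tau)^2>0.
\]
On the other hand, a Marchaud-type representation of $\D^\gamma u(\tau)$ at a point where the non-negative function $u$ attains the value $0$ should give
\[
\D^\gamma u(\tau)=-\frac{\gamma}{\Gamma(1-\gamma)}\int_0^\tau\frac{u(s)}{(\tau-s)^{\gamma+1}}\dd s+(\text{boundary term at }0)\;<0.
\]
These two signs contradict each other, so $y_\lambda\le\varphi$ on $(0,T]$. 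Since $A$ and $\lambda$ are arbitrary, and $T$ is arbitrary with the uniqueness of Lemma \ref{lem:laplace-resolvent} making the solutions consistent across horizons, this proves \eqref{eq:barrier-bound}.

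\textbf{Main obstacle.} The delicate point is justifying Step 3 rigorously, because both $\varphi$ and $y_\lambda$ are singular at $0$ with different boundary data. We have $I^{1-\gamma}y_\lambda(0+)=\lambda$, by the identity $\mathcal R_H*y_\lambda=\lambda-\int(\dots)$ from the proof of Lemma \ref{lem:laplace-resolvent}, whereas $I^{1-\gamma}\varphi(t)\sim t^{-2H}\to\infty$. So the naive Marchaud formula on $(0,\tau]$ has boundary contributions at $0$. I would handle this by splitting at $t_0/2$. First, write $\D^\gamma_0 u(\tau)=\D^\gamma_{t_0/2}u(\tau)+J(\tau)$, where $J$ is the derivative in $\tau$ of the history integral $\frac{1}{\Gamma(1-\gamma)}\int_0^{t_0/2}(\tau-s)^{-\gamma}u(s)\dd s$. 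This derivative is $-\frac{\gamma}{\Gamma(1-\gamma)}\int_0^{t_0/2}(\tau-s)^{-\gamma-1}u(s)\dd s<0$ because $u>0$ there, and it is well defined since $u$ is integrable near $0$. Second, on $[t_0/2,\tau]$ both functions are smooth, so the Marchaud formula for $\D^\gamma_{t_0/2}$ applies. Its boundary term is $u(\tau)(\tau-t_0/2)^{-\gamma}/\Gamma(1-\gamma)=0$, and its integral term is $\le0$. If this splitting turns out awkward, the fallback is to run the same comparison directly on the integral form, using a resolvent-positivity comparison principle for Volterra equations with the completely monotone kernel $K_H$, starting from time $t_0$.
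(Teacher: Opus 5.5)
Your proposal is correct and follows essentially the same route as the paper. Both use the barrier $At^{-H-\frac12}$ with the same power-rule computation, and the same first-contact argument started from $0\le y_\lambda\le\lambda K_H$. Both also split $\D_0^{H+\frac12}$ at an interior point into a strictly negative history term and an endpoint extremum term. The paper gets the sign of the endpoint term by citing the Riemann--Liouville extremum principle, whereas you read it off the Marchaud representation, which is the proof of that principle. Only $C^1$ regularity of $y_\lambda$ on $(0,\infty)$ is needed, not smoothness, and the paper establishes it in Lemma~\ref{lem:prelim-y}, so your fallback is unnecessary.
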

The proof of this key estimate is postponed in Section \ref{subsec:barrier}. We now provide the proof of Theorem \ref{thm:heston-main}.

\begin{proof}[Proof of Theorem \ref{thm:heston-main}]
We fix
$A>\frac{4H\Gamma(\frac12-H)}{\nu^2\Gamma(1-2H)}$. Lemma
\ref{lem:barrier} gives, for every $\lambda>0$,
\[
\int_0^T y_\lambda(s)\dd s\leq A\int_0^T s^{-H-\frac12}\dd s=\frac{A}{\frac12-H}T^{\frac12-H}, \]
and
\[
(\mathcal R_H*y_\lambda)(T)\leq \frac{A}{\Gamma(\frac12-H)}\int_0^T(T-s)^{-H-\frac12}s^{-H-\frac12}\dd s.
\]
The beta-function identity gives
\[
\int_0^T(T-s)^{-H-\frac12}s^{-H-\frac12}\dd s=T^{-2H}\frac{\Gamma(\frac12-H)^2}{\Gamma(1-2H)},
\]
and therefore
\[
(\mathcal R_H*y_\lambda)(T)\leq A\frac{\Gamma(\frac12-H)}{\Gamma(1-2H)}T^{-2H}.
\]
Lemma \ref{lem:laplace-resolvent} implies, uniformly in $\lambda>0$,
\[
\E[e^{-\lambda V_T}]
\geq
\exp\left(
-v_0A\frac{\Gamma(\frac12-H)}{\Gamma(1-2H)}T^{-2H}
-b\frac{A}{\frac12-H}T^{\frac12-H}
\right)
>0.
\]
Since $V_T\geq0$, one has $e^{-\lambda V_T}\to\1_{\{V_T=0\}}$ as $\lambda\to\infty$, and dominated convergence gives
\[
\Prob(V_T=0)
=\lim_{\lambda\to\infty}\E[e^{-\lambda V_T}]
\geq
\exp\left(
-v_0A\frac{\Gamma(\frac12-H)}{\Gamma(1-2H)}T^{-2H}
-b\frac{A}{\frac12-H}T^{\frac12-H}
\right)
>0.
\]
Since this holds for every $A>\frac{4H\Gamma(\frac12-H)}{\nu^2\Gamma(1-2H)}$, letting $A\downarrow \frac{4H\Gamma(\frac12-H)}{\nu^2\Gamma(1-2H)}$
gives \eqref{eq:lower-bound-main}. 
\end{proof}

\subsection{Proof of Lemma \ref{lem:barrier}}
\label{subsec:barrier}
This is a purely deterministic analytic result based on the fractional maximum principle. The proof of Lemma \ref{lem:barrier} relies on the following two auxiliary results. Lemma \ref{lem:prelim-y} is proved afterwards, while Lemma \ref{lem:extremum} is a standard endpoint maximum principle for Riemann--Liouville derivatives.

\begin{lemma}\label{lem:prelim-y}
For every $\lambda>0$, the function $y_\lambda$ satisfies the following
properties.
\begin{enumerate}[label=(\roman*),ref=(\roman*)]

\myitem{item:y-lambda-origin} One has
\begin{equation}\label{eq:small-origin}
\frac{y_\lambda(t)}{t^{-H-\frac12}}\longrightarrow0, \quad t\downarrow0,
\end{equation}

\myitem{item:y-lambda-fractional-ode} One has
$y_\lambda\in C^1_{\mathrm{loc}}((0,\infty))$ and
\begin{equation}\label{eq:fractional-ode-y}
\forall t>0,\quad
\D^{H+\frac12} y_\lambda(t)
=
-\kappa y_\lambda(t)-\frac{\nu^2}{2} y_\lambda(t)^2.
\end{equation}
\end{enumerate}
\end{lemma}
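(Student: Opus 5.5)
The plan is to work directly from the Riccati--Volterra equation \eqref{eq:riccati-y}, using only that $y_\lambda\ge0$ is locally integrable and that $\lambda K_H$ is the leading term near the origin.

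\textbf{Part (i).} Since $y_\lambda\ge0$ and $K_H\ge0$, the convolution term in \eqref{eq:riccati-y} is non-negative. Hence
\[
0\le y_\lambda(t)\le \lambda K_H(t)=\lambda t^{H-\frac12}/\Gamma(H+\tfrac12),
\]
which gives
\[
\frac{y_\lambda(t)}{t^{-H-\frac12}}\le \frac{\lambda t^{2H}}{\Gamma(H+\frac12)}\to0 .
\]
So (i) follows at once from non-negativity. This upper bound also shows $y_\lambda\le \lambda K_H\in L^1_{\mathrm{loc}}$, and that $y_\lambda^2\le \lambda^2K_H^2$ is locally integrable because $2H-1>-1$.

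\textbf{Part (ii), the fractional equation.} Set $F:=\kappa y_\lambda+\frac{\nu^2}{2}y_\lambda^2\in L^1_{\mathrm{loc}}$. Convolving \eqref{eq:riccati-y} with $\mathcal R_H$ and using $\mathcal R_H*K_H=1$, I obtain, as in the proof of Lemma \ref{lem:laplace-resolvent},
\[
(\mathcal R_H*y_\lambda)(t)=\lambda-\int_0^tF(s)\dd s .
\]
This is an absolutely continuous function of $t$, with a.e. derivative $-F(t)$. Since $\mathcal R_H*y_\lambda=\Gamma(\frac12-H)^{-1}\int_0^t(t-s)^{-H-\frac12}y_\lambda(s)\dd s$, this is exactly $\D^{H+\frac12}y_\lambda=-F$ almost everywhere. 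Once $y_\lambda$ is shown to be continuous on $(0,\infty)$, $F$ is continuous there, and the identity holds for every $t>0$.

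\textbf{Part (ii), regularity: the main obstacle.} The hard step is showing $y_\lambda\in C^1_{\mathrm{loc}}((0,\infty))$, since the kernel is singular at $0$. First I get continuity on $(0,\infty)$. The term $\lambda K_H$ is smooth there. The convolution $K_H*F$ is continuous on $(0,\infty)$ because $F\in L^1_{\mathrm{loc}}$ with $F\le C(K_H+K_H^2)$, a bound integrable near $0$. Uniform continuity of translations in $L^1$ of the kernel, plus local boundedness of $F$ away from $0$, then gives continuity.

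For $C^1$, fix $\varepsilon>0$ and split $(K_H*F)(t)$ into integrals over $[0,\varepsilon]$ and $[\varepsilon,t]$. For $t>2\varepsilon$, the first piece $\int_0^\varepsilon K_H(t-s)F(s)\dd s$ is smooth in $t$. The second piece is $\int_0^{t-\varepsilon}K_H(u)F(t-u)\dd u$ with $F$ continuous on $[\varepsilon,\infty)$. Here I would use a bootstrap: continuity of $F$ gives Hölder continuity of order $H+\frac12$ of this piece. Iterating (Hölder regularity of $y$ gives Hölder regularity of $F$, which gives better regularity of $K_H*F$) raises the exponent by $H+\frac12$ at each step until it exceeds $1$. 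At that point I differentiate under the integral, writing
\[
\frac{\dd}{\dd t}\int_\varepsilon^tK_H(t-s)F(s)\dd s = K_H(t-\varepsilon)F(\varepsilon)+\int_\varepsilon^t K_H(t-s)F'(s)\dd s .
\]
This requires $F$ to be locally Lipschitz on $(\varepsilon,\infty)$, which is obtained at the final bootstrap step.

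If the bootstrap gets messy, the alternative is to invoke the standard regularity theory for Volterra equations with locally Lipschitz nonlinearity and completely monotone kernel (Gripenberg--Londen--Staffans, Chapter 12), applied on $[\varepsilon,T]$. There the equation has a smooth forcing term, since the contribution of $[0,\varepsilon]$ is smooth in $t$, and this yields $C^1$ regularity on $(\varepsilon,T]$ for every $\varepsilon>0$.
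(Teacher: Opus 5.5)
Your proposal is correct and follows the paper's proof essentially step for step: the bound $0\le y_\lambda\le\lambda K_H$ gives (i), convolving \eqref{eq:riccati-y} with $\mathcal R_H$ and differentiating $\lambda-\int_0^tF$ gives \eqref{eq:fractional-ode-y}, and the $C^1_{\mathrm{loc}}$ regularity comes from the smoothing of the Riemann--Liouville integral of order $H+\frac12$. The only difference is that the paper cites the Abel regularization mapping $L^\infty_{\mathrm{loc}}\to C^{H+\frac12}_{\mathrm{loc}}\to C^{2H+1}_{\mathrm{loc}}$, so two steps suffice because $2H+1>1$, whereas you sketch this bootstrap by hand using the splitting at $\varepsilon$.
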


\begin{lemma}
\label{lem:extremum}
Let $0<H<1/2$, $t_0<t_1$, and $f\in C^1([t_0,t_1])$. If
\[
f(t)\le 0,\quad t\in[t_0,t_1],
\quad\text{and}\quad
f(t_1)=0,
\]
then
\[
\D_{t_0}^{H+\frac12} f(t_1)\ge0.
\]
\end{lemma}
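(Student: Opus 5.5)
We prove Lemma \ref{lem:extremum} by writing the Riemann--Liouville derivative in its Caputo form and using the sign information on $f$ directly. Set $\beta:=H+\frac12\in(\frac12,1)$. Since $f\in C^1([t_0,t_1])$, an integration by parts in the fractional integral gives the classical identity, for $t\in(t_0,t_1]$,
\[
\D_{t_0}^{\beta}f(t)
=
\frac{f(t_0)(t-t_0)^{-\beta}}{\Gamma(1-\beta)}
+
\frac{1}{\Gamma(1-\beta)}\int_{t_0}^t(t-s)^{-\beta}f'(s)\dd s .
\]
Both terms are well defined because $(t-s)^{-\beta}$ is integrable.

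The key step is a second integration by parts in the Caputo integral at $t=t_1$. We use the function $f(s)-f(t_1)=f(s)$, which vanishes at $s=t_1$. Since $f$ is $C^1$, we have $f(s)=O(t_1-s)$ near $t_1$, so $(t_1-s)^{-\beta}f(s)\to0$ as $s\uparrow t_1$; this is where $\beta<1$ enters. Integrating by parts on $[t_0,t_1-\varepsilon]$ and letting $\varepsilon\downarrow0$ gives
\[
\int_{t_0}^{t_1}(t_1-s)^{-\beta}f'(s)\dd s
=
-f(t_0)(t_1-t_0)^{-\beta}
-\beta\int_{t_0}^{t_1}(t_1-s)^{-\beta-1}f(s)\dd s .
\]
The last integral converges absolutely, again because $|f(s)|\le C(t_1-s)$ and $\beta<1$.

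Substituting into the Caputo representation, the boundary terms in $f(t_0)$ cancel, and we obtain
\[
\D_{t_0}^{\beta}f(t_1)
=
-\frac{\beta}{\Gamma(1-\beta)}\int_{t_0}^{t_1}(t_1-s)^{-\beta-1}f(s)\dd s .
\]
Since $f\le0$ on $[t_0,t_1]$ and $\Gamma(1-\beta)>0$, the right-hand side is non-negative, which is the claim.

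The main obstacle is justifying the first identity, namely that the Riemann--Liouville derivative exists at $t_1$ and equals the Caputo expression plus the boundary term. To handle it, we write $\int_{t_0}^t(t-s)^{-\beta}f(s)\dd s=\int_0^{t-t_0}u^{-\beta}f(t-u)\dd u$ and differentiate under the integral sign. This is legitimate because $f'$ is bounded and $u^{-\beta}$ is integrable. The derivative equals
\[
(t-t_0)^{-\beta}f(t_0)+\int_0^{t-t_0}u^{-\beta}f'(t-u)\dd u,
\]
which, after the change of variables $s=t-u$, is exactly the stated identity.
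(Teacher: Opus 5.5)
Your proof is correct. Differentiating $\int_0^{t-t_0}u^{-\beta}f(t-u)\dd u$ with the Leibniz rule (a left derivative at $t=t_1$, which is the only meaningful one for $f\in C^1([t_0,t_1])$ and coincides with the two-sided derivative in the paper's application) and then integrating by parts using $f(t_1)=0$ gives $\D_{t_0}^{\beta}f(t_1)=-\frac{\beta}{\Gamma(1-\beta)}\int_{t_0}^{t_1}(t_1-s)^{-\beta-1}f(s)\dd s\ge0$. The paper does not prove the lemma itself but cites the Riemann--Liouville extremum principle of Al-Refai and Al-Refai--Luchko, whose proof is essentially this Caputo-plus-integration-by-parts computation, so your proposal makes that cited step self-contained.
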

This elementary endpoint lemma is a special case of the extremum principle for
Riemann--Liouville derivatives; see
\citet{AlRefai2012,AlRefaiLuchko2014}.

We now provide the proof of Lemma \ref{lem:barrier}.
\begin{proof}[Proof of Lemma \ref{lem:barrier}]
We define $\bar y(t):=At^{-H-\frac12}$ for $t \in (0,T]$. A direct computation gives
\[
\D^{H+\frac12}\bar y(t)
=
-A\frac{2H\Gamma(\frac12-H)}{\Gamma(1-2H)}t^{-2H-1}.
\]
Since $A>\frac{4H\Gamma(\frac12-H)}{\nu^2\Gamma(1-2H)}$, we have, for all $t>0$,
\begin{equation}\label{eq:strict-super}
D^{H+\frac12}\bar y(t)
-\left(-\kappa\bar y(t)-\frac{\nu^2}{2}\bar y(t)^2\right)=
A\left(
\frac{\nu^2}{2}A
-\frac{2H\Gamma(\frac12-H)}{\Gamma(1-2H)}
\right)t^{-2H-1}
+\kappa At^{-H-\frac12}
>0.
\end{equation}
The limit \eqref{eq:small-origin} implies $y_\lambda(t)-\bar y(t)<0$ for all sufficiently small $t>0$. If there were a time $t>0$ such that $y_\lambda(t)>\bar y(t)$, continuity would yield a first contact time $t_*>0$ such that
\[
w(t):=y_\lambda(t)-\bar y(t)<0\quad\text{for }0<t<t_*,
\quad
w(t_*)=0.
\]
For every $\varepsilon\in(0,t_*)$, Lemma \ref{lem:prelim-y} gives $w\in C^1([\varepsilon,t_*])$, and Lemma \ref{lem:extremum} applied to $w$ on $[\varepsilon,t_*]$ gives
$\D_\varepsilon^{H+\frac12} w(t_*)\geq0.$
We have
\begin{align*}
\D_0^{H+\frac12} w(t_*)
&=
\frac{1}{\Gamma(\frac12-H)}
\frac{\dd}{\dd t}\bigg|_{t=t_*}
\int_0^t (t-s)^{-H-\frac12}w(s)\dd s \\
&=
\D_\varepsilon^{H+\frac12} w(t_*)
+
\frac{1}{\Gamma(\frac12-H)}
\frac{\dd}{\dd t}\bigg|_{t=t_*}
\int_0^\varepsilon (t-s)^{-H-\frac12}w(s)\dd s \\
&=
\D_\varepsilon^{H+\frac12} w(t_*)
-\frac{H+\frac12}{\Gamma(\frac12-H)}
\int_0^\varepsilon(t_*-s)^{-H-\frac32}w(s)\dd s .
\end{align*}
Since $w(s)<0$ on $(0,\varepsilon]$, we obtain $\D_0^{H+\frac12} w(t_*)\geq0$. On the other hand, \eqref{eq:fractional-ode-y}, the equality $y_\lambda(t_*)=\bar y(t_*)$, and \eqref{eq:strict-super} give
\[
\D_0^{H+\frac12} w(t_*)=\left(-\kappa\bar y(t_*)-\frac{\nu^2}{2}\bar y(t_*)^2\right)-\D^{H+\frac12}\bar y(t_*)<0.
\]
This contradiction proves the desired result.
\end{proof}

We now provide the proof of Lemma \ref{lem:prelim-y}.

\begin{proof}[Proof of Lemma \ref{lem:prelim-y}]
The non-negativity of $y_\lambda$ follows from Lemma
\ref{lem:laplace-resolvent}. Since the convolution term in
\eqref{eq:riccati-y} is non-positive, it follows that
\[
\forall t>0, \quad 0\le y_\lambda(t)\le \lambda K_H(t).
\]
Hence,
\[
y_\lambda(t)t^{H+\frac12}
\longrightarrow0,
\quad t\downarrow0,
\]
and hence \eqref{eq:small-origin}. Writing
\[
y_\lambda(t)
=
\lambda K_H(t)
+
\frac1{\Gamma(H+\frac12)}
\int_0^t(t-s)^{H-\frac12}
\left(-\kappa y_\lambda(s)-\frac{\nu^2}{2}y_\lambda(s)^2\right)\dd s,
\]
we obtain the local regularity from the standard Abel regularization theorem:
the Riemann--Liouville integral of order $H+\frac12$ maps
$L^\infty_{\mathrm{loc}}$ into $C^{H+\frac12}_{\mathrm{loc}}$, and maps
$C^{H+\frac12}_{\mathrm{loc}}$ into $C^{2H+1}_{\mathrm{loc}}$; see
\citet[Theorem~4.2.1, p.~70]{GorenfloVessella1991} or
\citet[Theorem~3.1]{CarloneFiorenzaTentarelli2017}. Since $K_H$ is smooth
away from the origin and $y_\lambda$ is locally bounded on $(0,\infty)$, it
thus follows that
$y_\lambda\in C^{2H+1}_{\mathrm{loc}}((0,\infty)).$ As $2H+1>1$, this gives
$y_\lambda\in C^1_{\mathrm{loc}}((0,\infty)).$
Applying the Riemann--Liouville fractional integral of order $\frac12-H$ to
\eqref{eq:riccati-y}, using that its application to $K_H$ is equal to $1$,
gives
\[
\frac1{\Gamma(\frac12-H)}
\int_0^t(t-s)^{-H-\frac12}y_\lambda(s)\dd s
=
\lambda
+
\int_0^t
\left(
-\kappa y_\lambda(s)-\frac{\nu^2}{2}y_\lambda(s)^2
\right)\dd s.
\]
Differentiating yields
\[
\forall t>0, \quad \D^{H+\frac12}y_\lambda(t)
=
-\kappa y_\lambda(t)-\frac{\nu^2}{2}y_\lambda(t)^2.
\]
\end{proof}

\section{Conclusion}

We have solved two open problems from the rough-volatility literature. This
suggests several directions for future research. A first question is whether the
Feller-type invariance and boundary-exit criteria of
\citet{BondiPulido2024}, established for smooth-kernel Volterra equations, can
be extended to rough kernels. Such an extension could in particular provide a
structural explanation for the boundary attainment phenomenon in rough Heston. It would also be interesting to understand the stability of
Feller-type criteria when passing from smooth kernels to rough kernels. As
suggested by the Volterra--CIR heuristics discussed above, the limiting
rough-kernel regime may turn a non-attainment criterion for smooth kernels into
boundary attainment. 
Finally, a related and substantially harder open problem is pathwise uniqueness
for the rough Heston equation. As in the classical Heston/CIR case, the key issue should be the occupation
behaviour at the zero boundary, namely whether the process leaves zero
immediately after hitting it or spends a non-negligible amount of time there.

\appendix
\renewcommand{\thesection}{\Alph{section}}
\renewcommand{\thesubsection}{\arabic{subsection}}
\renewcommand{\theequation}{\thesection.\arabic{equation}}

\section{Gassiat's Osgood criterion}
\label{subsec:app-osgood-supercritical}

We record the strictly supercritical moment criterion in the form used in the
main text, directly under the standing kernel and volatility assumptions. Set
\begin{equation}\label{eq:kernel-primitive}
\forall t\in[0,T], \quad \overline K(t):=\int_0^t K(s)\dd s,
\end{equation}
and define its generalized inverse by
\begin{equation}\label{eq:kernel-primitive-generalized-inverse}
\Phi_K(u)
:=
\inf\bigl\{t\in[0,T]:\overline K(t)\ge u\bigr\},
\quad u\ge0,
\end{equation}
with the convention $\inf\varnothing=\infty$. Assumption
\ref{ass:kernel} implies that $\overline K(t)>0$ for every $t>0$. Indeed,
associativity and \eqref{eq:resolvent-identity} give
\begin{equation}\label{eq:kernel-primitive-resolvent-bound}
t
=
(\mathcal R_K*\overline K)(t)
\le
\mathcal R_K([0,T])\overline K(t),
\quad t\in[0,T].
\end{equation}

\begin{lemma}[Gassiat's Osgood criterion for strictly supercritical moments]
\label{lem:osgood-supercritical-explosion}
Let $\rho\in(-1,0)$ and $p>p_\rho$, and suppose that Assumptions
\ref{ass:kernel} and \ref{ass:volatility} are in force. Assume that there exist
\[
T_0\in(0,T],
\quad
\vartheta
\in
\left(
0,\nu\bigl(\sqrt{p(p-1)}+p\rho\bigr)
\right),
\quad
A>0,
\]
and a continuous, non-decreasing, locally Lipschitz function
$b:[A,\infty)\to(0,\infty)$ such that
\begin{equation}\label{eq:controlled-volatility-tail-lower-bound}
\vartheta\sigma(t,x)\ge b(x),
\quad
(t,x)\in[0,T_0]\times[A,\infty).
\end{equation}
If
\begin{equation}\label{eq:kernel-osgood-condition}
\int_A^\infty
\Phi_K\left(\frac{x}{b(x)}\right)
\frac{\dd x}{x}
<\infty,
\end{equation}
then
\[
\E[S_{T_0}^p]=\infty.
\]
\end{lemma}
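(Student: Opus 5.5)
The plan is to adapt Gassiat's Riemann--Liouville argument. Explosion will come from a Gaussian "tube" event of small but controlled probability, on which the volatility factor blows up in finite time through positive feedback. Finiteness of that blow-up time is exactly what the Osgood condition \eqref{eq:kernel-osgood-condition} guarantees.

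\textbf{Step 1: reduce to a tilted feedback equation.} As in Step 1 of the proof of Theorem \ref{prop:bergomi-critical-kernels}, conditioning on $B$ gives
\[
\E[S_{T_0}^p]/S_0^p=\E\Big[\exp\Big(p\rho\int_0^{T_0}\sigma\dd B+\tfrac12(p^2(1-\rho^2)-p)\int_0^{T_0}\sigma^2\dd t\Big)\Big].
\]
Since $p>p_\rho$, the coefficient $c_p:=\frac12(p^2(1-\rho^2)-p)$ is positive. I would then change measure with a Cameron--Martin shift $h\in L^2$ of $B$ chosen in feedback form, $h_t=\theta\sigma(t,Y_t)$ (localized at $\tau_R$ as in Lemma \ref{lem:girsanov}). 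The resulting exponent is
\[
(p\rho+\theta)\int\sigma\dd\widetilde B+\Big(c_p+p\rho\theta+\tfrac12\theta^2\Big)\int\sigma^2\dd t.
\]
This choice should be arranged so that the quadratic coefficient stays nonnegative while $\theta>0$. That is possible precisely when $\theta$ lies up to the larger root $-p\rho+\sqrt{p^2\rho^2-2c_p}=-p\rho+\sqrt{p(p-1)}$ of $c_p+p\rho\theta+\tfrac12\theta^2$. Under the new measure, $Y$ solves a positive feedback Volterra equation,
\[
Y=y_0+\nu K*\dd\widetilde B+\nu\theta' K*\sigma(\cdot,Y),
\]
where $\theta'=\theta+p\rho$ may be taken arbitrarily close to $\sqrt{p(p-1)}+p\rho$. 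This is where the admissible range $\vartheta<\nu(\sqrt{p(p-1)}+p\rho)$ enters. The stochastic integral term must be handled by conditional Jensen or by restricting to an event where it is bounded below.

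\textbf{Step 2: deterministic blow-up.} On an event where $\widetilde B$ stays in a small sup-norm tube (positive probability, since $\supp\mu=C_0$ by Lemma \ref{lem:volterra-gaussian-cm}), $Y$ dominates the solution $u$ of
\[
u(t)\ge A+\int_0^tK(t-s)\,b(u(s))\dd s.
\]
This uses \eqref{eq:controlled-volatility-tail-lower-bound}, $K\ge0$, monotonicity of $b$, and a comparison principle for Volterra equations with nonnegative kernels. To prove blow-up before $T_0$, I would use a doubling scheme. If $u\ge x$ on $[t,T_0]$, then after an extra time $s$ one has $u\ge x+\overline K(s)b(x)$, which reaches $2x$ once $\overline K(s)\ge x/b(x)$, i.e.\ after time $\Phi_K(x/b(x))$. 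Summing over $x=2^kA$, the total time is bounded by a quantity comparable to $\int_A^\infty\Phi_K(x/b(x))\dd x/x<\infty$. After possibly raising $A$, this total time is below $T_0$, so $\int_0^{T_0}\sigma^2=\infty$ on that event.

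\textbf{Step 3: conclude.} On the tube event, the localized moments $\E[S_{\tau_R}^p]$ are bounded below by the probability of the event times the exponential of a divergent positive quadratic term. This forces $\E[S_{T_0}^p]=\infty$, using Fatou's lemma and the supermartingale comparison on $[\tau_R,T_0]$.

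\textbf{Main obstacle.} The hard part is Step 1. One has to make the Girsanov tilt rigorous when the tilted solution explodes, and control the signed stochastic integral $(p\rho+\theta)\int\sigma\dd\widetilde B$. I would first try choosing $\theta=-p\rho+\sqrt{p(p-1)}-\epsilon$, which leaves a strictly positive quadratic coefficient. The martingale term is then bounded below by $-\frac{\epsilon'}{2}\int\sigma^2$ minus a constant on an event of probability at least $1/2$, via exponential martingale inequalities, with $R\to\infty$ handled at the end.
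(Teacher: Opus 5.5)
Your architecture is the adaptation of Gassiat's argument that the paper invokes without proof: a Girsanov tilt to a positive-feedback Volterra equation, positive tube probabilities from \eqref{eq:support-C0}, and level-crossing blow-up with $\overline K$ and $\Phi_K$. Two steps, however, fail as written.

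\textbf{Step 3 (passage from $\tau_R$ to $T_0$).} Neither Fatou's lemma nor the supermartingale property of $S$ gives a \emph{lower} bound on $\E[S_{T_0}^p]$. Fatou gives $\E[S_{T_0}^p]\le\liminf_R\E[S_{\tau_R}^p]$. The inequality $\E[S_{T_0}\mid\mathcal F_{\tau_R}]\le S_{\tau_R}$ gives no comparison between $\E[S_{T_0}^p]$ and $\E[S_{\tau_R}^p]$. Fatou belongs on the tilted side, where it gives $\liminf_R\E[S_{\tau_R}^p]=\infty$. To pass to $T_0$ you need $\E[S_{\tau_R}^p]\le\E[S_{T_0}^p]$, i.e.\ that $S^p$ is a submartingale, so $S$ must be a \emph{true} martingale. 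This ingredient is missing from your plan, though it is available. Conditioning on $B$ gives $\E[S_{T_0}]=S_0\E[\mathcal E(\rho M)_{T_0}]$ with $M=\int\sigma(t,Y_t)\dd B_t$. The $\rho$-tilt turns $Y$ into \eqref{eq:negative-feedback} with $a=-\rho$, which is globally solvable by Lemma~\ref{lem:well-posed-general}. Hence $\E[\mathcal E(\rho M)_{T_0}]\ge\lim_R\Q^{\rho,R}(\tau_R=T_0)=1$. Alternatively, since $c_p>0$ and $M$ is a true martingale ($Y$ is Gaussian and $\sigma$ grows at most exponentially), conditional Jensen gives $\E[e^{p\rho M_{T_0}+c_p\langle M\rangle_{T_0}}\mid\mathcal F_{\tau_R}]\ge e^{p\rho M_{\tau_R}+c_p\langle M\rangle_{\tau_R}}$.

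\textbf{Step 2 (doubling scheme).} From $u\ge x$ on $[t,T_0]$ you only get $u(t+s)\ge A+b(x)\overline K(s)$, not $x+b(x)\overline K(s)$. The contribution of $[0,t]$ fades for decreasing kernels such as $K_H$. With this restart, reaching $2x$ costs $\Phi_K((2x-A)/b(x))$. On $[x_k/2,x_k]$, monotonicity of $b$ only gives $x/b(x)\ge x_k/(2b(x_k))$. So comparing the sum with \eqref{eq:kernel-osgood-condition} needs $\Phi_K(4u)\le C\Phi_K(u)$ near $0$. This holds for $K_H$, where $\Phi_K$ is a power. It is harmless for the lognormal response, where $\Phi_K(u)\le\mathcal R_K([0,T])\,u$ by \eqref{eq:kernel-primitive-resolvent-bound} already makes the sum finite. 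It is not implied by Assumption~\ref{ass:kernel} in general, and the paper's one-line adaptation does not address it either.

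\textbf{Further repairs.} In Step 1, take $\dd\Q/\dd\Prob=\mathcal E(\theta M)_{\tau_R}$, $\widetilde B=B-\theta\int_0^{\cdot\wedge\tau_R}\sigma\dd t$ and $\widetilde M=\int\sigma\dd\widetilde B$. The exponent is then
\[
(p\rho-\theta)\widetilde M_{\tau_R}+\tfrac12\bigl(p(p-1)-(\theta-p\rho)^2\bigr)\langle M\rangle_{\tau_R},
\]
and the feedback is $+\nu\theta K*\sigma$. Every $\theta\in[\vartheta/\nu,\,p\rho+\sqrt{p(p-1)})$ therefore gives a strictly positive quadratic coefficient and $\nu\theta\sigma\ge b$ on $[A,\infty)$. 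Your coefficient $c_p+p\rho\theta+\frac12\theta^2$ is wrong. Also $p^2\rho^2-2c_p=p-p^2(1-2\rho^2)$, which equals $p(p-1)$ only at $p=p_\rho$. Finally, $\theta'=\theta+p\rho$ contradicts your stated root; only your final feedback range is right.

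For the martingale term, an event of probability at least $\frac12$ is useless, because the tube event has small probability. Either take $c$ large in $\Prob(\sup_t(N_t-\frac{\gamma}{2}\langle N\rangle_t)\ge c)\le e^{-\gamma c}$, or use the following argument. Blow-up at $\zeta\le T_0$ forces $\int_0^\zeta\sigma^2\dd t=\infty$, because $|(K*f)(t)|\le\|K\|_{L^2}\|f\|_{L^2(0,t)}$ and $\widetilde G$ is bounded. By Dambis--Dubins--Schwarz, $\widetilde M_t/\langle\widetilde M\rangle_t\to0$ as $t\uparrow\zeta$, so the exponent tends to $+\infty$ a.s.\ on the whole blow-up event.

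The tube must be for $\widetilde G=\nu K*\dd\widetilde B$, since that is what $\supp\mu=C_0([0,T])$ controls. Smallness of $\widetilde B$ does not make $\widetilde G$ small in sup norm for singular $K$. The tube must also surround a path that lifts $Y$ from $y_0$ to a large level $x_0$ within a short time $t_1$. This is needed because $\vartheta\sigma\ge b$ holds only on $[A,\infty)$ and $\sigma$ may be negative below $A$. The memory of $[0,t_1]$ is controlled via $\int_{t-t_1}^tK\le\sqrt{t_1}\|K\|_{L^2}$, and a first-crossing argument then keeps $Y\ge x_0$.
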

\citet[Theorem~2]{Gassiat2019} proves the result for the
Riemann--Liouville kernel. Lemma
\ref{lem:osgood-supercritical-explosion} is the immediate adaptation to
kernels satisfying Assumption \ref{ass:kernel}: the support argument
follows from \eqref{eq:support-C0}, and the deterministic level-crossing
argument applies with $\overline K$ and $\Phi_K$. We therefore omit the
proof.

\section{Technical estimates for the Volterra--Bergomi model}
\label{subsec:app-volterra-gaussian-cm}

In this appendix, we prove the technical lemmas used in Section \ref{sec:thm1}
to establish Theorem \ref{thm:bergomi-main}.
\begin{proof}[Proof of Lemma \ref{lem:volterra-gaussian-cm}]
We fix $f\in L^2([0,T])$. For $0\le t\le t+h\le T$,
\[
\begin{aligned}
|(K*f)(t+h)-(K*f)(t)|
&\le
\left(\int_0^h K(r)^2\dd r\right)^{1/2}\|f\|_{L^2}
+
\left(\int_0^t |K(r+h)-K(r)|^2\dd r\right)^{1/2}\|f\|_{L^2}.
\end{aligned}
\]
Assumption \myref{item:kernel-modulus} therefore gives a continuous
representative of $K*f$, vanishing at zero. This proves
\myref{item:volterra-deterministic-continuity}.

For $G_t=\nu\int_0^tK(t-s)\dd B_s$, It\^o's isometry and the same estimate give
\[
\E[|G_{t+h}-G_t|^2]\le C h^{2\gamma_K}.
\]
Since $G_{t+h}-G_t$ is Gaussian, all higher moments satisfy the corresponding
power bounds. Kolmogorov's criterion yields a continuous version, with
$G_0=0$. This proves \myref{item:volterra-gaussian-continuity}.

The preceding continuity shows that the map
\[
f\longmapsto\nu K*f
\]
from $L^2([0,T])$ to $C_0([0,T])$ is continuous and linear. The
Cameron--Martin space of the Gaussian image is
$\{\nu K*f:f\in L^2([0,T])\}.$
Moreover, if $K*f=0$, then convolving with $\mathcal R_K$ gives
\[
1*f=\mathcal R_K*(K*f)=0,
\]
hence $f=0$ a.e. Thus the Cameron--Martin norm is
$\|\nu K*f\|_{\cH_G}=\|f\|_{L^2([0,T])}$.

It remains to identify the support. The support of a Gaussian measure is the
closure of its Cameron--Martin space in the ambient Banach space. For every
$\varphi\in C^1([0,T])$ with $\varphi(0)=0$, the function
$\nu^{-1}\mathcal R_K*\varphi'$ belongs to $L^2([0,T])$ because
$\mathcal R_K$ is finite and $\varphi'$ is continuous. Moreover,
\[
\nu K*\bigl(\nu^{-1}\mathcal R_K*\varphi'\bigr)
=K*(\mathcal R_K*\varphi')
=(K*\mathcal R_K)*\varphi'
=1*\varphi'
=\varphi.
\]
Hence $C^1_0([0,T])$ is contained in the Cameron--Martin space. Since
$C^1_0([0,T])$ is dense in $C_0([0,T])$, we obtain
$\supp\mu=C_0([0,T])$. This proves
\myref{item:volterra-cm-space}.
\end{proof}

\begin{proof}[Proof of Lemma \ref{lem:borell}]
We set
\[
A_\alpha:=\{g\in C_0([0,T]) ~|~\|g\|_\infty\le\alpha\}.
\]
By \eqref{eq:support-C0}, we have $0 \in C_0([0,T])=\operatorname{supp} \mu$. Hence, we deduce $\mu(A_\alpha)>0$. If $\mu(A_\alpha)=1$, the
claim is immediate from the assumption with $f=0$ and $R \in (C,\infty)$. We may therefore assume $\mu(A_\alpha)\in(0,1)$. By \myref{item:volterra-cm-space} of Lemma
\ref{lem:volterra-gaussian-cm},
\[
\left\{\nu K*f:\ \|f\|_{L^2([0,T])}\le r\right\}
=
r\mathbb B_{\cH_G}:=\{ g  \in  \cH_G ~|~ \|g\|_{\cH_G} \le r\}.
\]
Borell's Gaussian isoperimetric inequality, see
\citet[Theorem 4.3]{Ledoux1996}, gives
\begin{align}
\label{eq:isoperimetric}
\mu_*(A_\alpha+r\mathbb B_{\cH_G})
\ge
\Phi\left(\Phi^{-1}(\mu(A_\alpha))+r\right),
\end{align}
where $\mu_*$ denotes the inner measure associated with $\mu$, and
$\Phi$ denotes the standard normal distribution function. With $a_\alpha:=\Phi^{-1}(\mu(A_\alpha))$,
for every $\delta\in(0,1/2)$, there exists a constant
$C_\delta<\infty$ such that, for all $r\ge0$,
\begin{align}
\label{eq:gaussian_bound}
1-\Phi(a_\alpha+r)
\le
C_\delta
\exp\left(-\left(\frac12-\delta\right)r^2\right).
\end{align}
Indeed, when $a_\alpha+r\ge0$, the standard bound
$1-\Phi(x)\le \exp(-x^2/2)$, $x\ge0$, yields
\[
1-\Phi(a_\alpha+r)
\le
\exp\left(-\frac{(a_\alpha+r)^2}{2}\right).
\]
Moreover, by Young's inequality,
\[
\frac{(a_\alpha+r)^2}{2}
\ge
\left(\frac12-\delta\right)r^2-\frac{a_\alpha^2}{4\delta}.
\]
Thus the desired estimate follows in this case. If $a_\alpha+r<0$, then
$r$ ranges over a bounded interval, and the same estimate follows after
increasing the constant $C_\delta$.
For $R>C$, the assumed Cameron--Martin estimate gives
\[
A_\alpha+\left(\frac{R-C}{B}\right)^{1/2}\mathbb B_{\cH_G}
\subset
\{\Psi\le R\}.
\]
Since $\{\Psi\le R\}$ is measurable, Borell's inequality \eqref{eq:isoperimetric} and the preceding
inclusion yield
\[
\mu(\Psi\le R)
\ge
\mu_*\left(
A_\alpha+\left(\frac{R-C}{B}\right)^{1/2}\mathbb B_{\cH_G}
\right)
\ge
\Phi\left(a_\alpha+\left(\frac{R-C}{B}\right)^{1/2}\right).
\]
Therefore, \eqref{eq:gaussian_bound} yields
\[
\mu(\Psi>R)
\le
1-\Phi\left(a_\alpha+\left(\frac{R-C}{B}\right)^{1/2}\right)
\le
C_\delta
\exp\left(-\left(\frac12-\delta\right)\frac{R-C}{B}\right).
\]
Up to changing the constant, this gives the stated estimate for all $R\ge0$.
\end{proof}

\subsection{Proof of Lemma \ref{lem:well-posed-general}}
\label{subsec:proof-cm-estimate}

The key point is that the
Volterra chain rule used below is a deterministic one, well known in the
Volterra literature. It has no direct counterpart obtained by replacing $\dd s$ with a Brownian increment $\dd B_s$, which
is the stochastic input of interest here. We therefore first work along
Cameron--Martin directions: the Brownian Volterra input is replaced by a
deterministic Cameron--Martin shift, the deterministic chain rule is applied in
that finite-energy setting, and the passage back to the Gaussian input is made
later through Borell's inequality. The proof of Lemma \ref{lem:well-posed-general} relies on the following two
lemmas, which are proved afterwards.

\begin{lemma}\label{lem:chain-rule}
Assumption \ref{ass:kernel} is in force. Let $h\in L^2([0,T])$, set
$X=K*h$, and let $F\in C^1(\R)$ be convex. Define
\[
A:=\mathcal R_K*(F(X)-F(0)).
\]
Then, for all $0\le s\le t\le T$,
\begin{equation}\label{eq:chain-rule-increments}
A(t)-A(s)
\le
\int_s^t F'(X_u)h_u\dd u.
\end{equation}
Equivalently, in the sense of Stieltjes measures,
\begin{equation}\label{eq:chain-rule-stieltjes}
\dd A(t)\le F'(X_t)h_t\dd t.
\end{equation}
\end{lemma}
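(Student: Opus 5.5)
The plan is to prove \eqref{eq:chain-rule-increments} first for smooth inputs $h$, where it reduces to a pointwise differential inequality, and then pass to general $h\in L^2([0,T])$ by approximation.

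\textbf{Step 1: regularization.} Take $h^n\in C^\infty([0,T])$ with $h^n\to h$ in $L^2([0,T])$, and set $X^n:=K*h^n$. By the estimate in the proof of \myref{item:volterra-deterministic-continuity} of Lemma \ref{lem:volterra-gaussian-cm}, $X^n\to X$ uniformly on $[0,T]$, with $X^n_0=X_0=0$.

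Since $F$ is $C^1$, the functions $F(X^n)-F(0)$ converge uniformly to $F(X)-F(0)$. Because $\mathcal R_K$ is a finite measure, the corresponding $A^n:=\mathcal R_K*(F(X^n)-F(0))$ converge pointwise to $A$. Moreover $F'(X^n)\to F'(X)$ uniformly, so
\[
\int_s^tF'(X^n_u)h^n_u\dd u\longrightarrow\int_s^tF'(X_u)h_u\dd u .
\]
It therefore suffices to prove \eqref{eq:chain-rule-increments} for smooth $h$. In that case $X=K*h$ is $C^1$ on $(0,T]$; if the kernel singularity makes this awkward, I would also mollify in time or use $X'=K(\cdot)h_0+K*h'$.

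\textbf{Step 2: the pointwise inequality for smooth $h$.} Put $G:=F(X)-F(0)$, so $G_0=0$ and $G'=F'(X)X'$. Differentiating the convolution (using $G_0=X_0=0$) gives
\[
A'(t)=q_0G'(t)+\int_0^tq_K(t-u)G'(u)\dd u .
\]
Convolving $X=K*h$ with $\mathcal R_K$ and using \eqref{eq:resolvent-identity} gives $\mathcal R_K*X=1*h$. Differentiating the same way yields
\[
h_t=q_0X'_t+\int_0^tq_K(t-u)X'_u\dd u .
\]
Hence
\[
A'(t)-F'(X_t)h_t=\int_0^tq_K(t-u)\bigl(F'(X_u)-F'(X_t)\bigr)X'_u\dd u ,
\]
and the claim reduces to showing that this integral is $\le0$.

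Fix $t$ and set
\[
\psi(u):=F(X_u)-F(X_t)-F'(X_t)(X_u-X_t).
\]
By convexity, $\psi\ge0$; moreover $\psi(t)=0$ and $\psi'(u)=(F'(X_u)-F'(X_t))X'_u$. The integral above is therefore $\int_0^tq_K(t-u)\psi'(u)\dd u$. Integrating by parts, it equals
\[
\Bigl[q_K(t-u)\psi(u)\Bigr]_{u=0}^{u=t}-\int_0^t\psi(u)\,\dd_u\bigl(q_K(t-u)\bigr).
\]
The three pieces are handled as follows.
\begin{itemize}
\item Since $q_K$ is non-increasing, $u\mapsto q_K(t-u)$ is non-decreasing, so its Stieltjes measure is positive and the integral term is $\le0$.
\item The lower boundary term is $-q_K(t)\psi(0)\le0$.
\item At the upper endpoint, $\psi(u)=o(|X_u-X_t|)=o(t-u)$ because $F\in C^1$ and $X\in C^1$. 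Since $q_K$ is non-increasing and integrable, $rq_K(r)\to0$ as $r\downarrow0$. Hence $q_K(t-u)\psi(u)\to0$ as $u\uparrow t$, and this term vanishes.
\end{itemize}
I would carry out the integration by parts on $[0,t-\varepsilon]$ and let $\varepsilon\downarrow0$, which handles a possibly infinite $q_K(0+)$ by monotone limits. This gives $A'(t)\le F'(X_t)h_t$, and integrating over $[s,t]$ gives \eqref{eq:chain-rule-increments}. The Stieltjes form \eqref{eq:chain-rule-stieltjes} is a restatement of the increment inequality.

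\textbf{Main obstacle.} The delicate point is justifying the differentiation formulas for $A$ and for $\mathcal R_K*X$ when $K$ and $q_K$ may be singular at $0$. In particular, $X$ need not be $C^1$ up to $t=0$ even for smooth $h$. I would handle this by working on $[\eta,T]$ and using the integrated identity $\mathcal R_K*X=1*h$ directly. If needed, I would replace $K$ by a smooth truncation of $q_K$ (keeping it non-increasing) and pass to the limit, since every inequality above is stable under such limits.
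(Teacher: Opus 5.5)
Your argument is correct and is essentially the paper's. The paper proves the same pointwise inequality for $U\in C^1([0,T])$ with $U_0=0$: the $q_0$ terms cancel, and the convexity gap $\psi$ is integrated against the positive Stieltjes measure $-\dd q_K$, plus the term $q_K(t)\psi(0)$. It then approximates through the causal mollification $X^\varepsilon=\zeta_\varepsilon*X=K*(\zeta_\varepsilon*h)$, which is your Step 1 with the specific choice $h^n=\zeta_\varepsilon*h$.

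That choice is worth adopting explicitly, because the smooth approximants should vanish near $0$. Then $(X^n)'=K*(h^n)'$ is continuous on $[0,T]$ by Lemma \ref{lem:volterra-gaussian-cm}, so $X^n\in C^1([0,T])$. If instead $h_0\neq 0$, one only has $X'=h_0K+K*h'$, and Assumption \ref{ass:kernel} does not assume $K$ continuous on $(0,T]$. In that case the Lipschitz bound you use to make $q_K(t-u)\psi(u)$ vanish as $u\uparrow t$ is not automatic.
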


\begin{remark}\label{rem:chain-rule-classical-case}
If $K\equiv 1$, then $\mathcal{R}_K=\delta_0$ and we retrieve the usual chain rule
\[
\int_0^\tau F'(X_t)h_t\dd t
=
\int_0^\tau F'(X_t)\dot X_t\dd t
=
F(X_\tau)-F(0).
\]
\end{remark}

This is the standard Volterra convexity inequality associated with positive
first-kind resolvents; see, for instance,
\citet[Chapter 18]{GripenbergLondenStaffans1990}.

\begin{lemma}
\label{lem:volterra-energy-small-shift}
Assumptions \ref{ass:kernel} and \ref{ass:volatility} are in force. For every
$\varepsilon>0$, there exist $\alpha_\varepsilon>0$ and
$C_\varepsilon<\infty$ such that, for every $g\in C([0,T])$ with
$\|g\|_\infty\le\alpha_\varepsilon$ and every $u\in L^2([0,T])$, if
\[
\forall t \in [0,T], \quad y(t)=y_0+g_t+(K*u)(t),
\quad
q(t)=\sigma(t,y(t)),
\]
then
\begin{equation}\label{eq:volterra-energy-small-shift}
\int_0^T q(t)u(t)\dd t
\ge
-C_\varepsilon
-\varepsilon\int_0^T q(t)^2\dd t
-\varepsilon\int_0^T u(t)^2\dd t.
\end{equation}
\end{lemma}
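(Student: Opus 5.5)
We need to prove Lemma \ref{lem:volterra-energy-small-shift}: a lower bound on $\int_0^T q u\,\dd t$ when $y=y_0+g+K*u$ with $g$ uniformly small. The plan is to combine the chain rule of Lemma \ref{lem:chain-rule}, applied with $F=\mathcal S$ and $h=u$, with the decomposition $\sigma=m s+r_\sigma$ from Assumption \ref{ass:volatility}. The small shift $g$ is treated as a perturbation, so that the coercive structure survives.

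First, split the target integral as
\[
\int_0^T q u \,\dd t=\int_0^T m(t)s(y(t))u(t)\dd t+\int_0^T r_\sigma(t,y(t))u(t)\dd t .
\]
The remainder term is handled by Young's inequality and \eqref{eq:r-negligible}:
\[
\Bigl|\int r_\sigma u\Bigr|\le \tfrac{\varepsilon}{4}\int u^2+\tfrac1\varepsilon\int r_\sigma^2 .
\]
Moreover $r_\sigma^2\le 2\delta^2 s(y)^2+2C_\delta^2$. Since $q=ms+r_\sigma$ with $m\ge m_-$, one has $s(y)^2\le C(q^2+r_\sigma^2)$, and so for $\delta$ small $s(y)^2\le C q^2+C'_\delta$. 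Choosing $\delta=\delta(\varepsilon)$ small absorbs everything into $\varepsilon\int q^2+\varepsilon\int u^2+C_\varepsilon$.

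For the main term, write $s(y)=s(y_0+X)+[s(y_0+X+g)-s(y_0+X)]$ with $X=K*u$. The difference is bounded via \eqref{eq:s-derivative-growth}: Grönwall on $s'\le C_s(1+|s|)$ gives $|s(x+g)-s(x)|\le (e^{C_s|g|}-1)(1+|s(x)|)$ roughly. It is therefore at most $c(\alpha)(1+|s(y)|)$ with $c(\alpha)\to0$ as $\alpha\to0$. Its product with $mu$ is absorbed by Young exactly as the remainder was; this is where $\alpha_\varepsilon$ is chosen.

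It remains to bound $\int_0^T m(t)s(y_0+X_t)u_t\,\dd t$ from below. Apply Lemma \ref{lem:chain-rule} with $F(x)=\mathcal S(y_0+x)$, which is convex and $C^1$ with $F'=s(y_0+\cdot)$. This gives $\dd A\le s(y_0+X_t)u_t\dd t$, where $A=\mathcal R_K*(F(X)-F(0))$. Two cases follow from \eqref{eq:weight-lyapunov-compatibility}.

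If $m$ is non-increasing, integrate $m$ against the measure $\dd A$ by parts:
\[
\int m\, s u\,\dd t\ge \int m\,\dd A=m(T)A(T)-\int A\, m'\,\dd t .
\]
Since $\mathcal R_K\ge0$ and $\mathcal S\ge0$, we have $A\ge-\mathcal R_K([0,T])\mathcal S(y_0)$. Hence $m(T)A(T)\ge -C$, and $-\int A m'\ge -C$ because $-m'\ge0$.

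In the other case, the same integration by parts leaves $-\int A\,(m')_+\dd t$. Here I would bound $A(t)\le \mathcal R_K([0,T])\sup_{u\le t}\mathcal S(y_0+X_u)$. The hope is that $\mathcal S\le\delta s^2+\widetilde C_\delta$ gives control by $\int s^2$. The snag is that this yields a sup rather than an integral of $s^2$. My first attempt is to use the form $\mathcal R_K=q_0\delta_0+q_K\dd t$ with $q_K$ non-increasing: $\int_0^T A(t)\dd t=\int_0^T(\mathcal S(y_0+X)-\mathcal S(y_0))(t)\,\mathcal R_K([0,T-t])\dd t$ by Fubini. This bounds $\int A$ by $\mathcal R_K([0,T])\int_0^T\mathcal S(y_0+X_t)\dd t\le \delta C\int s^2+C$, which is then absorbed as before. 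This Fubini step turns out to avoid the sup issue, so the main obstacle is really the bookkeeping of the perturbation by $g$, where the exponential-growth control of $s$ is essential.
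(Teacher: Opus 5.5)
Your proposal is correct and follows essentially the paper's route: the chain rule of Lemma~\ref{lem:chain-rule} with $F=\mathcal S(y_0+\cdot)$, integration by parts against $m$ with the two cases of \eqref{eq:weight-lyapunov-compatibility}, a Fubini bound by $\mathcal R_K([0,T])\int_0^T\mathcal S(y_0+X_t)\dd t$, Young's inequality for $r_\sigma$, and Gr\"onwall control of $s$ under the small shift $g$. The only differences are cosmetic: you decompose $\sigma$ directly at the shifted point instead of first proving the $g=0$ estimate and then comparing $q$ with $q^0$, and, since $(m')_+$ is a non-constant weight, your Fubini step should be applied to the non-negative majorant $\int_0^t\mathcal S(y_0+X_{t-r})\,\mathcal R_K(\dd r)\ge A(t)$ rather than to $A$ itself.
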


We now provide the proof of Lemma \ref{lem:well-posed-general}.

\begin{proof}[Proof of Lemma \ref{lem:well-posed-general}]
The value of the generic constant $C$ may change from line to line.

\medskip

\noindent\textbf{Proof of \myref{item:feedback-well-posedness} and
\myref{item:feedback-continuity}.}
Local existence, uniqueness, and continuous dependence up to the explosion time
follow from the standard local theory for nonlinear Volterra equations with
$L^1$-kernels and locally Lipschitz nonlinearities; see
\citet[Chapter 12]{GripenbergLondenStaffans1990} and
\citet[Appendix B]{AbiJaberLarssonPulido2019}.

Fix $x\in C([0,T])$ and set $\bar y_0:=y_0+x_0$. Choose $\theta>0$ such
that $c:=\nu a/2-\theta-2\theta\nu^2a^2>0$, and let $\alpha$ be given by
Lemma \ref{lem:volterra-energy-small-shift}, with $y_0$ replaced by
$\bar y_0$. By \eqref{eq:support-C0}, there exists $f\in L^2([0,T])$ such
that
\[
g:=x-x_0-\nu K*f,
\quad
\|g\|_\infty\le\alpha/2.
\]
Let $q(t):=\sigma(t,Y_t^x)$ and let $\tau$ be smaller than the explosion time.
Then, on $[0,\tau]$,
\[
Y^x=\bar y_0+g+K*\bigl(\nu(f-aq)\bigr).
\]
Applying the lemma on $[0,\tau]$ and using Young's inequality give
\[
c\int_0^\tau q(t)^2\dd t
\le
\left(\frac{\nu}{2a}+2\theta\nu^2\right)\|f\|_{L^2([0,T])}^2+C_\theta.
\]
Consequently,
\[
\sup_{t\le\tau}|Y_t^x|
\le
|\bar y_0|+\|g\|_\infty
+\nu\|K\|_{L^2([0,T])}
\left(\|f\|_{L^2([0,T])}+a\|q\|_{L^2(0,\tau)}\right),
\]
uniformly in $\tau$. The continuation criterion therefore excludes explosion,
which proves \myref{item:feedback-well-posedness}.

Now let $x^n\to x$ in $C([0,T])$. The same $f$ satisfies
$\|x^n-x_0^n-\nu K*f\|_\infty\le\alpha$ for all sufficiently large $n$.
Since $x_0^n$ remains in a compact set, the constants above may be chosen
uniformly in $n$. Hence the solutions $Y^{x^n}$ and $Y^x$ remain in a common
compact interval. On this interval, $\sigma$ is
Lipschitz in its second variable, uniformly in time, and therefore
\[
|Y_t^{x^n}-Y_t^x|
\le
\|x^n-x\|_\infty
+\nu aL\int_0^tK(t-s)|Y_s^{x^n}-Y_s^x|\dd s.
\]
The Volterra Grönwall inequality yields
$\|Y^{x^n}-Y^x\|_\infty\to0$, proving
\myref{item:feedback-continuity}.

\medskip

\noindent\textbf{Proof of \myref{item:feedback-coercivity}.}
We fix $B>a^{-2}$. We choose $\theta>0$ small enough so that
\[
\frac{\nu a}{2}-\theta-2\theta\nu^2a^2
>0, \quad
\frac{\frac{\nu}{2a}+2\theta\nu^2}
{\frac{\nu a}{2}-\theta-2\theta\nu^2a^2}
\le
B.
\]
This is possible because the ratio on the left tends to $a^{-2}$ as
$\theta\downarrow0$. We now assume that
$g\in C_0([0,T])$, $\|g\|_\infty\le \alpha_\theta$, and
$f\in L^2([0,T])$, where $\alpha_\theta$ is given by Lemma
\ref{lem:volterra-energy-small-shift}. We set $Y=Y^{g+\nu K*f}$ and
\[
q(t):=\sigma(t,Y_t).
\]
Then
\[
Y=y_0+g+K*\bigl(\nu(f-aq)\bigr).
\]
Applying Lemma \ref{lem:volterra-energy-small-shift} with $g$ and
$u(t)=\nu(f(t)-aq(t))$, we obtain
\[
\nu\int_0^T q(t)f(t)\dd t
-
\nu a\int_0^T q(t)^2\dd t
\ge
-C_\theta
-\theta\int_0^T q(t)^2\dd t
-\theta\nu^2\int_0^T(f(t)-aq(t))^2\dd t .
\]
Therefore, using Young's inequality,
\begin{align*}
\nu a\int_0^T q(t)^2\dd t
&\le
\nu\int_0^T q(t)f(t)\dd t
+
C_\theta
+
\theta\int_0^T q(t)^2\dd t
+
\theta\nu^2\int_0^T(f(t)-aq(t))^2\dd t
\\
&\leq
\left(
\frac{\nu a}{2}
+
\theta
+
2\theta\nu^2a^2
\right)
\int_0^T q(t)^2\dd t
+
\left(
\frac{\nu}{2a}
+
2\theta\nu^2
\right)
\int_0^T f(t)^2\dd t
+
C_\theta.
\end{align*}
Hence,
\[
\left(\frac{\nu a}{2}-\theta-2\theta\nu^2a^2\right)
\int_0^T q(t)^2\dd t
\le
\left(
\frac{\nu}{2a}
+
2\theta\nu^2
\right)
\int_0^T f(t)^2\dd t
+
C_\theta.
\]
By the choice of $\theta$,
\[
\Psi(g+\nu K*f)
=\int_0^T q(t)^2\dd t
\le
B\|f\|_{L^2([0,T])}^2
+
\frac{C_\theta}
{\frac{\nu a}{2}-\theta-2\theta\nu^2a^2}.
\]
\end{proof}

We now provide the proof of Lemma \ref{lem:chain-rule}.

\begin{proof}[Proof of Lemma \ref{lem:chain-rule}]
We choose $q_K$ right-continuous and non-increasing and define its Stieltjes
measure $\mu_K$ by
\[
\mu_K((u,v])=q_K(u)-q_K(v),
\quad 0<u<v\le T.
\]
For every $z\in C^1([0,T])$ with $z_0=0$, Stieltjes integration by parts
gives
\begin{equation}\label{eq:bv-derivative}
\dd t\text{-a.e.},\quad
\frac{\dd}{\dd t}(q_K*z)(t)
=
q_K(t)z_t
+
\int_{(0,t]}(z_t-z_{t-r})\mu_K(\dd r).
\end{equation}
We fix $U\in C^1([0,T])$ with $U_0=0$. Applying
\eqref{eq:bv-derivative} to $U$ and to $F(U)-F(0)$, and using
$\mathcal R_K*U=q_0U+q_K*U$, gives, for almost every $t$,
\[
\begin{aligned}
&F'(U_t)\frac{\dd}{\dd t}(\mathcal R_K*U)(t)
-
\frac{\dd}{\dd t}\bigl(\mathcal R_K*(F(U)-F(0))\bigr)(t) \\
&=q_K(t)\bigl[F'(U_t)U_t-F(U_t)+F(0)\bigr] +
\int_{(0,t]}
\bigl[
F'(U_t)(U_t-U_{t-r})-
F(U_t)+F(U_{t-r})
\bigr]\mu_K(\dd r)
\ge0,
\end{aligned}
\]
where the last inequality follows from convexity of $F$. Hence, in the
sense of Stieltjes measures,
\begin{equation}\label{eq:smooth-chain-measure}
\dd\bigl(\mathcal R_K*(F(U)-F(0))\bigr)(t)
\le
F'(U_t)\frac{\dd}{\dd t}(\mathcal R_K*U)(t)\dd t.
\end{equation}
Equivalently, for all $0\le s\le t\le T$,
\begin{equation}\label{eq:smooth-chain-increments}
\bigl(\mathcal R_K*(F(U)-F(0))\bigr)(t)
-
\bigl(\mathcal R_K*(F(U)-F(0))\bigr)(s)
\le
\int_s^t
F'(U_u)\frac{\dd}{\dd u}(\mathcal R_K*U)(u)\dd u.
\end{equation}
It remains to pass from smooth paths to $X=K*h$. We extend all functions by
zero to $(-\infty,0)$ and choose a non-negative
$\zeta\in C_c^\infty(0,1)$ with $\int_0^1\zeta(s)\dd s=1$. We set
$\zeta_\varepsilon(t)=\varepsilon^{-1}\zeta(t/\varepsilon)$ and define
$X^\varepsilon=\zeta_\varepsilon*X$. Then
$X^\varepsilon\in C^1([0,T])$, $X^\varepsilon_0=0$, and
$X^\varepsilon\to X$ uniformly on $[0,T]$. Moreover, since
$X=K*h$ and $\mathcal R_K*K=1$, associativity gives
\[
\mathcal R_K*X^\varepsilon
=
\zeta_\varepsilon*(\mathcal R_K*X)
=
\zeta_\varepsilon*(1*h),
\quad
\frac{\dd}{\dd t}(\mathcal R_K*X^\varepsilon)(t)
=(\zeta_\varepsilon*h)(t).
\]
Applying \eqref{eq:smooth-chain-increments} to $U=X^\varepsilon$, we obtain,
for all $0\le s\le t\le T$,
\[
\begin{aligned}
&\bigl(\mathcal R_K*(F(X^\varepsilon)-F(0))\bigr)(t)
-
\bigl(\mathcal R_K*(F(X^\varepsilon)-F(0))\bigr)(s) \le
\int_s^t
F'(X^\varepsilon_u)(\zeta_\varepsilon*h)(u)\dd u.
\end{aligned}
\]
Letting $\varepsilon\downarrow0$, using
$\zeta_\varepsilon*h\to h$ in $L^2([0,T])$, the uniform convergence of
$X^\varepsilon$, the continuity of $F'$ on bounded sets, and the finite
mass of $\mathcal R_K$, gives
\[
A(t)-A(s)
\le
\int_s^t F'(X_u)h_u\dd u,
\quad 0\le s\le t\le T,
\]
where $A$ is the function defined in the statement. This proves \eqref{eq:chain-rule-increments}. Equivalently, the function
$t\longmapsto \int_0^t F'(X_u)h_u\dd u-A(t)$
is non-decreasing, which is exactly
\[
\dd A(t)\le F'(X_t)h_t\dd t
\]
in the sense of Stieltjes measures. This proves
\eqref{eq:chain-rule-stieltjes}.
\end{proof}

We now provide the proof of Lemma \ref{lem:volterra-energy-small-shift}.

\begin{proof}[Proof of Lemma \ref{lem:volterra-energy-small-shift}]
Throughout the proof, constants may change from line to line. We first record
two elementary consequences of Assumption \ref{ass:volatility}. Taking
$\delta=m_-/2$ in \eqref{eq:r-negligible}, we get
\begin{equation}\label{eq:s-q-comparison-small-shift}
|s(x)|^2+|m(t)s(x)|^2
\le C\bigl(1+|\sigma(t,x)|^2\bigr),
\quad (t,x)\in[0,T]\times\R.
\end{equation}
Moreover, \eqref{eq:s-derivative-growth} and Grönwall's lemma imply that, for
$|h|\le1$,
\begin{equation}\label{eq:s-shift-comparison-small-shift}
|s(x+h)|\le C(1+|s(x)|),
\quad
|s(x+h)-s(x)|
\le C|h|\bigl(1+|s(x+h)|+|s(x)|\bigr).
\end{equation}
Combining this with \eqref{eq:s-q-comparison-small-shift} gives, again for
$|h|\le1$,
\begin{equation}\label{eq:q-shift-comparison-small-shift}
|\sigma(t,x)|\le C\bigl(1+|\sigma(t,x+h)|\bigr).
\end{equation}

\medskip

\noindent\textbf{Step 1: the estimate without shift.}
We first isolate the case $g=0$, where the Volterra chain-rule applies
directly to $K*u$ and yields the desired energy estimate. The general case is
then obtained by stability, since a small continuous shift $g$ changes
$\sigma(t,y^0)$ only by an error that will be absorbed by Young's inequality.
We set
\[
y^0(t)=y_0+(K*u)(t).
\]
Applying Lemma \ref{lem:chain-rule} to $K*u$ and to
the convex function $x\mapsto\mathcal S(y_0+x)$, we have
\[
\dd A(t)\le s(y^0(t))u(t)\dd t,
\quad
A:=\mathcal R_K*(\mathcal S(y^0)-\mathcal S(y_0)).
\]
Multiplying by the positive weight $m$ gives
\[
\int_0^T m(t)s(y^0(t))u(t)\dd t
\ge
\int_0^T m(t)\dd A(t).
\]
Since $A(0)=0$, Stieltjes integration by parts gives
\[
\int_0^T m(t)\dd A(t)
=
m(T)A(T)-\int_0^T A(t)m'(t)\dd t.
\]
Since $\mathcal R_K$ is positive and $\mathcal S\ge0$, for every
$t\in[0,T]$,
\[
-\mathcal S(y_0)\mathcal R_K([0,T])
\le
A(t)
\le
\int_0^t \mathcal S(y^0(t-r))\mathcal R_K(\dd r).
\]
In particular,
\[
m(T)A(T)\ge -m_+\mathcal S(y_0)\mathcal R_K([0,T]).
\]
Writing $(m')_\pm(t):=\max\{\pm m'(t),0\}$, we obtain
\[
\begin{aligned}
-\int_0^T A(t)m'(t)\dd t
&=
-\int_0^T A(t)(m')_+(t)\dd t
+
\int_0^T A(t)(m')_-(t)\dd t \\
&\ge
-\int_0^T (m')_+(t)
\int_0^t \mathcal S(y^0(t-r))\mathcal R_K(\dd r)\dd t
-\mathcal S(y_0)\mathcal R_K([0,T])
\int_0^T(m')_-(t)\dd t.
\end{aligned}
\]
Hence, by Fubini's theorem and the finiteness of $\mathcal R_K$,
\[
\int_0^T m(t)\dd A(t)
\ge
-C
-
\left\|(m')_+\right\|_{L^\infty(0,T)}\mathcal R_K([0,T])
\int_0^T\mathcal S(y^0(t))\dd t.
\]
If $m$ is non-increasing, then $(m')_+=0$ and the last integral disappears.
Otherwise, the second alternative in
\eqref{eq:weight-lyapunov-compatibility} applies, and, for every $\delta>0$,
\[
\begin{aligned}
\int_0^T\mathcal S(y^0(t))\dd t
\le
\delta\int_0^T s(y^0(t))^2\dd t+TC_\delta
\le
\delta m_-^{-2}\int_0^T \bigl[m(t)s(y^0(t))\bigr]^2\dd t+TC_\delta,
\end{aligned}
\]
since $m(t)\ge m_-$. Thus, in both cases,
choosing $\delta$ sufficiently small when needed and absorbing the remaining
terms into a constant $C_\eta$, we obtain, for every $\eta>0$,
\[
\int_0^T m(t)\dd A(t)
\ge
-C_\eta
-\eta\int_0^T \bigl[m(t)s(y^0(t))\bigr]^2\dd t.
\]
The additional term $-\eta\int_0^T u(t)^2\dd t$ can only decrease the
right-hand side. Consequently, for every $\eta>0$,
\begin{equation}\label{eq:principal-estimate-small-shift}
\int_0^T m(t)s(y^0(t))u(t)\dd t
\ge
-C_\eta
-\eta\int_0^T \bigl[m(t)s(y^0(t))\bigr]^2\dd t
-\eta\int_0^T u(t)^2\dd t.
\end{equation}
Since
\[
q^0(t)=\sigma(t,y^0(t))=m(t)s(y^0(t))+r_\sigma(t,y^0(t)),
\]
we write
\[
\int_0^T q^0(t)u(t)\dd t
=
\int_0^T m(t)s(y^0(t))u(t)\dd t
+
\int_0^T r_\sigma(t,y^0(t))u(t)\dd t.
\]
Using \eqref{eq:r-negligible}, \eqref{eq:s-q-comparison-small-shift}, and
Young's inequality, choosing the parameter in \eqref{eq:r-negligible} small
enough gives
\[
\left|
\int_0^T r_\sigma(t,y^0(t))u(t)\dd t
\right|
\le
\eta\int_0^T q^0(t)^2\dd t
+
\eta\int_0^T u(t)^2\dd t
+
C_\eta.
\]
Combining this estimate with \eqref{eq:principal-estimate-small-shift}, and
using again \eqref{eq:s-q-comparison-small-shift} to control
$\bigl[m(t)s(y^0(t))\bigr]^2$ in terms of $1+q^0(t)^2$, we obtain, after renaming $\eta$,
\begin{equation}\label{eq:no-shift-estimate-small-shift}
\int_0^T q^0(t)u(t)\dd t
\ge
-C_\eta
-\eta\int_0^T q^0(t)^2\dd t
-\eta\int_0^T u(t)^2\dd t.
\end{equation}

\medskip

\noindent\textbf{Step 2: stability under a small continuous shift.}
We now allow a small perturbation $g$. We set
\[
y(t):=y^0(t)+g_t,
\quad
q(t):=\sigma(t,y(t)).
\]
We assume $\|g\|_\infty\le\alpha_\varepsilon\le1$, where
$\alpha_\varepsilon$ will be chosen at the end. Applying \eqref{eq:no-shift-estimate-small-shift} to $q^0$, and using
\eqref{eq:q-shift-comparison-small-shift} in $q(t)$ with $h=g$ and $x=y^0$, we get, for every $\eta>0$,
\begin{equation}\label{eq:shift-base-estimate-small-shift}
\int_0^T q^0(t)u(t)\dd t
\ge
-C_\eta
-\eta\int_0^T q(t)^2\dd t
-\eta\int_0^T u(t)^2\dd t,
\end{equation}
after decreasing $\eta$ if necessary. It remains to estimate $q-q^0$. By the decomposition of $\sigma$,
\[
q(t)-q^0(t)
=
m(t)\bigl(s(y(t))-s(y^0(t))\bigr)
+
r_\sigma(t,y(t))-r_\sigma(t,y^0(t)).
\]
Using \eqref{eq:s-shift-comparison-small-shift},
\eqref{eq:s-q-comparison-small-shift}, \eqref{eq:q-shift-comparison-small-shift}
and Young's inequality, we obtain
\begin{equation}\label{eq:principal-shift-error-small-shift}
\left|
\int_0^T
m(t)\bigl(s(y(t))-s(y^0(t))\bigr)u(t)\dd t
\right|
\le
\eta\int_0^T u(t)^2\dd t
+
C_\eta\alpha_\varepsilon^2\int_0^T q(t)^2\dd t
+
C_\eta.
\end{equation}
Similarly, with $\theta>0$ being the parameter in
\eqref{eq:r-negligible}, by Young's inequality, we get
\[
\left|
\int_0^T
\bigl(r_\sigma(t,y(t))-r_\sigma(t,y^0(t))\bigr)u(t)\dd t
\right|
\le
\eta\int_0^T u(t)^2\dd t
+
C_\eta\int_0^T
\bigl|r_\sigma(t,y(t))-r_\sigma(t,y^0(t))\bigr|^2\dd t .
\]
Using \eqref{eq:r-negligible}, we get
\[
\bigl|r_\sigma(t,y(t))-r_\sigma(t,y^0(t))\bigr|^2
\le
C\theta^2\bigl(|s(y(t))|^2+|s(y^0(t))|^2\bigr)
+
C_\theta .
\]
By \eqref{eq:s-q-comparison-small-shift} and
\eqref{eq:q-shift-comparison-small-shift}, since $\|g\|_\infty\le1$,
\[
|s(y(t))|^2+|s(y^0(t))|^2
\le
C\bigl(1+q(t)^2\bigr).
\]
Therefore,
\[
\left|
\int_0^T
\bigl(r_\sigma(t,y(t))-r_\sigma(t,y^0(t))\bigr)u(t)\dd t
\right|
\le
\eta\int_0^T u(t)^2\dd t
+
C_\eta\theta^2\int_0^T q(t)^2\dd t
+
C_{\eta,\theta}.
\]
Choosing $\theta>0$ small enough so that $C_\eta\theta^2\le\eta$, and
renaming the constant, we obtain
\begin{equation}\label{eq:remainder-shift-error-small-shift}
\left|
\int_0^T
\bigl(r_\sigma(t,y(t))-r_\sigma(t,y^0(t))\bigr)u(t)\dd t
\right|
\le
\eta\int_0^T q(t)^2\dd t
+
\eta\int_0^T u(t)^2\dd t
+
C_\eta.
\end{equation}
Combining \eqref{eq:shift-base-estimate-small-shift},
\eqref{eq:principal-shift-error-small-shift}, and
\eqref{eq:remainder-shift-error-small-shift}, we obtain
\[
\int_0^T q(t)u(t)\dd t
\ge
-C_\eta
-\bigl(C\eta+C_\eta\alpha_\varepsilon^2\bigr)\int_0^T q(t)^2\dd t
-C\eta\int_0^T u(t)^2\dd t.
\]
We first choose $\eta>0$ small enough and then choose
$\alpha_\varepsilon>0$ small enough so that
\[
C\eta+C_\eta\alpha_\varepsilon^2\le\varepsilon,
\quad
C\eta\le\varepsilon.
\]
The desired estimate follows after increasing the constant and denoting it by
$C_\varepsilon$.
\end{proof}

\section*{Acknowledgements}
The authors are grateful to Paul Gassiat for his presentation of his work on
the rough Bergomi model, which motivated this research. They also thank him for
his insightful remarks and suggestions, which significantly contributed to
improving this work. The authors also acknowledge the use of AI-assisted tools during the preparation
of this manuscript. 

\bibliographystyle{plainnat}
\bibliography{fusion_corrige}
\end{document}